\newcommand{\R}{{\mathbb R}}
\newcommand{\ren}{\R^{N}}
\renewcommand{\l }{\lambda }
\newcommand{\p}{\partial}
\renewcommand{\O }{\Omega }
\newcommand{\inn}{\text{  in   }}
\newcommand{\dyle}{\displaystyle}
\newcommand{\dint}{\dyle\int}
\newcommand{\e }{\varepsilon}
\long\def\salta#1{\relax}
\newcommand{\re}{{I\!\!R}}
\newcommand{\irn}{\int_{\re^N}}
\newcommand{\io}{\int\limits_\O}
\newcommand{\Div}{\text{div\,}}
\renewcommand{\a }{\alpha }
\renewcommand{\b }{\beta }
\newcommand{\D }{\Delta }
\newcommand{\g }{\gamma}
\newcommand{\G }{\Gamma }
\renewcommand{\l }{\lambda }
\renewcommand{\L }{\Lambda }
\newcommand{\n }{\nabla }
\newcommand{\s }{\sigma }
\renewcommand{\O }{\Omega }
\renewcommand{\ge }{\geqslant}
\renewcommand{\geq }{\geqslant}
\renewcommand{\le }{\leqslant}
\renewcommand{\leq }{\leqslant}
\newtheorem{Theorem}{Theorem}[section]
\newtheorem{Lemma}[Theorem]{Lemma}
\theoremstyle{definition} 
\newtheorem{remark}[Theorem]{Remark}
\newcommand{\cqd}{{\unskip\nobreak\hfil\penalty50
        \hskip2em\hbox{}\nobreak\hfil\mbox{\rule{1ex}{1ex} \qquad}
        \parfillskip=0pt \finalhyphendemerits=0\par\medskip}}
\begin{document}

\title[]{Caffarelli-Kohn-Nirenberg type inequalities of fractional order with applications}

\author[B. Abdellaoui, R. Bentifour]{B. Abdellaoui, R. Bentifour}

\thanks{This work is partially supported by project
MTM2013-40846-P, MINECO, Spain. The first author is partially
supported by a grant form the ICTP centre of Trieste, Italy.}
\date{}

\thanks{2010 {\it Mathematics Subject Classification:49J35, 35A15, 35S15. }   \\
   \indent {\it Keywords: Fractional Sobolev spaces, weighted Hardy inequality, Nonlocal problems. }  }

\address{Boumediene Abdellaoui, Rachid Bentifour
\hfill\break\indent Laboratoire d'Analyse Nonlin\'eaire et
Math\'ematiques Appliqu\'ees. \hfill \break\indent D\'epartement
de Math\'ematiques, Universit\'e Abou Bakr Belka\"{\i}d, Tlemcen,
\hfill\break\indent Tlemcen 13000, Algeria.} \email{{\tt
boumediene.abdellaoui@uam.es, rachidbentifour@gmail.com}}

 \begin{abstract} Let $0<s<1$ and $p>1$ be such that $ps<N$. Assume that $\Omega$ is a bounded domain containing the origin.
Starting from the ground state inequality by R. Frank and R.
Seiringer in \cite{FS} to obtain:
 \begin{enumerate}
 \item The following improved  Hardy inequality for  $p\ge 2$:

For all $q<p$, there exists a positive constant $C\equiv C(\Omega,
q, N, s)$ such that
 $$
\dint_{\re^N}\dint_{\re^N} \,
\dfrac{|u(x)-u(y)|^{p}}{|x-y|^{N+ps}}\,dx\,dy - \Lambda_{N,p,s}
\dint_{\re^N} \dfrac{|u(x)|^p}{|x|^{p}}\,dx\geq C
\dint_{\Omega}\dint_{\Omega}\dfrac{|u(x)-u(y)|^p}{|x-y|^{N+qs}}dxdy,
$$
for all $u \in \mathcal{C}_0^\infty(\ren)$. Here $\Lambda_{N,p,s}$
is the optimal constant in the Hardy inequality \eqref{hardy}.
\item Define $p^*_{s}=\frac{pN}{N-ps}$ and let
$\beta<\frac{N-ps}{2}$, then
\begin{equation*}
\dint\limits_{\re^N}\dint\limits_{\re^N}
\dfrac{|u(x)-u(y)|^p}{|x-y|^{N+ps}|x|^{\b}|y|^{\b}} \,dy\,dx\ge
S(N,p,s,\beta)\Big(\dint\limits_{\ren}
\dfrac{|u(x)|^{p^*_{s}}}{|x|^{2\beta\frac{p^*_s}{p}}}\,dx\Big)^{\frac{p}{p^*_{s}}},
\end{equation*} for all $u\in \mathcal{C}^\infty_0(\O)$ where
$S\equiv S(N,p,s,\beta)>0$. \item If $\beta\equiv \frac{N-ps}{2}$,
as a consequence of the improved Hardy inequality, we obtain that
for all $q<p$, there exists a positive constant $C(\O)$ such that
\begin{equation*}
\dint\limits_{\re^N}\dint\limits_{\re^N}
\dfrac{|u(x)-u(y)|^p}{|x-y|^{N+ps}|x|^{\b}|y|^{\b}} \,dy\,dx\ge
C(\O)\Big(\dint\limits_{\O} \dfrac{|u(x)|^{p^*_{s,q}}}{|x|^{2\b
\frac{p^*_{s,q}}{p}}}\,dx\Big)^{\frac{p}{p^*_{s,q}}},
\end{equation*} for all $u\in \mathcal{C}^\infty_0(\O)$ where $p^*_{s,q}=\frac{pN}{N-qs}$.
\end{enumerate}
Notice that the previous inequalities can be understood as the
fractional extension of the Callarelli-Kohn-Nirenberg inequalities
in \cite{CKN}.
 \end{abstract}
\maketitle
\section{Introduction}

In \cite{CKN} the authors proved the following result
\begin{Theorem}[Caffarelli-Kohn-Nirenberg]\label{th:CKN}
Let $p,q,r,\a,\b,\s$ and $a$ be real constants such that $p,q\ge
1,\,r>0,\, 0\le a\le 1$, and $$
\frac1p+\frac{\a}{N},\,\frac1q+\frac{\b}{N},\,\frac1r+\frac{m}{N}>0,$$
where $m=a\s+(1-a)\b$. Then there exists a positive constant $C$
such that for all $u\in \mathcal{C}^{\infty}_0(\ren)$ we have $$
\Big|\Big||x|^{m}u\Big|\Big|_{L^r(\ren)}\le C\Big|\Big||x|^{\a}|\n
u|\Big|\Big|^a_{L^p(\ren)}\Big|\Big||x|^{\b}u\Big|\Big|^{1-a}_{L^q(\ren)},$$
if and only if the following relations hold: $$
\frac{1}{r}+\frac{m}{N}=a\Big(\frac{1}{p}+\frac{\a-1}{N}\Big)+(1-a)
\Big(\frac{1}{q}+\frac{\b}{N}\Big),$$ with $$0\le \a-\s  \mbox{
if }a>0, $$ and $$\a-\s\le 1 \mbox{  if }a>0 \mbox{ and  }
\frac{1}{r}+\frac{m}{N}=\frac{1}{p}+\frac{\a-1}{N}. $$
\end{Theorem}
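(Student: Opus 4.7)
The plan is to split the proof into a necessity part (establishing that the scaling and range conditions must hold) and a sufficiency part (deriving the inequality from its endpoint cases $a=0$ and $a=1$ via interpolation). The case $a=0$ is trivial because the desired estimate reduces to a pointwise comparison of two weighted $L^q$ norms on the same function, while the interesting analytic content is concentrated at $a=1$, i.e. in the weighted Sobolev inequality
$$
\bigl\||x|^{m}u\bigr\|_{L^r(\ren)}\le C\,\bigl\||x|^{\a}\n u\bigr\|_{L^p(\ren)}, \qquad \tfrac{1}{r}+\tfrac{m}{N}=\tfrac{1}{p}+\tfrac{\a-1}{N}.
$$

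For necessity I would test the inequality against the dilations $u_\lambda(x)=u(\lambda x)$ for $\lambda>0$. Computing both sides and demanding that the power of $\lambda$ cancel yields exactly the scaling identity claimed. To obtain $0\le \a-\s$, I would insert sequences of test functions supported on annuli $\{|x|\sim R\}$ with $R\to 0$ or $R\to\infty$ and see which side blows up: concentration near the origin forces $\s\le \a$. The endpoint condition $\a-\s\le 1$ in the critical case $\tfrac{1}{r}+\tfrac{m}{N}=\tfrac{1}{p}+\tfrac{\a-1}{N}$ is obtained analogously by rescaling a fixed bump and exploiting that the scaling exponents on both sides already agree, so a strict inequality $\a-\s>1$ would contradict boundedness on a translating family of bumps.

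For sufficiency I would first establish the case $a=1$. A convenient route is the pointwise representation $u(x)=-c_N\int \frac{x-y}{|x-y|^N}\cdot \n u(y)\,dy$ followed by weighted $L^p$ bounds for the Riesz-type operator $\n u\mapsto |x|^{m}\,I_1(|y|^{-\a}\cdot)$; these bounds follow from Stein--Weiss weighted inequalities under the stated compatibility relation. Equivalently one can run a radial rearrangement plus a one-dimensional Hardy inequality argument, then remove the radiality assumption by pointwise symmetrization estimates. Once the case $a=1$ is in hand, the general case follows from the elementary identity $m=a\s+(1-a)\b$ via H\"older's inequality
$$
\irn |x|^{mr}|u|^r\,dx = \irn \bigl(|x|^{\s}|u|\bigr)^{ar}\bigl(|x|^{\b}|u|\bigr)^{(1-a)r}\,dx \le \bigl\||x|^{\s}u\bigr\|_{L^{p_*}}^{ar}\bigl\||x|^{\b}u\bigr\|_{L^{q}}^{(1-a)r},
$$
where the H\"older exponents are chosen so that $(1-a)r/q+ar/p_*=1$ and $p_*$ is the Sobolev exponent associated with the weighted gradient norm on the right. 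Applying the $a=1$ estimate to $\bigl\||x|^{\s}u\bigr\|_{L^{p_*}}$ then closes the argument.

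The main obstacle is the weighted Sobolev step $a=1$: the classical unweighted proof via rearrangement or representation formulas breaks down in the presence of the weights $|x|^{\a}$ and $|x|^{m}$, because the associated integral operator is singular at the origin and the weights force one to stay within the Muckenhoupt/Stein--Weiss regime. Verifying this admissibility is exactly where the condition $\tfrac{1}{p}+\tfrac{\a}{N}>0$ (together with its analogues) enters, ensuring that the weighted Riesz potential is bounded between the target Lebesgue spaces. Once this boundedness is secured, the remaining pieces of the argument, namely scaling and H\"older interpolation, are mechanical.
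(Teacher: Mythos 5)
The paper does not contain a proof of Theorem~\ref{th:CKN}: it is a verbatim restatement of the classical Caffarelli--Kohn--Nirenberg theorem from \cite{CKN}, quoted in the introduction purely as background and motivation for the fractional analogues developed later. The paper's own arguments start from the Frank--Seiringer Hardy inequality and the nonlocal Picone lemma and never reproduce or invoke the proof of Theorem~\ref{th:CKN}, so there is no internal argument to compare your proposal against; what follows assesses the proposal on its own merits.

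Your high-level plan (dilations for necessity, a reduction to the endpoint weighted Sobolev case $a=1$ plus H\"older for sufficiency) is the natural first attempt, and your consistency check between the H\"older exponent $p_*$ and the $a=1$ scaling identity is correct given the CKN dimensional balance. However, there is a genuine gap in the sufficiency argument: your reduction needs the $a=1$ inequality with $(p_*,\s)$ in place of $(r,m)$, which requires $\frac{1}{p_*}+\frac{\s}{N}=\frac{1}{p}+\frac{\a-1}{N}>0$. That is \emph{not} implied by the stated hypotheses when $0<a<1$; only $\frac1p+\frac{\a}{N}>0$ is assumed. A concrete instance: $N=3$, $p=q=2$, $\a=-1$, $\b=0$, $\s=-1$, $a=\frac12$ gives $m=-\frac12$ and, from the scaling relation, $r=3$; every hypothesis of the theorem is met, yet $\frac1p+\frac{\a-1}{N}=-\frac16<0$, so the intermediate $a=1$ estimate you would feed into H\"older does not exist. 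The original CKN argument treats such ranges directly, not by interpolation between the endpoint cases $a=0$ and $a=1$. Separately, deferring the $a=1$ case to a Stein--Weiss lemma is a legitimate shortcut in the ``good'' range, but those bounds carry sign restrictions on the weight exponents that must actually be verified against the condition $0\le\a-\s\le1$; that verification is where the real content lies and cannot be subsumed under ``admissibility bookkeeping.''
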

This class of inequalities are related to the following local
elliptic problem
\begin{equation}\label{locals}
-\Div(|x|^{-p\g}|\n u|^{p-2}\n u)=0. \end{equation} As a
consequence of Theorem \ref{th:CKN}, it follows that $|x|^{-\g}$,
with $\g<\frac{N-p}{p}$, is an admissible weight in the sense that
if $u$ is a weak positive supersolution to \eqref{locals}, then it
satisfies a weak Harnack inequality.

More precisely, there exists a positive constant $\kappa>1$ such
that for all $0<q<\kappa(p-1)$,
$$
\Big(\int_{B_{2\rho}(x_0)}u^q(x)|x|^{-p\g} dx\Big)^{\frac 1q} \le
C\inf_{B_{\rho}(x_0)}u, $$ where $B_{2\rho}(x_0)\subset\subset\O$,
and $C>0$ depends only on $B$.

We refer to \cite{FB}, \cite{JTO} and the references therein for a
complete discussion and the proof of the weak Harnack inequality.

Notice that even the classical Harnack inequality holds for
positive solution to \eqref{locals}.

One of the main tools to get the weak Harnack inequality is a
weighted Sobolev inequality that can obtained directly from
Theorem \ref{th:CKN}.

An alternative argument to get the Sobolev inequality is to prove
a weighted Hardy inequality as it was observed in \cite{MAZ}.

The main goal of this paper is to follow this approach in order to
get a nonlocal version of the Caffarelli-Kohn-Nirenberg
inequalities.

In \cite{FS}, the authors proved the following Hardy inequality
stating that for $p>1$ with $sp<N$ and for all $\phi\in
\mathcal{C}^\infty_0(\ren)$,
\begin{equation}\label{hardy}
\dint_{\re^N}\dint_{\re^N}
\dfrac{|\phi(x)-\phi(y)|^p}{|x-y|^{N+ps}}dxdy\ge \L_{N,p,s}\irn
\dfrac{|\phi(x)|^p}{|x|^{ps}}dx
\end{equation}
where the constant $\L_{N,p,s}$ is given by
\begin{equation}\label{LL}
\L_{N,p,s}=2\int_0^\infty
|1-\sigma^{-\gamma}|^{p-2}(1-\sigma^{-\gamma})\sigma^{N-1}
K(\sigma)
\end{equation}
and
$$
K(\s)=\dint\limits_{|y'|=1}\dfrac{dH^{n-1}(y')}{|x'-\s
y'|^{N+ps}}.
$$
In the same paper, setting
$$
h_s(u)\equiv \dint_{\re^N}\dint_{\re^N}
\dfrac{|u(x)-u(y)|^{p}}{|x-y|^{N+ps}}dx dy -\Lambda_{N,p,s}
\dint_{\re^N} \dfrac{|u(x)|^p}{|x|^{ps}} dx,
$$
they proved that for $p\ge 2$, there exists a positive constant
$C=C(p,N,s)$ such that for all $u\in \mathcal{C}_0^\infty(\ren)$,
if $v=|x|^{\frac{N-ps}{p}}u $, it holds
\begin{equation}\begin{array}{rcl}
h_s(u)&\geq&
C\dint_{\ren}\dint_{\ren}\dfrac{|v(x)-v(y)|^p}{|x-y|^{N+ps}}\dfrac{\,dx}{|x|^{\frac{N-ps}{2}}}\dfrac{\,dy}{|y|^{\frac{N-ps}{2}}}.
\end{array} \label{IngPR}
\end{equation}
The above inequality turns to be equality for $p=2$ with $C=1$.

As a consequence of  \eqref{IngPR}, we easily get that
$\L_{N,p,s}$ is never achieved.

For $p=2$, the authors in \cite{APP} proved the next result:
\begin{Theorem}\label{main000}
Let $N\geq 1$, $0<s<1$ and $N>2s$. Assume that $\Omega\subset
\mathbb{R}^N$ is a bounded domain, then for all $1<q<2$, there
exists a positive constant $C=C(\Omega, q, N, s)$ such that for
all $u\in \mathcal{C}_0^\infty(\Omega)$,
\begin{equation}\label{BBB}
a_{N,s}\int_{\mathbb{R}^{N}}\int_{\mathbb{R}^{N}} \,
\frac{|u(x)-u(y)|^{2}}{|x-y|^{N+2s}}\,dx\,dy - \Lambda_{N,2,s}
\int_{\mathbb{R}^{N}} \frac{|u(x)|^2}{|x|^{2s}}\,dx\geq C(\Omega,
q, N, s)
\int_{\Omega}\int_{\Omega}\frac{|u(x)-u(y)|^2}{|x-y|^{N+qs}}\,dx\,dy.
\end{equation}
\end{Theorem}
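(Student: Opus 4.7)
My plan is to derive the inequality from the Frank--Seiringer ground-state identity \eqref{IngPR}, which for $p=2$ holds with equality and constant $C=1$. Setting $\alpha:=\frac{N-2s}{2}$ and $v(x):=|x|^{\alpha}u(x)$, one has
\begin{equation*}
h_{s}(u)\;=\;\iint_{\R^{N}\times\R^{N}}\frac{|v(x)-v(y)|^{2}}{|x-y|^{N+2s}\,|x|^{\alpha}|y|^{\alpha}}\,dx\,dy,
\end{equation*}
so, since the integrand is non-negative, $h_{s}(u)$ is bounded below by the same integral restricted to $\Omega\times\Omega$. Choose $R>0$ with $\Omega\subset B_{R}(0)$. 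On $\Omega\times\Omega$ one has $|x|^{\alpha}|y|^{\alpha}\le R^{N-2s}$ and, since $q<2$ and $|x-y|\le 2R$, $|x-y|^{-(N+2s)}\ge(2R)^{-(2-q)s}|x-y|^{-(N+qs)}$. Combining these I would obtain, for an explicit $\kappa=\kappa(R,N,s,q)>0$,
\begin{equation*}
h_{s}(u)\;\ge\;\kappa\iint_{\Omega\times\Omega}\frac{|v(x)-v(y)|^{2}}{|x-y|^{N+qs}}\,dx\,dy.
\end{equation*}

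The remaining step is to pass from the $v$-seminorm to the analogous $u$-seminorm on $\Omega\times\Omega$. I would use the symmetric decomposition
\begin{equation*}
v(x)-v(y)\;=\;\tfrac{|x|^{\alpha}+|y|^{\alpha}}{2}\bigl(u(x)-u(y)\bigr)+\tfrac{|x|^{\alpha}-|y|^{\alpha}}{2}\bigl(u(x)+u(y)\bigr),
\end{equation*}
together with the elementary inequality $(a+b)^{2}\ge(1-\eta)a^{2}-C_{\eta}b^{2}$ for $\eta\in(0,1)$, to obtain the pointwise estimate
\begin{equation*}
|v(x)-v(y)|^{2}\;\ge\;\tfrac{1-\eta}{4}\bigl(|x|^{\alpha}+|y|^{\alpha}\bigr)^{2}|u(x)-u(y)|^{2}-\tfrac{C_{\eta}}{4}\bigl(|x|^{\alpha}-|y|^{\alpha}\bigr)^{2}\bigl(u(x)+u(y)\bigr)^{2}.
\end{equation*}
After integration against $|x-y|^{-(N+qs)}\,dx\,dy$ on $\Omega\times\Omega$, the positive contribution yields a Gagliardo seminorm of $u$ weighted by $(|x|^{\alpha}+|y|^{\alpha})^{2}$.

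The main obstacle is twofold: the weight $(|x|^{\alpha}+|y|^{\alpha})^{2}$ degenerates at the origin, and the remainder involving $(|x|^{\alpha}-|y|^{\alpha})^{2}$ must be absorbed into $h_{s}(u)$. I expect the cleanest route is a contradiction/compactness argument: if \eqref{BBB} fails, there is a sequence $(u_{n})\subset\mathcal{C}_{0}^{\infty}(\Omega)$ with $h_{s}(u_{n})\to 0$ and $\iint_{\Omega\times\Omega}|u_{n}(x)-u_{n}(y)|^{2}|x-y|^{-(N+qs)}\,dx\,dy=1$, which, via the compact embedding $W_{0}^{s,2}(\Omega)\hookrightarrow W^{qs/2,2}(\Omega)$ (valid because $qs/2<s$ and $\Omega$ is bounded), converges strongly in $W^{qs/2,2}(\Omega)$ to some $u$ with unit seminorm. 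The Frank--Seiringer identity together with the non-attainment of $\Lambda_{N,2,s}$ (itself a direct consequence of \eqref{IngPR}) would then force $u\equiv 0$, a contradiction. The delicate point is ruling out a genuine concentration of $u_{n}$ at the origin: this is where one uses that the weighted $H^{s}$-seminorm of $v_{n}$ vanishes in the limit, forcing $v_{n}\to 0$ pointwise and hence $u_{n}\to 0$ away from $0$.
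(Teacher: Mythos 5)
Your starting point is the right one (the Frank--Seiringer ground-state identity \eqref{IngPR}, which for $p=2$ is an identity), and this is exactly where the paper (following \cite{APP}) begins. However, from there you diverge, and the proposal has a genuine gap that the compactness argument does not repair.

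The step that derails the argument is the crude lower bound $|x|^{-\alpha}|y|^{-\alpha}\ge R^{-(N-2s)}$ on $\Omega\times\Omega$. This throws away precisely the weight that makes the inequality work. Your subsequent symmetric splitting $v(x)-v(y)=\frac{|x|^{\alpha}+|y|^{\alpha}}{2}(u(x)-u(y))+\frac{|x|^{\alpha}-|y|^{\alpha}}{2}(u(x)+u(y))$ is a legitimate algebraic identity, but after the crude bound the main term carries the factor $(|x|^{\alpha}+|y|^{\alpha})^2$, which vanishes at the origin, so it is strictly weaker than the left-hand side of \eqref{BBB}. You identify this degeneracy yourself, but the obstruction is structural, not technical. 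Notice the elementary fact that, had you \emph{kept} the weight, the main term would instead come with the factor $\frac{(|x|^{\alpha}+|y|^{\alpha})^2}{|x|^{\alpha}|y|^{\alpha}}=\frac{|x|^{\alpha}}{|y|^{\alpha}}+2+\frac{|y|^{\alpha}}{|x|^{\alpha}}\ge 4$, which does \emph{not} degenerate. That is precisely the algebra the paper exploits: it writes $h_s(u)\ge\frac12\iint(f_1+f_2)$ with $f_1,f_2$ keeping the factors $(w(y)/w(x))^{p/2}$ and $(w(x)/w(y))^{p/2}$, introduces the bounded ratio $Q(x,y)=\frac{(w(x)w(y))^{p/2}}{w(x)^p+w(y)^p}$ and its reciprocal $D$, and closes the main term through the identity $Q\,D=1$. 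The remainder is then shown, by an explicit radial integration \`a la \cite{FV}, to reduce to $C\int_{\Omega}u^p/|x|^{qs}\,dx$, which is finally absorbed via a \emph{weighted} Hardy inequality (Lemma \ref{cor}) with the subcritical exponent $\beta_0=\frac{N-ps}{2}+\frac{(q-p)s}{2}<\frac{N-ps}{2}$.

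The proposed compactness argument, offered as a fallback, does not close the gap. If \eqref{BBB} fails, one gets $u_n\in\mathcal{C}^{\infty}_0(\Omega)$ with $h_s(u_n)\to 0$ and $\iint_{\Omega\times\Omega}|u_n(x)-u_n(y)|^2|x-y|^{-(N+qs)}\,dx\,dy=1$. Neither of these facts gives a uniform bound on $\|u_n\|_{W^{s,2}_0(\Omega)}$: the vanishing of $h_s(u_n)$ is exactly what happens along concentration sequences for the sharp, non-attained constant $\Lambda_{N,2,s}$, and the normalization involves the strictly weaker $W^{qs/2,2}$-seminorm. Without an a priori bound in $W^{s,2}_0(\Omega)$ the compact embedding $W^{s,2}_0(\Omega)\hookrightarrow W^{qs/2,2}(\Omega)$ cannot be invoked, so the extraction of a nontrivial limit $u$ is unjustified. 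Indeed, a soft compactness argument cannot be expected to prove an inequality whose whole point is to quantify the loss of compactness at the origin; one needs the hard analysis (explicit weights and radial computations) that the paper carries out.

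If you want to salvage your approach, do not discard the weight: keep $|x|^{-\alpha}|y|^{-\alpha}$ in step 4, use your symmetric decomposition, observe that the resulting main weight $\frac{(|x|^\alpha+|y|^\alpha)^2}{|x|^\alpha|y|^\alpha}\ge 4$, and then estimate the remainder $\iint_{\Omega\times\Omega}\frac{(|x|^{\alpha}-|y|^{\alpha})^2}{|x|^{\alpha}|y|^{\alpha}}\frac{(u(x)+u(y))^2}{|x-y|^{N+qs}}\,dx\,dy$ by the radial computation plus the weighted Hardy inequality, just as the paper does for its remainder $G(x,y)$.
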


One of the main results of this work is to generalize Theorem \ref{main000} to the case $p>2$. More precisely we have
the next Theorem:
\begin{Theorem}\label{main}
Let $p>2$, $0<s<1$ and $N>ps$. Assume that $\Omega\subset
\mathbb{R}^N$ is a bounded domain, then for all $1<q<p$, there
exists a positive constant $C=C(\Omega, q, N, s)$ such that for
all $u\in \mathcal{C}_0^\infty(\Omega)$,
\begin{equation}\label{sara}
\int_{\mathbb{R}^{N}}\int_{\mathbb{R}^{N}} \,
\frac{|u(x)-u(y)|^{p}}{|x-y|^{N+ps}}\,dx\,dy - \Lambda_{N,p,s}
\int_{\mathbb{R}^{N}} \frac{|u(x)|^p}{|x|^{ps}}\,dx\geq C
\int_{\Omega}\int_{\Omega}\frac{|u(x)-u(y)|^p}{|x-y|^{N+qs}}\,dx\,dy.
\end{equation}
\end{Theorem}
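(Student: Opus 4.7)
The plan is to use the Frank--Seiringer improvement \eqref{IngPR} to replace $h_s(u)$ by a weighted Gagliardo seminorm of the ground-state substitution $v:=|x|^{(N-ps)/p}u$, and then to transfer this weighted bound back to an ordinary but less singular Gagliardo seminorm of $u$ on the bounded domain $\Omega$ via a region-adapted algebraic decomposition.

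Setting $\alpha:=(N-ps)/p$, inequality \eqref{IngPR} reduces the claim to proving, for all $u\in\mathcal{C}_0^\infty(\Omega)$ and all $1<q<p$,
\[
\iint_{\R^N\times\R^N}\frac{|v(x)-v(y)|^p}{|x-y|^{N+ps}}\frac{dx\,dy}{|x|^{\alpha p/2}|y|^{\alpha p/2}}\;\geq\;C(\Omega,q,N,s)\iint_{\Omega\times\Omega}\frac{|u(x)-u(y)|^p}{|x-y|^{N+qs}}\,dx\,dy.
\]
On $D_1:=\{(x,y):|y|\leq|x|\}$ I would expand via the identity $u(x)-u(y)=|x|^{-\alpha}(v(x)-v(y))+v(y)(|x|^{-\alpha}-|y|^{-\alpha})$; on $D_2:=\{(x,y):|y|>|x|\}$ I would use the symmetric form that factors out $|y|^{-\alpha}$ from the first term. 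Applying $|a+b|^p\leq 2^{p-1}(|a|^p+|b|^p)$ splits the right-hand side into a gradient piece $A$ (with integrand $|v(x)-v(y)|^p$) and a Hardy-remainder piece $B$ (with integrand $|v|^p\cdot||x|^{-\alpha}-|y|^{-\alpha}|^p$).

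The gradient piece $A$ carries, by the regional choice, the effective weight $\min\{|x|^{-\alpha p},|y|^{-\alpha p}\}$; the elementary inequality $\min(a,b)\leq\sqrt{ab}$ then yields the pointwise bound $\min\{|x|^{-\alpha p},|y|^{-\alpha p}\}\leq|x|^{-\alpha p/2}|y|^{-\alpha p/2}$, while $|x-y|^{-N-qs}\leq\mathrm{diam}(\Omega)^{(p-q)s}|x-y|^{-N-ps}$ on $\Omega^2$ upgrades the kernel exponent to $ps$, so $A$ is controlled by the weighted Gagliardo on the left. For the remainder piece $B$, one performs the ``free'' spatial integration using $||x|^{-\alpha}-|y|^{-\alpha}|\leq C|x-y|^{\min(1,\alpha)}\max(|x|,|y|)^{-\alpha-\min(1,\alpha)}$ and the rescaling $x=|y|z$ to obtain $\int_{\R^N}||x|^{-\alpha}-|y|^{-\alpha}|^p|x-y|^{-N-qs}\,dx\leq C|y|^{-\alpha p-qs}$; since $v=|x|^\alpha u$, this reduces $B$ to $\int_\Omega|u|^p/|y|^{qs}\,dy$, which by the fractional Hardy inequality \eqref{hardy} applied at the level $qs/p$ in place of $s$ is controlled by the right-hand side of the target inequality. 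The $B$-term is then absorbed by inserting a small parameter $\epsilon$ into the initial expansion via $|a+b|^p\leq(1+\epsilon)|a|^p+C_\epsilon|b|^p$, so that only a small multiple of the right-hand side reappears and can be moved to the left.

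The main obstacle is the weight mismatch resolved by the region-adapted decomposition combined with $\min(a,b)\leq\sqrt{ab}$; without it, the weight $|x|^{-\alpha p}$ arising from the natural expansion fails pointwise to be dominated by the symmetric weight $|x|^{-\alpha p/2}|y|^{-\alpha p/2}$ available on the Frank--Seiringer side. The strict inequality $q<p$ enters at two points: it provides the diameter factor $\mathrm{diam}(\Omega)^{(p-q)s}$ for the kernel upgrade, and it ensures that the fractional Hardy inequality at parameter $qs/p$ is available for the absorption step. The hypothesis $p\geq 2$ is used only through \eqref{IngPR}; all subsequent manipulations are purely algebraic.
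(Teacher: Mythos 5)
Your overall reduction via \eqref{IngPR} and the region-adapted decomposition $u(x)-u(y)=|x|^{-\alpha}(v(x)-v(y))+v(y)(|x|^{-\alpha}-|y|^{-\alpha})$ on $\{|y|\le|x|\}$ (and its mirror on $\{|y|>|x|\}$) is a genuinely different and in fact cleaner route to the gradient piece than the paper's, which instead expands the weighted differences of $v$ from below with a binomial-type inequality. Your treatment of the piece $A$ is sound: the kernel upgrade $|x-y|^{-N-qs}\le\mathrm{diam}(\Omega)^{(p-q)s}|x-y|^{-N-ps}$ on $\Omega\times\Omega$ together with $\min\{|x|^{-\alpha p},|y|^{-\alpha p}\}\le|x|^{-\alpha p/2}|y|^{-\alpha p/2}$ does land $A$ under the Frank--Seiringer weighted seminorm of $v$. (A small side issue: the pointwise bound $||x|^{-\alpha}-|y|^{-\alpha}|\le C|x-y|^{\min(1,\alpha)}\max(|x|,|y|)^{-\alpha-\min(1,\alpha)}$ is not correct when $|y|\ll|x|$; however, the integral estimate $\int||x|^{-\alpha}-|y|^{-\alpha}|^p|x-y|^{-N-qs}dx\le C|y|^{-\alpha p-qs}$ that you actually need follows directly by the rescaling $x=|y|z$ and a convergent radial integral, so this is only a local slip.)

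The genuine gap is in the treatment of the remainder $B$ and the proposed absorption. After the radial integration you correctly reduce $B$ to a multiple of $\int_\Omega|u|^p/|y|^{qs}\,dy$, but bounding this via \eqref{hardy} at parameter $qs/p$ produces the \emph{full-space} seminorm $\iint_{\R^N\times\R^N}|u(x)-u(y)|^p/|x-y|^{N+qs}\,dx\,dy$, which dominates the target $\Omega\times\Omega$ integral rather than being dominated by it; and even if it did produce $\iint_{\Omega\times\Omega}$, the $(1+\epsilon)/C_\epsilon$ split cannot absorb, because the Hardy constant you would need to be $<1$ is a fixed quantity you have no control over. The correct way to close the estimate -- and what the paper's proof does with Lemma \ref{cor} -- is to rewrite $\int_\Omega|u|^p/|y|^{qs}\,dy=\int_\Omega|v|^p/|y|^{ps+2\beta_0}\,dy$ with $\beta_0=\frac{N-ps}{2}-\frac{(p-q)s}{2}<\frac{N-ps}{2}$ and apply the \emph{weighted} Hardy inequality on the bounded domain $\Omega$ for the function $v$; since $\beta_0<\frac{N-ps}{2}$ and $\Omega$ is bounded, the resulting weighted Gagliardo seminorm with weight $|x|^{-\beta_0}|y|^{-\beta_0}$ is controlled by the Frank--Seiringer weighted seminorm of $v$ and hence by $h_s(u)$. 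With that replacement the error is bounded by the quantity already on the left-hand side, so no absorption is needed at all -- you simply get $\iint_{\Omega\times\Omega}|u(x)-u(y)|^p/|x-y|^{N+qs}\,dx\,dy\le 2^{p-1}(C_A+C_B)\,h_s(u)$ directly.
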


As a consequence we get the next "fractional"
Caffarelli-Kohn-Nirenberg inequality in bounded domain.

\begin{Theorem}\label{main01}
Let $p\ge 2$, $0<s<1$ and $N>ps$. Assume that $\Omega\subset
\mathbb{R}^N$ is a bounded domain, then for all $1<q<p$, there
exists a positive constant $C=C(\Omega, q, N, s)$ such that for
all $u\in \mathcal{C}_0^\infty(\Omega)$,
\begin{equation}\label{sara00}
\dint_{\ren}\dint_{\ren}\dfrac{|u(x)-u(y)|^p}{|x-y|^{N+ps}}\dfrac{\,dx}{|x|^{\b}}\dfrac{\,dy}{|y|^{\b}}\,dx\,dy
\ge C\Big(\dint\limits_{\O} \dfrac{|u(x)|^{p^*_{s,q}}}{|x|^{2\b
\frac{p^*_{s,q}}{p}}}\,dx\Big)^{\frac{p}{p^*_{s,q}}}
\end{equation}
where $p^*_{s,q}=\frac{pN}{N-qs}$ and $\b=\frac{N-ps}{2}$.
\end{Theorem}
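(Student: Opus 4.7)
The strategy is to reduce \eqref{sara00} to the improved Hardy inequality of Theorem \ref{main} via the Frank--Seiringer ground state substitution. Given $u \in \mathcal{C}_0^\infty(\Omega)$, set
$$v(x) = |x|^{-(N-ps)/p} u(x) = |x|^{-2\beta/p} u(x),$$
so that $u(x) = |x|^{2\beta/p} v(x)$. A direct computation shows
$$\int_\Omega \dfrac{|u(x)|^{p^*_{s,q}}}{|x|^{2\beta p^*_{s,q}/p}}\,dx = \int_\Omega |v(x)|^{p^*_{s,q}}\,dx,$$
so the right-hand side of \eqref{sara00} coincides with $\|v\|_{L^{p^*_{s,q}}(\Omega)}^{p}$. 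In this way the weights on the right of the target inequality are absorbed into the change of variable, and what must be proved reduces to an \emph{unweighted} bound on $v$.

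Next I would apply Theorem \ref{main} (or Theorem \ref{main000} if $p=2$) directly to the function $v$, which gives
$$h_s(v) \geq C \iint_{\Omega\times\Omega} \dfrac{|v(x) - v(y)|^p}{|x-y|^{N+qs}}\,dx\,dy.$$
Combined with the classical fractional Sobolev embedding $W^{qs/p,\,p}(\Omega) \hookrightarrow L^{p^*_{s,q}}(\Omega)$ (valid because $p^*_{s,q} = pN/(N-qs)$ is precisely the critical exponent), the right-hand side dominates $C\|v\|_{L^{p^*_{s,q}}(\Omega)}^{p}$. Hence
$$h_s(v) \geq C \cdot \text{RHS of }\eqref{sara00}.$$

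The final step is to identify (or bound) $h_s(v)$ with the weighted seminorm on the left of \eqref{sara00}. For $p=2$, this is clean and immediate: the Frank--Seiringer substitution is an \emph{identity}, so
$$h_s(v) = \iint \dfrac{|u(x) - u(y)|^2}{|x-y|^{N+2s}|x|^\beta |y|^\beta}\,dx\,dy = \text{LHS of }\eqref{sara00},$$
and chaining with the previous bound completes the proof of Theorem \ref{main01} for $p=2$.

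The main obstacle is precisely this last step when $p>2$: the inequality \eqref{IngPR} applied to $v$ only yields $h_s(v) \geq C \cdot \text{LHS of }\eqref{sara00}$, which is the \emph{opposite} direction of what is needed. To repair the argument, one must produce a matching upper estimate $h_s(v) \leq C \cdot \text{LHS of }\eqref{sara00}$, i.e.\ a reverse-type Frank--Seiringer inequality for $p \geq 2$. A natural route is to seek a pointwise estimate of the form
$$\dfrac{|v(x)-v(y)|^p}{|x-y|^{N+ps}} - (\text{Hardy density}) \leq C\,\dfrac{|u(x)-u(y)|^p}{|x-y|^{N+ps}|x|^\beta|y|^\beta},$$
exploiting the convexity of $t \mapsto t^p$ for $p\geq 2$ together with the specific algebraic structure of the weight $|x|^{2\beta/p}$ that intertwines $u$ and $v$; integrating such a bound would give the missing direction. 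This reverse control, which trivialises at $p=2$, is the genuinely delicate part of the argument and the place where most of the work for $p>2$ must be done.
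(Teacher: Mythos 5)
Your set-up and the Sobolev step are fine, and you have correctly diagnosed where your chain of inequalities breaks for $p>2$: you are trying to sandwich the quantity $h_s(v)$ between the left-hand side and the right-hand side of \eqref{sara00}, but the Frank--Seiringer estimate \eqref{IngPR} only gives $h_s(v)\gtrsim \mathrm{LHS}$, which together with $h_s(v)\gtrsim \mathrm{RHS}$ yields nothing. The missing idea, however, is not a ``reverse Frank--Seiringer inequality'' --- there is no need to involve $h_s$ at all. The paper's proof uses the intermediate estimate \eqref{mad1}, established \emph{inside} the proof of Theorem \ref{main} (before the Frank--Seiringer bound \eqref{FS1} is invoked), which reads: for $\phi\in \mathcal{C}_0^\infty(\Omega)$ and $\psi=|x|^{\alpha}\phi$ with $\alpha=(N-ps)/p$,
\begin{equation*}
\dint_{\Omega}\dint_{\Omega}\dfrac{|\phi(x)-\phi(y)|^p}{|x-y|^{N+qs}}\,dx\,dy
\le C\dint_{\ren}\dint_{\ren}\dfrac{|\psi(x)-\psi(y)|^p}{|x-y|^{N+ps}}\dfrac{dx}{|x|^{\beta}}\dfrac{dy}{|y|^{\beta}}.
\end{equation*}
Applying this with $\phi = u_1 := u/|x|^{\alpha}$ (so that $\psi = u$), the right-hand side becomes exactly the weighted seminorm on the left of \eqref{sara00}, and the left-hand side is the unweighted $q$-seminorm of $u_1$ over $\Omega\times\Omega$. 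The fractional Sobolev embedding then controls $\|u_1\|_{L^{p^*_{s,q}}(\Omega)}^p$ by this $q$-seminorm, and $\|u_1\|_{L^{p^*_{s,q}}(\Omega)}^p$ is precisely the right-hand side of \eqref{sara00}. No comparison with $h_s(v)$ appears anywhere, and no reverse ground-state inequality is needed --- the one-sided estimate \eqref{mad1} already points in the useful direction. Your proposal is therefore incomplete precisely because you passed to the conclusion of Theorem \ref{main} rather than isolating the key quantitative step \eqref{mad1} from its proof.
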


In the case where $\O=\ren$, to get a \emph{natural}
generalization of the classical Caffarelli-Kohn-Nirenberg
inequality obtained in \cite{CKN}, we have to consider a class of
\emph{admissible weights} in the sense of \cite{JTO}. Precisely
we  obtain the following weighted Sobolev inequality.
\begin{Theorem}\label{CKN01}
Assume that $1<p<\frac{N}{s}$ and let $0<\b<\frac{N-ps}{2},$ then
for all $u \in \mathcal{C}_0^\infty(\ren)$, we have
\begin{equation}\label{CKNNN}
\begin{array}{rcl}
\dint\limits_{\re^N}\dint\limits_{\re^N}
\dfrac{|u(x)-u(y)|^p}{|x-y|^{N+ps}}\dfrac{dx}{|x|^{\beta}}
\dfrac{dy}{|y|^\beta}\ge S(\b)\Big(\dint\limits_{\re^N}
\dfrac{|u(x)|^{p^*_s}}{|x|^{2\beta\frac{p^*_s}{p}}}\,dx\Big)^{\frac{p}{p^*_s}},
\end{array}
\end{equation}
where $S(\b)>0$.
\end{Theorem}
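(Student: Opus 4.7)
The plan is to reduce the weighted inequality \eqref{CKNNN} to the unweighted fractional Sobolev embedding via the change of dependent variable $v(x):=|x|^{-2\b/p}u(x)$. The subcriticality assumption $\b<(N-ps)/2$ is precisely the condition $2\b p^*_s/p<N$, which guarantees $v\in L^{p^*_s}(\ren)$ for any $u\in\mathcal{C}_0^\infty(\ren)$. By construction the right-hand side of \eqref{CKNNN} equals $\|v\|_{L^{p^*_s}}^p$.

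A direct computation with $t:=(|x|/|y|)^{\b/p}$ yields the pointwise identity
$$\frac{|u(x)-u(y)|^p}{|x|^\b|y|^\b}=\bigl|\,t\,v(x)-t^{-1}v(y)\bigr|^p,$$
so the left-hand side of \eqref{CKNNN} equals $\dint\dint|tv(x)-t^{-1}v(y)|^p\,|x-y|^{-N-ps}\,dx\,dy$. The goal then reduces to the key comparison
$$\dint_{\ren}\dint_{\ren}\frac{|tv(x)-t^{-1}v(y)|^p}{|x-y|^{N+ps}}\,dx\,dy\;\ge\;C(\b)\dint_{\ren}\dint_{\ren}\frac{|v(x)-v(y)|^p}{|x-y|^{N+ps}}\,dx\,dy$$
with some $C(\b)>0$; combined with the standard fractional Sobolev inequality for $v$, this gives \eqref{CKNNN} with $S(\b)=C(\b)S_{N,p,s}$.

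The model case $p=2$ admits a clean proof: I would expand $|tv(x)-t^{-1}v(y)|^2=|v(x)-v(y)|^2+(t^2-1)v(x)^2+(t^{-2}-1)v(y)^2$; symmetrize in $(x,y)$ (the swap sends $t\mapsto t^{-1}$), collapsing the correction into $-\frac{2c_1(\b)}{c_{N,s}}\int v^2|x|^{-2s}\,dx$, where $c_1(\b)$ is the Beckner constant in the identity $(-\De)^s(|x|^{-\b})=c_1(\b)|x|^{-\b-2s}$; and then invoke the fractional Hardy inequality \eqref{hardy} applied to $v$. Since the function $c_1(\cdot)$ is symmetric about $(N-2s)/2$ and attains its maximum $c_1((N-2s)/2)=\frac{c_{N,s}}{2}\Lambda_{N,2,s}$ there, one has $c_1(\b)<c_1((N-2s)/2)$ for $0<\b<(N-2s)/2$, yielding the explicit constant $C(\b)=1-c_1(\b)/c_1((N-2s)/2)>0$.

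For $p\neq 2$ the nonlinear expansion of $|tv(x)-t^{-1}v(y)|^p$ is no longer a finite polynomial identity, and the Beckner calculation must be replaced by a more delicate argument. The natural substitute is the Frank--Seiringer ground-state technology behind \eqref{IngPR}: one writes $|tv(x)-t^{-1}v(y)|^p-|v(x)-v(y)|^p$ as a symmetric Hardy-type term plus a non-negative remainder, with the Hardy coefficient controlled strictly below $\Lambda_{N,p,s}$ exactly when $\b<(N-ps)/2$. Establishing this expansion with an optimal constant is the principal obstacle; its essence is that transferring the weight $(|x||y|)^{-\b}$ from the seminorm of $u$ onto $v$ produces only a subcritical Hardy correction, a fact which is transparent for $p=2$ but genuinely nonlinear otherwise.
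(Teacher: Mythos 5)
Your change of variables $v=|x|^{-2\b/p}u$ and the reduction to showing
$$
\dint_{\ren}\dint_{\ren}\dfrac{|tv(x)-t^{-1}v(y)|^p}{|x-y|^{N+ps}}\,dx\,dy\;\ge\;C(\b)\dint_{\ren}\dint_{\ren}\dfrac{|v(x)-v(y)|^p}{|x-y|^{N+ps}}\,dx\,dy,
\qquad t=\bigl(|x|/|y|\bigr)^{\b/p},
$$
followed by the unweighted fractional Sobolev inequality, coincide exactly with the skeleton of the paper's proof (there $\tilde u=u/w_1$ with $w_1=|x|^{2\b/p}$ plays the role of your $v$). The $p=2$ outline via the pointwise identity, symmetrization and Beckner's formula for $(-\De)^s|x|^{-\b}$ is a legitimate alternative route in that case.

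However, for $p\neq 2$ your argument stops at exactly the point where the real work is, and you in fact mis-predict the nature of the missing step. You state that the analogue of the $p=2$ computation requires ``the Frank--Seiringer ground-state technology'' with a sharp Hardy coefficient, and that ``establishing this expansion with an optimal constant is the principal obstacle.'' No such sharp decomposition is needed, and the paper does not use one. Instead, after the same change of variable, the paper expands $\tilde f_1=|tv(x)-t^{-1}v(y)|^p=\bigl(w_1(x)/w_1(y)\bigr)^{p/2}\bigl|(v(x)-v(y))-w_1(y)v(y)\bigl(\tfrac1{w_1(x)}-\tfrac1{w_1(y)}\bigr)\bigr|^p$, applies the elementary convexity estimate $|a-b|^p\ge |a|^p - p|a|^{p-1}|b|$ together with Young's inequality to absorb a small multiple of the seminorm of $v$, and is left to bound a remainder of the form
$$
\dint_{\ren}\dint_{\ren}\left(\dfrac{w_1(y)}{w_1(x)}\right)^{p/2}\dfrac{\bigl|w_1(x)v(x)\bigl(\tfrac1{w_1(y)}-\tfrac1{w_1(x)}\bigr)\bigr|^p}{|x-y|^{N+ps}}\,dx\,dy.
$$
A radial computation (the same $K(\sigma)$ kernel calculation as in Lemma \ref{ictp1}) reduces this to a constant times $\int|u|^p|x|^{-2\b-ps}\,dx$, and this term is controlled \emph{not} by a sharp Frank--Seiringer expansion but by the plain weighted Hardy inequality \eqref{IngL1} (Theorem \ref{HHH}), which was established for all $\b<(N-ps)/2$ via Picone's inequality applied to the power weight $|x|^{-\g}$. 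In other words, the crucial input is the non-sharp weighted Hardy inequality, whose proof is soft, and the convexity estimates never need an optimal constant. By orienting yourself toward a sharp remainder identity you made the $p\neq 2$ case look intractable, and as a result you did not produce a proof for general $p$; the claimed theorem is left unestablished outside $p=2$.

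To repair the argument, replace the appeal to ground-state sharpness by: (i) expand $|tv(x)-t^{-1}v(y)|^p$ using the elementary inequality and Young, (ii) reduce the error term to $\int|u|^p|x|^{-ps-2\b}$ by a one-dimensional kernel integral, and (iii) close with the weighted Hardy inequality \eqref{IngL1}, which holds precisely under your hypothesis $0<\b<(N-ps)/2$.
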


It is clear that the condition imposed on $\beta$ coincides in
some sense with definition of \emph{admissible} weight given in
\cite{JTO}. The proof of Theorem \ref{CKN01} is based on some
weighted Hardy inequality given below.

\

As a direct application of the previous results, we will consider
the problem
\begin{equation}\label{probb}
\left\{
\begin{array}{rcll}
L_{p,s}\, u-\l\dfrac{|u|^{p-2}u}{|x|^{ps}} &= & |u|^{q-1}u, \quad  u>0  & \hbox{ in  } \Omega,\\
u&=&0\quad & \hbox{  in  } \mathbb{R}^N\setminus\Omega,
\end{array}
\right.
\end{equation}
where $$ L_{s,p}\, u(x):=\mbox{ P.V. } \int_{\mathbb{R}^{N}} \,
\frac{|u(x)-u(y)|^{p-2}(u(x)-u(y))}{|x-y|^{N+ps}}\,dy, $$ $0<\l\le
\L_{N,p,s}$ and $q>0$.

In the local case, the problem is reduced to
\begin{equation}\label{int0}
\left\{
\begin{array}{rcll}
-\D_p\, u-\l\dfrac{|u|^{p-2}u}{|x|^{p}}& = & |u|^{q-1}u,\quad  u>0  &\hbox{ in  } \Omega,\\
u&=&0\quad & \hbox{  on  } \p\O.
\end{array}
\right.
\end{equation}
For $p=2$, the authors in \cite{BDT} proved that if $q>q_+(2)$,
then problem \eqref{int0} has no distributional supersolution,
however, if $q<q_+(2)$, there exists a positive supersolution,
with $q_+(2)=1+\frac{2}{\theta_1}$,
$\theta_1=\frac{N-2}{2}-\sqrt{\L_{N,2}-\l}$ and
$\L_{N,2}=\frac{(N-2)^2}{4}$, the classical Hardy constant.

The case $p\neq 2$ was considered in \cite{APV} where the same
alternative holds with $q_+(p)=p-1+\frac{p}{\theta_p}$ where
$\theta_p$ is the smallest solution to the equation
$$
\Xi(s)=(p-1)s^p-(N-p)s^{p-1}+\l.
$$
The fractional case with $p=2$ was studied in \cite{F} and
\cite{BMP}. The authors proved the same alternative with
$q_+(2,s)=1+\frac{2s}{\theta}$ where $\theta\equiv
\theta(\l,s,N)>0.$

Our goal is to extend the results of \cite{F} and \cite{BMP} to
the case $p\neq 2$.

The paper is organized as follows.

In Section \ref{sec1} we prove the main results, namely Theorems
\ref{main}, \ref{main01} and \ref{CKN01}.

The starting point will be the proof of a general version of the
Picone inequality. As a consequence, we get a weighted version of
the Hardy inequality for a class of "admissible weights".

Hence, following closely the arguments used in \cite{APP}, taking
in consideration the "weighted" Hardy inequality, we get the proof
of Theorem \ref{main}.

Once Theorem \ref{main} proved, we complete the proof of Theorem
\ref{main01} using suitable Sobolev inequality.

 At the end, and by using a weighted Hardy inequality, we are able
to get a "fractional Caffarelli-Kohn-Nirenberg" inequality for
admissible weights in $\ren$ and then to proof Theorem
\ref{CKN01}.

In section \ref{appl}, we analyze problem \eqref{int0}. We prove
the existence of a critical exponent $q_+(p,s)$ such that if
$q>q_+(p,s)$, then problem \eqref{int0} has no positive solution
in a suitable sense. To show the optimality of the non-existence
exponent, we will construct an appropriate supersolution in the
whole space.

In the whole of the paper we will use the next elementary
inequality, see for instance \cite{FS}.
\begin{Lemma}\label{alg}
Assume that $p>1$, then for all $0\le t\le 1$ and $a\in
\mathbb{C}$, we have
\begin{equation}\label{alg1}
|a-t|^p\geq(1-t)^{p-1}(|a|^p-t).
\end{equation}
\end{Lemma}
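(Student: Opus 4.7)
The plan is to reduce \eqref{alg1} to a one-dimensional real inequality and then invoke the convexity of $x\mapsto x^p$. First I would note that, writing $s:=|a|\ge 0$, the reverse triangle inequality $|a-t|\ge |s-t|$ reduces matters to proving
$$
|s-t|^p \;\ge\; (1-t)^{p-1}\bigl(s^p - t\bigr) \qquad\text{for every }s\ge 0,\; t\in[0,1].
$$

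I would then split into two cases according to the sign of the right-hand side. When $s^p\le t$, the inequality is immediate because the right-hand side is non-positive and the left-hand side is non-negative. Otherwise $s^p>t$, and since $0\le t\le 1$ and $p>1$ force $t\le t^{1/p}<s$, one gets $s>t$ and hence $|s-t|=s-t$. Excluding the trivial case $t=1$ (where \eqref{alg1} reduces to $|a-1|^p\ge 0$), I would set $u:=(s-t)/(1-t)>0$, whose virtue is the identity
$$
s \;=\; t + u(1-t) \;=\; (1-t)\,u + t\cdot 1,
$$
exhibiting $s$ as a convex combination, with weights $1-t$ and $t$, of the non-negative numbers $u$ and $1$.

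Applying the convexity of $x\mapsto x^p$ on $[0,\infty)$ (valid precisely because $p>1$) then yields
$$
s^p \;=\; \bigl((1-t)u + t\bigr)^p \;\le\; (1-t)\,u^p + t,
$$
i.e.\ $s^p - t\le (1-t)u^p$. Multiplying by $(1-t)^{p-1}\ge 0$ and noting $(1-t)^{p-1}(1-t)u^p=(s-t)^p$ would finish the argument. The only non-routine step is spotting the right substitution: once $s$ is rewritten as the convex combination $(1-t)u + t\cdot 1$, the factor $(1-t)^{p-1}$ and the $-t$ on the right of \eqref{alg1} are precisely what Jensen's inequality delivers, so no further estimation is required.
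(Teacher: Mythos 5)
Your proof is correct. A small point worth making explicit: the reduction $|a-t|\ge \big||a|-t\big|$ for $a\in\mathbb{C}$ and $t\ge 0$ is the reverse triangle inequality $|a-t|\ge \big||a|-|t|\big|$ together with $|t|=t$, and the inference $s^p>t\Rightarrow s>t$ uses $t\le t^{1/p}$ for $t\in[0,1]$, $p>1$, both of which you invoke accurately. The case split ($s^p\le t$ trivial; $t=1$ trivial; otherwise set $u=(s-t)/(1-t)$, write $s=(1-t)u+t\cdot 1$ as a convex combination, apply Jensen for $x\mapsto x^p$, and multiply by $(1-t)^{p-1}$) closes the argument with no gaps.

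The paper itself gives no proof: it states Lemma \ref{alg} and defers to the reference \cite{FS}. So your argument is not ``the same as the paper's'' in any literal sense — it supplies a self-contained, elementary convexity proof where the paper supplies only a citation. This is the standard and most economical route to this inequality, and it matches the spirit of the ground-state-representation technique in \cite{FS} where such pointwise convexity estimates are the basic tool.
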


\section{Statement and proof of the main results}\label{sec1}
Let us begin with some functional settings that will be used
below, we refer to \cite{DPV} and \cite{MAZ} for more details.

For $s\in (0,1)$ and $p\ge 1$, we define the fractional Sobolev
spaces $W^{s,p}(\Omega)$, $\Omega\subset \ren$, by
$$
W^{s,p}(\Omega)\equiv \{u\in
L^p(\Omega):\dint_{\Omega}\dint_{\Omega}\dfrac{|u(x)-u(y)|^p}{|x-y|^{N+ps}}dxdy<\infty\}.
$$
It is clear that $W^{s,p}(\Omega)$ is a Banach space endowed with
the norm
$$
||u||_{W^{s,p}(\Omega)}=||u||_{L^p(\Omega)}+\Big(\dint_{\Omega}\dint_{\Omega}\dfrac{|u(x)-u(y)|^p}{|x-y|^{N+ps}}dxdy\Big)^{\frac
1p}.
$$
\

In the same way, we define the space $X^{s,p}_0(\O)$ as the
completion of $\mathcal{C}^\infty_0(\O)$ with respect to the norm
of $W^{s,p}(\O)$.

Notice that, if $Q=\ren\times \ren\setminus (\mathcal{C}\O\times
\mathcal{C}\O)$, then
$$
||\phi||_{X^{s,p}_0(\O)}=\Big(\dint\int_{Q}\dfrac{|\phi(x)-\phi(y)|^p}{|x-y|^{N+ps}}dxdy\Big)^{\frac
1p}+||\phi||_{L^p(\Omega)}.
$$
Using the fractional Sobolev inequality we obtain
$X^{s,p}_0(\O)\subset L^{p^*_s}(\O)$ with continuous inclusion,
where $p^*_s=\frac{pN}{N-sp}$ for $ps<N$.

In the case where $\Omega$ is a bounded regular domain, the space
$X^{s,p}_0(\O)$ can be endowed with the equivalent norm
$$
|||\phi|||_{X^{s,p}_0(\O)}=\Big(\dint\int_{Q}\dfrac{|\phi(x)-\phi(y)|^p}{|x-y|^{N+ps}}dxdy\Big)^{\frac
1p}.
$$

\

To prove the \emph{fractional} Caffarelli-Kohn-Nirenberg
inequality, we need to define fractional Sobolev spaces with
weight. More precisely, let $0< \beta<\frac{N-ps}{2}$ and
$\O\subset \ren$ with $0\in \O$, the weighted Sobolev space
$X^{s,p, \b}(\O)$ is defined by
$$
X^{s,p,\b}(\O)\dyle := \Big\{ \phi\in
L^p(\O,\frac{dx}{|x|^{2\beta}}):\dint_{\O}\dint_{\O}\dfrac{|\phi(x)-\phi(y)|^p}{|x-y|^{N+ps}}\dfrac{dxdy}{|x|^\beta|y|^\beta}<+\infty\Big\}.
$$
Thus $X^{s,p,\b}(\O)$ is a Banach space endowed with the norm
$$
\|\phi\|_{X^{s,p,\beta}(\O)}=
\Big(\dint_{\O}\frac{|\phi(x)|^pdx}{|x|^{2\beta}}\Big)^{\frac 1p}
+\Big(\dint_{\O}\dint_{\O}\dfrac{|\phi(x)-\phi(y)|^p}{|x-y|^{N+ps}}\dfrac{dxdy}{|x|^\beta|y|^\beta}\Big)^{\frac
1p}.
$$
Now, we define the weighted Sobolev space $X^{s,p,\beta}_0(\O)$ as
the completion of $\mathcal{C}^\infty_0(\O)$ with respect to the
previous norm.

As in \cite{Adams}, see also \cite{DPV}, we can prove the
following extension result.
\begin{Lemma}\label{ext}
Assume that $\Omega\subset \ren$ is a regular domain, then for all
$w\in X^{s,p,\beta}(\Omega)$, there exists $\tilde{w}\in
X^{s,p,\beta}(\ren)$ such that $\tilde{w}_{|\Omega}=w$ and
$$
||\tilde{w}||_{X^{s,p,\beta}(\ren)}\le C
||w||_{X^{s,p,\beta}(\Omega)}
$$
where $C\equiv C(N,s,p,\O)>0$.
\end{Lemma}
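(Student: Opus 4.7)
The plan is to reduce the weighted extension problem to the classical (unweighted) fractional Sobolev extension theorem (cf.\ \cite{DPV, Adams}) via a partition of unity that isolates the boundary of $\Omega$ from the singular locus of the weight. The crucial geometric fact is that since $0\in\Omega$ is an interior point and $\Omega$ is bounded and regular, $d_0:=\dist(0,\partial\Omega)>0$, so on any sufficiently thin tubular neighborhood $N_\delta$ of $\partial\Omega$ (with $\delta<d_0/3$) the weight $|x|^{-\beta}$ is uniformly comparable to a positive constant.

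First I would fix such a $\delta$, choose $\eta\in\mathcal{C}^\infty_0(\ren)$ with $0\le\eta\le 1$, $\eta\equiv 1$ on $N_{\delta/2}$ and $\mathrm{supp}\,\eta\subset N_\delta$, and decompose $w=\eta w+(1-\eta)w=:w_1+w_2$. A routine commutator estimate for smooth cut-offs in fractional Sobolev spaces, combined with the boundedness of $\eta$, gives
\begin{equation*}
\|w_1\|_{X^{s,p,\beta}(\Omega)}+\|w_2\|_{X^{s,p,\beta}(\Omega)}\le C(\eta)\,\|w\|_{X^{s,p,\beta}(\Omega)}.
\end{equation*}

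Next I would extend the two pieces separately. The support of $w_1$ lies in $\{|x|\ge 2d_0/3\}$, so the weight is comparable to a constant there and $\|w_1\|_{X^{s,p,\beta}}$ is equivalent to $\|w_1\|_{W^{s,p}}$. The classical extension theorem of \cite{DPV, Adams} yields $\widetilde{w_1}\in W^{s,p}(\ren)$ with $\widetilde{w_1}|_\Omega=w_1$, and after multiplying by a further cut-off one may arrange $\mathrm{supp}\,\widetilde{w_1}\subset N_{2\delta}$, so that the unweighted estimate transfers directly to the weighted norm. For $w_2$, since $1-\eta\in\mathcal{C}^\infty_0(\Omega)$ and $\dist(\mathrm{supp}\,(1-\eta),\partial\Omega)\ge\delta/2$, the trivial extension by zero lies in $X^{s,p,\beta}(\ren)$: after removing the vanishing regions, the only nontrivial contribution to the double integral on $\ren\times\ren$ is
\begin{equation*}
\int_{\mathrm{supp}\,w_2}\int_{\ren\setminus\mathrm{supp}\,w_2}\frac{|w_2(x)|^p}{|x-y|^{N+ps}|x|^{\beta}|y|^{\beta}}\,dx\,dy,
\end{equation*}
which is controlled by a constant depending on $\delta$ times $\|w_2\|_{X^{s,p,\beta}(\Omega)}^p$ using the uniform lower bound on $|x-y|$ off the supports and the local integrability of $|y|^{-\beta}$ together with the decay of the kernel at infinity. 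Setting $\tilde w:=\widetilde{w_1}+\widetilde{w_2}$ and applying the triangle inequality yields the claim.

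The main obstacle is not the unweighted reflection/extension step, which is textbook, but rather keeping $\widetilde{w_1}$ inside a region where the weight remains bounded (arranged via the extra cut-off supported in $N_{2\delta}$) and verifying that the tail contributions for $w_2$ respect the singular weight at the origin; both points reduce to exploiting $d_0>0$ to decouple the weight's singular locus from the boundary geometry along which the reflection-type extension is performed.
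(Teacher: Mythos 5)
The paper offers no proof of this lemma; it merely cites Adams and Di Nezza--Palatucci--Valdinoci for the unweighted fractional extension theorem, so your task is precisely to supply the reduction to that result, and your cut--off decomposition exploiting $d_0=\dist(0,\partial\Omega)>0$ is the natural way to do it. The overall strategy is sound, but two steps are stated in a way that would not survive scrutiny as written. For the trivial extension $\widetilde{w}_2$ you bound the cross term by invoking ``the uniform lower bound on $|x-y|$ off the supports,'' yet $\ren\setminus\mathrm{supp}\,w_2$ contains $\Omega\setminus\mathrm{supp}\,w_2$, where $|x-y|$ can be arbitrarily small. The correct split is $y\in\Omega$ versus $y\in\ren\setminus\Omega$: for $y\in\Omega\setminus\mathrm{supp}\,w_2$ the integrand is literally part of the $\Omega\times\Omega$ weighted Gagliardo integral of $w_2$, already controlled by $\|w_2\|_{X^{s,p,\beta}(\Omega)}$, while the separation $|x-y|\ge\delta/2$ is valid only for $y\notin\Omega$, which is where you then also use the local integrability of $|y|^{-\beta}$ and the decay of the kernel.

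Likewise, the asserted equivalence of $\|w_1\|_{X^{s,p,\beta}}$ with $\|w_1\|_{W^{s,p}}$ is not a consequence of pointwise comparability of the weight across the double integral: for pairs $(x,y)$ with, say, $x$ near $0$, the factor $|x|^{-\beta}$ is unbounded even though $w_1$ vanishes there. What actually holds is a two--sided \emph{norm} comparison. One direction is trivial on a bounded set since $|x|^\beta\le\mathrm{diam}(\Omega)^\beta$ gives $\|w_1\|_{W^{s,p}(\Omega)}\le C\,\|w_1\|_{X^{s,p,\beta}(\Omega)}$. The reverse direction, applied to $\chi\widetilde{w_1}$ on $\ren$, requires checking that the cross contribution from $x$ near $0$ (where $\chi\widetilde{w_1}(x)=0$) is finite; this uses that $|x-y|$ is bounded below by the distance between $\{|z|<c\}$ and $\mathrm{supp}(\chi\widetilde{w_1})\subset N_{2\delta}$, and that $|x|^{-\beta}$ is locally integrable because $\beta<N$. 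With those two clarifications, plus the routine verification that the cut--off multiplication estimate respects the weight (using that $\eta(x)-\eta(y)\ne 0$ forces $x$ or $y$ into $N_\delta$, hence away from $0$), the argument closes.
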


\begin{remark}\label{equiv}
As in the case $\beta=0$, if $\O$ is bounded regular domain, we
can endow $X^{s,p,\beta}_0(\O)$ with the equivalent norm
$$
|||\phi|||_{X^{s,p,\beta}_0(\O)}=
\Big(\dint_{\O}\dint_{\O}\dfrac{|\phi(x)-\phi(y)|^p}{|x-y|^{N+ps}}\dfrac{dxdy}{|x|^\beta|y|^\beta}\Big)^{\frac
1p}.
$$
\end{remark}
Now, for $w\in X^{s,p,\beta}(\ren)$, we set
$$
L_{s,p,\beta}(w)(x)=\mbox{ P.V. }
\dint_{\ren}\dfrac{|w(x)-w(y)|^{p-2}(w(x)-w(y))}{|x-y|^{N+ps}}\dfrac{dy}{|x|^\beta|y|^\beta}.
$$
It is clear that for all $w, v\in X^{s,p,\beta}(\ren)$, we have
$$
\langle L_{s,p,\beta}(w),v\rangle
=\dint_{\ren}\dint_{\ren}\dfrac{|w(x)-w(y)|^{p-2}(w(x)-w(y))(v(x)-v(y))}{|x-y|^{N+ps}}\dfrac{dxdy}{|x|^\beta|y|^\beta}.
$$
In the case where $\beta=0$, we denote $L_{s,p,\beta}$ by
$L_{s,p}$.

\

Let begin by proving the next version of the Picone inequality.
\begin{Lemma}(Picone inequality)\label{pic}
Let $w\in X^{s,p,\beta}_0(\O)$ be such that $w>0$ in $\O$. Assume
that $L_{s,p,\beta}(w)= \nu$ with $\nu\in L^1_{loc}(\ren)$ and
$\nu\gneqq 0$, then for all $u\in \mathcal{C}^\infty_0(\O)$, we
have
$$
\frac 12
\dint\dint_{Q}\dfrac{|u(x)-u(y)|^{p}}{|x-y|^{N+ps}}\dfrac{dxdy}{|x|^\beta|y|^\beta}\ge
\langle L_{s,p,\beta} w,\frac{|u|^p}{w^{p-1}}\rangle.
$$
\end{Lemma}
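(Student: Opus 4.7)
\emph{Overall strategy.} I would use the standard two-step route for nonlocal Picone-type inequalities: first prove a pointwise algebraic estimate in four real variables, then integrate it against the symmetric kernel $|x-y|^{-N-ps}|x|^{-\beta}|y|^{-\beta}$ over $Q$ and identify the right-hand side with the bilinear pairing $\langle L_{s,p,\beta}w,\,|u|^{p}/w^{p-1}\rangle$. The factor $1/2$ on the left is precisely the symmetrization factor produced when converting the symmetric double integral into the duality pairing as defined in the excerpt.

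\emph{The pointwise inequality and its integration.} The key algebraic fact I would establish is: for every $a,b\geq 0$ and $c,d>0$,
\begin{equation*}
|a-b|^{p}\;\geq\;|c-d|^{p-2}(c-d)\left(\frac{a^{p}}{c^{p-1}}-\frac{b^{p}}{d^{p-1}}\right).
\end{equation*}
This is the nonlocal discrete Picone inequality in the spirit of Brasco--Franzina, and it follows by expanding both sides and applying Young's inequality $\alpha^{p-1}\beta\leq\tfrac{p-1}{p}\alpha^{p}+\tfrac{1}{p}\beta^{p}$ to the cross terms, together with the convexity of $t\mapsto t^{p}$ for $p>1$. Specializing to $a=|u(x)|$, $b=|u(y)|$, $c=w(x)$, $d=w(y)$ and using the triangle inequality $|u(x)-u(y)|\geq \bigl||u(x)|-|u(y)|\bigr|$, this promotes to the pointwise estimate
\begin{equation*}
|u(x)-u(y)|^{p}\;\geq\;|w(x)-w(y)|^{p-2}(w(x)-w(y))\left(\frac{|u(x)|^{p}}{w(x)^{p-1}}-\frac{|u(y)|^{p}}{w(y)^{p-1}}\right).
\end{equation*}
Multiplying by $|x-y|^{-N-ps}|x|^{-\beta}|y|^{-\beta}$ and integrating over $Q$, the left-hand side becomes the weighted Gagliardo energy, while by the very formula for the pairing recalled in the excerpt the right-hand side is $\langle L_{s,p,\beta}w,\,|u|^{p}/w^{p-1}\rangle$. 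The $1/2$ in the statement then absorbs the symmetrization factor between $\int L_{s,p,\beta}(w)\,v\,dx$ and the symmetric double integral that the paper uses to define the pairing.

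\emph{Main obstacle.} The genuine difficulty is not the pointwise estimate (which is elementary once the correct algebraic identity is chosen) but the admissibility of the test function $|u|^{p}/w^{p-1}$: since $w\in X^{s,p,\beta}_{0}(\Omega)$ is only assumed positive in $\Omega$, it can vanish near $\partial\Omega$ or blow up near the origin, so $|u|^{p}/w^{p-1}$ is not a priori regular enough for the pairing to make sense or for the pointwise manipulation to be integrable. I would handle this by first applying the pointwise inequality with $w$ replaced by $w_{\varepsilon}:=w+\varepsilon$, for which $|u|^{p}/w_{\varepsilon}^{p-1}$ is bounded and compactly supported in $\Omega$; the integrated inequality then holds verbatim for $w_{\varepsilon}$, and I would pass to the limit $\varepsilon\downarrow 0$ by Fatou's lemma on the right, controlling the tails by the compact support of $u$ in $\Omega$. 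This is exactly where the hypothesis $\nu=L_{s,p,\beta}(w)\in L^{1}_{\mathrm{loc}}$ with $\nu\gneqq 0$ enters: it guarantees that the limiting pairing is finite, nonnegative, and consistent with the principal value definition of $L_{s,p,\beta}$.
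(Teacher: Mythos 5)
Your proposal is correct and follows essentially the same route as the paper: both proofs rest on a pointwise algebraic (discrete Picone) inequality — yours in the four-variable Brasco--Franzina form $|a-b|^{p}\geq|c-d|^{p-2}(c-d)\bigl(a^{p}/c^{p-1}-b^{p}/d^{p-1}\bigr)$, the paper's in the equivalent normalized form of its Lemma \ref{alg}, $|a-t|^{p}\geq(1-t)^{p-1}(|a|^{p}-t)$ with $t=w(y)/w(x)$ — followed by integration against the symmetric kernel and a symmetrization step that produces the factor $\tfrac12$. The one place you go beyond the paper is the regularization $w_{\varepsilon}=w+\varepsilon$ and the monotone/Fatou passage to the limit to make the test function $|u|^{p}/w^{p-1}$ admissible; the paper simply asserts the claim (``using inequality \eqref{alg1}, the claim follows at once'') without addressing the possible degeneracy of $w$ near $\partial\Omega$, so your added step tightens the argument rather than changing it. One small note: since $w_{\varepsilon}(x)-w_{\varepsilon}(y)=w(x)-w(y)$, your regularized inequality keeps the same first factor, and because $|u|^{p}/w_{\varepsilon}^{p-1}\uparrow|u|^{p}/w^{p-1}$ as $\varepsilon\downarrow0$, monotone convergence against $\nu\geq0$ is the cleanest way to conclude.
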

\begin{proof}
The case $\beta=0$ is obtained in \cite{PP} if $p=2$ and
in \cite{BPV} if $p\neq2$. For the reader convenience we include
some details for the general case $\beta\neq 0$.

We set $v(x)=\dfrac{|u(x)|^p}{|w(x)|^{p-1}}$ and
$k(x,y)=\dfrac{1}{|x-y|^{N+ps}|x|^{\beta}|y|^{\beta}}$, then
$$
\begin{array}{rcl}
\langle L_{s,p,\beta}(w(x)),v(x)\rangle&=&
\dint_{\O}v(x)\dint_{\ren}|w(x)-w(y)|^{p-2}(w(x)-w(y))k(x,y)
\,dy\,dx\\  \\
&=&\dint_{\O}\dfrac{|u(x)|^p}{|w(x)|^{p-1}}\dint_{\ren}|w(x)-w(y)|^{p-2}(w(x)-w(y))k(x,y) \,dy\,dx.\\
\end{array}
$$
Since $k$ is symmetric, we obtain that
$$
\begin{array}{rcl}
&\langle L_{s,p,\beta}(w(x)),v(x)\rangle=\\  \\
& \dfrac 12
\dint\dint_{Q}\left(\dfrac{|u(x)|^p}{|w(x)|^{p-1}}-\dfrac{|u(y)|^p}{|w(y)|^{p-1}}\right)|w(x)-w(y)|^{p-2}(w(x)-w(y))k(x,y) \,dy\,dx.\\
\end{array}
$$      ²²
Let $v_1=\dfrac{u}{w}$, then
$$
\begin{array}{rcl}
&\langle L_{s,p,\beta}(w(x)),v(x)\rangle=\\  \\
&\dfrac{1}{2}\dint\dint_{Q}\left(|v_1(x)|^p w(x)-|v_1(y)|^p
w(y)\right)|w(x)-w(y)|^{p-2}(w(x)-w(y))k(x,y) \,dy\,dx.
\end{array}
$$
Define $$\Phi(x,y)=|u(x)-u(y)|^p-\left(|v_1(x)|^p w(x)-|v_1(y)|^p
w(y)\right)|w(x)-w(y)|^{p-2}(w(x)-w(y)), $$ then
$$
\begin{array}{rcl}
&\langle
L_{s,p,\beta}(w(x)),v(x)\rangle+\dfrac{1}{2}\dint\limits_{Q}\Phi(x,y)k(x,y)
\,dy\,dx\\
&=\dfrac{1}{2}\dint\dint_{Q}|u(x)-u(y)|^p k(x,y) \,dy\,dx.
\end{array}
$$
We claim that $\Phi\geq 0$. It is clear that, by a symmetry
argument, we can assume that $w(x)\geq w(y)$. Let $t=w(y)/w(x),
a=u(x)/u(y)$, then using inequality \eqref{alg1}, the claim
follows at once. Hence we conclude.
\end{proof}

As a consequence, for $\beta=0$, we have the next comparison
principle that extends, to the fractional framework, the classical
one obtained by Brezis-Kamin in \cite{BK}.

\begin{Lemma}\label{compaa}
Let $\O$ be  a bounded domain and let $f$ be a nonnegative
continuous function such that $f(\s)>0$ if $\s>0$ and
$\dfrac{f(\s)}{\s^{p-1}}$ is decreasing. Let $u,v\in
W^{s,p}_0(\O)$ be such that $u,v>0$ in $\O$ and
$$\left\{
\begin{array}{rcl}
L_{s,p} u &\geq & f(u)\mbox{  in  }\O,\\
L_{s,p} v & \le & f(v)\mbox{  in   }\O,
\end{array}
\right.
$$
Then, $u\geq v\inn\Omega$.
\end{Lemma}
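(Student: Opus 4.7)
The plan is to carry over the classical Brezis--Kamin comparison scheme to the nonlocal $p$-setting by testing the two differential inequalities against a pair of cleverly chosen nonnegative test functions and subtracting. After a preliminary truncation $u\mapsto u+\varepsilon$, $v\mapsto v+\varepsilon$ to avoid division by zero (and sending $\varepsilon\to 0$ at the end), I would test the supersolution inequality $L_{s,p}u\ge f(u)$ with $\phi_1=(v^p-u^p)_+/u^{p-1}$ and the subsolution inequality $L_{s,p}v\le f(v)$ with $\phi_2=(v^p-u^p)_+/v^{p-1}$. Both functions are nonnegative, supported on $\{v>u\}$, and admissible elements of $W^{s,p}_0(\O)$.

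Subtracting the two tested inequalities yields, on the right-hand side, the quantity
$$
\int_{\O}\Big(\frac{f(v)}{v^{p-1}}-\frac{f(u)}{u^{p-1}}\Big)(v^p-u^p)_+\,dx,
$$
which is nonpositive by the assumption that $\sigma\mapsto f(\sigma)/\sigma^{p-1}$ is decreasing, and strictly negative on any set of positive measure where $v>u$. The left-hand side equals $\langle L_{s,p}v,\phi_2\rangle-\langle L_{s,p}u,\phi_1\rangle$, and rewriting it as a double integral over $Q$ with kernel $|x-y|^{-(N+ps)}$ reduces the needed nonnegativity to a pointwise discrete algebraic inequality of nonlocal Picone type, valid for any positive scalars $a_1,a_2,b_1,b_2$, in which the $p$-power nonlinearity applied to $b_1-b_2$ and $a_1-a_2$ is paired with the quotients $(b_i^p-a_i^p)_+/b_i^{p-1}$ and $(b_i^p-a_i^p)_+/a_i^{p-1}$. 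This pointwise inequality is the natural extension of the convexity estimate driving Lemma \ref{pic}, and I would prove it by a case-by-case comparison (depending on the signs of $b_i-a_i$ at the two points $x$ and $y$) using the elementary inequality \eqref{alg1}.

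Combining the two bounds forces both sides of the subtracted inequality to vanish, and the strict sign of the right-hand side on $\{v>u\}$ then yields $|\{v>u\}|=0$, i.e.\ $u\ge v$ in $\O$. The main obstacle is the pointwise algebraic inequality underlying the left-hand side: for $p=2$ it reduces to an identity and the proof is immediate, but for general $p>1$ it requires a delicate case analysis, in particular a careful handling of the sign of $v^p-u^p$ at $x$ versus at $y$ when these signs differ. A secondary, more technical point is the justification that $\phi_1,\phi_2$ are valid test functions in the weak formulations, handled by the regularization $u+\varepsilon$, $v+\varepsilon$ and a standard dominated-convergence passage to the limit.
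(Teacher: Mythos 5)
Your proposal is correct and follows essentially the same route as the paper: test the two inequalities against $\phi_1=(v^p-u^p)_+/u^{p-1}$ and $\phi_2=(v^p-u^p)_+/v^{p-1}$, subtract, and control the resulting nonlocal difference by a discrete Picone-type estimate, while the right-hand side is signed by the monotonicity of $f(\sigma)/\sigma^{p-1}$. The case split you flag at points where $v^p-u^p$ changes sign is indeed the delicate step (one the paper's writeup silently elides by writing $\xi=v^p-u^p$), but it closes: when $\xi>0$ at both points, adding two instances of the pointwise bound from Lemma~\ref{pic} gives the nonnegativity, and in the mixed case it reduces to the monotonicity of $t\mapsto|1-t|^{p-2}(1-t)$ together with $v(y)/v(x)<u(y)/u(x)$.
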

\begin{proof} Using an approximation argument, taking in consideration that $u,v>0$, we can prove that
\begin{equation}\label{nesr}
\dfrac{L_{s,p}u}{u^{p-1}}-\dfrac{L_{s,p}v}{v^{p-1}}\geq
\Big(\dfrac{f(u)}{u^{p-1}}-\dfrac{f(v)}{v^{p-1}}\Big).
\end{equation}
We set $\xi=(v^p-u^p)_{+}$, then
\begin{equation}\label{tlem}
\io(\dfrac{f(u)}{u^{p-1}}-\dfrac{f(v)}{v^{p-1}})\xi\,dx\leq
\int_{\O}\xi\Big(\dfrac{L_{s,p}u}{u^{p-1}}-\dfrac{L_{s,p}v}{v^{p-1}}\Big)\,dx.
\end{equation}
Let us analyze each term in the previous inequality.

Using the definition of $\xi$ we obtain that
$\Big(\dfrac{f(u)}{u^{p-1}}-\dfrac{f(v)}{v^{p-1}}\Big)\xi\ge 0$.
On the other hand, we have
\begin{eqnarray*}
&J\equiv
\dyle\io\xi\Big(\dfrac{L_{s,p}u}{u^{p-1}}-\dfrac{L_{s,p}v}{v^{p-1}}\Big)\,dx=\dfrac
12\dint\dint_{Q}
\dfrac{|u(x)-u(y)|^{p-2}(u(x)-u(y))}{|x-y|^{N+ps}}\Big(\dfrac{\xi(x)}{u^{p-1}(x)}-\dfrac{\xi(y)}{u^{p-1}(y)}\Big)\,dxdy\\
&-\dfrac 12\dint\dint_{Q} \dfrac{|v(x)-v(y)|^{p-2}(v(x)-v(y))}{|x-y|^{N+ps}}\Big(\dfrac{\xi(x)}{v^{p-1}(x)}-\dfrac{\xi(y)}{v^{p-1}(y)}\Big)\,dxdy,\\
\end{eqnarray*}
where $Q=\ren\times \ren\setminus (\mathcal{C}\O\times
\mathcal{C}\O$).

Notice that
$$
\begin{array}{lll}
&|u(x)-u(y)|^{p-2}(u(x)-u(y))\Big(\dfrac{\xi(x)}{u^{p-1}(x)}-\dfrac{\xi(y)}{u^{p-1}(y)}\Big)=\\
&|u(x)-u(y)|^{p-2}(u(x)-u(y))\Big(\dfrac{v^p(x)}{u^{p-1}(x)}-\dfrac{v^p(y)}{u^{p-1}(y)}\Big)\\
&-|u(x)-u(y)|^p.
\end{array}
$$
In the same way, we obtain that
$$
\begin{array}{lll}
&|v(x)-v(y)|^{p-2}(v(x)-v(y))\Big(\dfrac{\xi(x)}{v^{p-1}(x)}-\dfrac{\xi(y)}{v^{p-1}(y)}\Big)=\\
&-|v(x)-v(y)|^{p-2}(v(x)-v(y))\Big(\dfrac{u^p(x)}{v^{p-1}(x)}-\dfrac{u^p(y)}{v^{p-1}(y)}\Big)\\
&+|v(x)-v(y)|^p.
\end{array}
$$
Thus
$$
\begin{array}{lll}
J &=
&\dyle\dint\dint_{Q}\frac{|u(x)-u(y)|^{p-2}(u(x)-u(y))}{|x-y|^{N+ps}}\Big(\dfrac{v^p(x)}{u^{p-1}(x)}-\dfrac{v^p(y)}{u^{p-1}(y)}\Big)dxdy\\
&+&
\dyle\dint\dint_{Q}\frac{|v(x)-v(y)|^{p-2}(v(x)-v(y))}{|x-y|^{N+ps}}\Big(\dfrac{u^p(x)}{v^{p-1}(x)}-\dfrac{u^p(y)}{v^{p-1}(y)}\Big)dxdy\\
&-&
\dyle\dint\dint_{Q}\frac{|u(x)-u(y)|^p}{|x-y|^{N+ps}}-\dint\dint_{Q}\frac{|v(x)-v(y)|^p}{|x-y|^{N+ps}}dxdy\\
&=& \dyle \frac 12\io\dfrac{L_{p,s}(u)}{u^p}v^{p}dx+\frac
12\io\dfrac{L_{p,s}(v)}{v^{{p-1}}}u^p dx-
\dint\dint_{Q}\dfrac{|u(x)-u(y)|^p}{|x-y|^{N+ps}}dxdy-\dint\dint_{Q}\dfrac{|v(x)-v(y)|^p}{|x-y|^{N+ps}}dxdy.
\end{array}
$$
Now, using Picone's inequality, we conclude that $J\le 0$. Thus
$$
\Big(\dfrac{f(u)}{u^{p-1}}-\dfrac{f(v)}{v^{p-1}}\Big)\xi\equiv 0
$$
and then $\xi=0$ which implies that $u\le v$ in $\O$.
\end{proof}

\begin{remark}
The comparison result holds if we replace $f(s)$ by $g(s,x)$ where
$g$ is continuous in $s$ for a.e $x\in \O$, $\dfrac{g(s,x)}{s}$ is
decreasing for $s>0$ and $g(s,x)>0$ in $\O$ for all $s>0$ fixed.
\end{remark}

In the sequel we need the next results.
\begin{Lemma}\label{ictp1}
Fix $0<\beta<\frac{N-ps}{2}$ and let $w(x)=|x|^{-\g}$ with
$0<\gamma<\dfrac{N-ps-2\beta}{p-1}$, then there exists a positive
constant $\L(\g)>0$ such that
\begin{equation}\begin{array}{rcl}
L_{s,p,\beta}(w)= \L(\g) \dfrac{w^{p-1}}{|x|^{ps+2\beta}}\:\:\:
a.e \mbox{ in }\ren\backslash\{0\}.
\end{array} \label{EgL2}
\end{equation}
\end{Lemma}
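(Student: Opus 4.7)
The plan is to exploit the scale invariance of the problem. The function $w(x)=|x|^{-\gamma}$ is positively homogeneous of degree $-\gamma$ and the kernel $|x-y|^{-(N+ps)}|x|^{-\beta}|y|^{-\beta}$ is jointly homogeneous, so both sides of the claimed identity should transform in the same way under scaling $x\mapsto \lambda x$. Concretely, in the integral defining $L_{s,p,\beta}(w)(x)$ I would make the substitution $y=|x|\sigma$, which gives $|y|=|x||\sigma|$, $|x-y|=|x||e_x-\sigma|$ with $e_x:=x/|x|$, $dy=|x|^N d\sigma$, and $w(x)-w(y)=|x|^{-\gamma}(1-|\sigma|^{-\gamma})$. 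All powers of $|x|$ factor out, yielding
\begin{equation*}
L_{s,p,\beta}(w)(x)=|x|^{-\gamma(p-1)-ps-2\beta}\,\text{P.V.}\!\int_{\ren}\frac{|1-|\sigma|^{-\gamma}|^{p-2}(1-|\sigma|^{-\gamma})}{|e_x-\sigma|^{N+ps}|\sigma|^{\beta}}\,d\sigma.
\end{equation*}
A rotation of the $\sigma$-variable shows that the remaining integral is independent of the direction $e_x$; calling its value $\Lambda(\gamma)$ gives the desired formula, since $w^{p-1}/|x|^{ps+2\beta}=|x|^{-\gamma(p-1)-ps-2\beta}$.

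Next I would verify that $\Lambda(\gamma)$ is a finite real number. The potentially singular regions are $\sigma=0$, $|\sigma|\to\infty$ and the sphere $|\sigma|=1$ (where the denominator $|e_x-\sigma|$ vanishes at $\sigma=e_x$). Near the origin the integrand is $O(|\sigma|^{-\gamma(p-1)-\beta})$, which is integrable because the hypothesis $\gamma<\frac{N-ps-2\beta}{p-1}$ forces $\gamma(p-1)+\beta<N-ps-\beta<N$. At infinity the integrand is $O(|\sigma|^{-N-ps-\beta})$, also integrable. At $\sigma=e_x$ a Taylor expansion gives $1-|\sigma|^{-\gamma}=\gamma(|\sigma|-1)+O((|\sigma|-1)^{2})$, so the singularity is of the same order as the one appearing when $L_{s,p}$ acts on a smooth function, and is handled by the principal value.

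The heart of the proof is the positivity of $\Lambda(\gamma)$. Passing to spherical coordinates $\sigma=r\omega$, I would write
\begin{equation*}
\Lambda(\gamma)=\text{P.V.}\!\int_0^\infty g(r)\,r^{N-1-\beta}\,K(r)\,dr,\qquad g(r):=|1-r^{-\gamma}|^{p-2}(1-r^{-\gamma}),
\end{equation*}
with $K(r):=\int_{\SN}|e_1-r\omega|^{-(N+ps)}\,d\omega$. A short direct computation (already used in \cite{FS}) yields the functional equation $K(1/r)=r^{N+ps}K(r)$. Splitting the radial integral at $r=1$, applying the change of variable $r\mapsto 1/s$ on the outer piece, and using $g(1/s)=-s^{\gamma(p-1)}g(s)$ for $s\in(0,1)$, one recombines the two pieces into
\begin{equation*}
\Lambda(\gamma)=\int_0^1 g(r)\,K(r)\,\bigl[r^{N-1-\beta}-r^{\gamma(p-1)+ps+\beta-1}\bigr]\,dr.
\end{equation*}
Since $g(r)<0$ for $r\in(0,1)$, one has $\Lambda(\gamma)>0$ if and only if the bracket is negative on $(0,1)$, and this (using $r<1$) is equivalent to $N-1-\beta>\gamma(p-1)+ps+\beta-1$, which is exactly the hypothesis $\gamma(p-1)<N-ps-2\beta$.

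The main technical difficulty will be the bookkeeping of the principal value under the passage to radial coordinates, because $K(r)$ is itself singular at $r=1$. The cleanest approach is to excise a symmetric ball $\{|\sigma-e_1|<\varepsilon\}$ before doing any change of variables, perform all the manipulations on the regularised integral (where they are elementary), and only at the end let $\varepsilon\to 0$, exploiting the cancellation coming from $1-|\sigma|^{-\gamma}\sim -\gamma(|\sigma|-1)$ near the unit sphere.
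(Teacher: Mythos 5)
Your proposal is correct and follows essentially the same route as the paper: reduce $L_{s,p,\beta}(w)$ to a one--dimensional radial integral by scaling out $|x|$, invoke the functional equation $K(1/r)=r^{N+ps}K(r)$, fold the radial integral across $r=1$, and read off both finiteness and the sign of $\Lambda(\gamma)$ from the hypothesis $\gamma(p-1)<N-ps-2\beta$. The only (cosmetic) differences are that you change variables $y=|x|\sigma$ and then pass to spherical coordinates rather than going straight to polar variables, and your folded integral lives on $(0,1)$ instead of $(1,\infty)$, which is the same identity after $r\mapsto 1/r$.
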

\begin{proof} We set $r=|x|$ and $\rho=|y|$, then $x=rx', y=\rho y'$
where $|x'|=|y'|=1$. Thus
$$
\begin{array}{rcl}
L_{s,p,\beta}(w)&=& \dfrac{1}{|x|^{\beta}}
\dint\limits_0^{+\infty}|r^{-\gamma}-\rho^{-\gamma}|^{p-2}\dfrac{(r^{-\gamma}-\rho^{-\gamma})\rho^{N-1}}{\rho^{\beta}
r^{N+ps}}\left(
\dint\limits_{|y'|=1}\dfrac{dH^{n-1}(y')}{|x'-\frac{\rho}{r}
y'|^{N+ps}} \right) \,d\rho.
\end{array}
$$
Let $\sigma=\dfrac{\rho}{r}$, then
$$
L_{s,p,\beta}(w)=\dfrac{w^{p-1}(x)}{|x|^{ps+2\beta}}\dint\limits_0^{+\infty}
|1-\sigma^{-\gamma}|^{p-2}(1-\sigma^{-\gamma})\sigma^{N-\beta-1}
\left(\dint\limits_{|y'|=1}\dfrac{dH^{n-1}(y')}{|x'-\s y'|^{N+ps}}
\right) \,d\sigma.
$$
Defining
$$
K(\s)=\dint\limits_{|y'|=1}\dfrac{dH^{n-1}(y')}{|x'-\s
y'|^{N+ps}},
$$
as in \cite{FV}, we obtain that
\begin{equation}\label{kkk}
K(\sigma)=2\frac{\pi^{\frac{N-1}{2}}}{\Gamma(\frac{N-1}{2})}\int_0^\pi
\frac{\sin^{N-2}(\theta)}{(1-2\sigma \cos
(\theta)+\sigma^2)^{\frac{N+ps}{2}}}d\theta.
\end{equation}
Hence
$$
L_{s,p,\beta}(w)
=\dfrac{w^{p-1}(x)}{|x|^{ps+2\beta}}\dint\limits_0^{+\infty}
\psi(\sigma)\,d\sigma,
$$
with \begin{equation}\label{psi0}
 \psi(\sigma)=
|1-\sigma^{-\gamma}|^{p-2}(1-\sigma^{-\gamma})\sigma^{N-\beta-1}
K(\sigma). \end{equation} Define $\L(\g)\equiv
\dint\limits_0^{+\infty} \psi(\sigma)\,d\sigma$, then to finish we just have to show that $0<\L(\g)<\infty$.

We have
$$
\L(\gamma)=\int_0^1\psi(\sigma)\,d\sigma+\int_1^\infty
\psi(\sigma)\,d\sigma=I_1+I_2.
$$
Notice that $K(\frac{1}{\xi})=\xi^{N+ps}K(\xi)$ for any $\xi>0$,
then using the change of variable $\xi=\frac{1}{\sigma}$ in $I_1$,
there results that
\begin{equation}\label{ga}
\L(\g)=\dint\limits_1^{+\infty}K(\sigma)(\sigma^\g-1)^{p-1}\left(\sigma^{N-1-\beta-\g(p-1)}-\sigma^{\beta+ps-1}\right)\,d\sigma.
\end{equation}
As $\s\to \infty$, we have $$
K(\sigma)(\sigma^\g-1)^{p-1}\left(\sigma^{N-1-\beta-\g(p-1)}-\sigma^{\beta+ps-1}\right)\backsimeq
\s^{-1-\beta-ps}\in L^1(2,\infty).$$ Now, as, $\s\to 1$, we have
$$
K(\sigma)(\sigma^\g-1)^{p-1}\left(\sigma^{N-1-\beta-\g(p-1)}-\sigma^{\beta+ps-1}\right)\backsimeq
(\s-1)^{p-1-ps}\in L^1(1,2).$$ Therefore, combining the above
estimates, we get $|\L(\g)|<\infty$. Now, using the fact that
$0<\gamma<\dfrac{N-ps-2\beta}{p-1}$, from \eqref{ga}, we reach
that $\L(\g)>0$.

As a conclusion, we have proved that
$$
L_{s,p,\beta}(w)= \L(\g)\dfrac{w^{p-1}}{|x|^{ps+2\beta}}\:\:a.e.
\mbox{ in }\ren\backslash\{0\}.
$$
Hence the result follows.
\end{proof}
As a consequence we have the following weighted Hardy inequality.
\begin{Theorem}\label{HHH}
Let $\b<\frac{N-ps}{2},$ then for all $u \in
\mathcal{C}_0^\infty(\ren)$, we have
\begin{equation}
\begin{array}{rcl}
2\L(\g)\dint\limits_{\re^N} \dfrac{|u(x)|^p}{|x|^{ps+2\beta}}\,dx
&\leq&\dint\limits_{\re^N}\dint\limits_{\re^N}
\dfrac{|u(x)-u(y)|^p}{|x-y|^{N+ps}}\dfrac{dx}{|x|^{\beta}}
\dfrac{dy}{|y|^\beta},
\end{array} \label{IngL1}
\end{equation}
where $\L(\g)$ is defined in \eqref{ga}.
\end{Theorem}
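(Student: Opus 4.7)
\emph{Strategy.} The natural approach is to combine the Picone inequality (Lemma~\ref{pic}) with the explicit ground-state identity (Lemma~\ref{ictp1}). Given $u\in\mathcal{C}_0^\infty(\re^N)$, I plan to fix any $\gamma\in(0,\frac{N-ps-2\beta}{p-1})$ and take as candidate ground state $w(x)=|x|^{-\gamma}$, for which Lemma~\ref{ictp1} yields $L_{s,p,\beta}(w)=\Lambda(\gamma)\,w^{p-1}/|x|^{ps+2\beta}$ a.e.\ in $\re^N\setminus\{0\}$, with $\Lambda(\gamma)>0$.

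\emph{Main computation.} Once Picone is available for this $w$, the inequality reduces to a one-line calculation: inserting the identity of Lemma~\ref{ictp1} into the conclusion of Lemma~\ref{pic} gives
$$
\frac12\dint\dint_{\re^N\times \re^N}\dfrac{|u(x)-u(y)|^p}{|x-y|^{N+ps}}\dfrac{dx\,dy}{|x|^\beta|y|^\beta}\geq \dint_{\re^N} L_{s,p,\beta}(w)\,\dfrac{|u|^p}{w^{p-1}}\,dx = \Lambda(\gamma)\dint_{\re^N}\dfrac{|u|^p}{|x|^{ps+2\beta}}\,dx,
$$
and multiplying by $2$ is exactly \eqref{IngL1}. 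Replacing the integration set $Q$ from Lemma~\ref{pic} by all of $\re^N\times \re^N$ is harmless because $u$ has compact support, so the integrand on the left vanishes on $\mathcal{C}\Omega\times \mathcal{C}\Omega$ for any bounded $\Omega\supset \sop u$.

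\emph{Handling the non-admissible weight.} The delicate point is that $w(x)=|x|^{-\gamma}$ does not belong to $X^{s,p,\beta}_0(\Omega)$, so Lemma~\ref{pic} cannot be invoked verbatim. To get around this, for each $n\geq 1$ I would set $w_n(x)=\min\{|x|^{-\gamma},n\}\,\zeta_n(x)$ where $\zeta_n\in \mathcal{C}_0^\infty(\re^N)$ is a smooth cutoff equal to $1$ on $B_n$ and supported in $B_{2n}$. Each $w_n$ is bounded, compactly supported and strictly positive on $\sop u$ for $n$ large, hence lies in $X^{s,p,\beta}_0(B_{3n})$, so Lemma~\ref{pic} applies to $(w_n,u)$. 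One then passes to the limit $n\to\infty$ via Fatou's lemma on the right-hand side.

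\emph{Main obstacle.} The hard part is verifying the pointwise convergence $L_{s,p,\beta}(w_n)(x)/w_n(x)^{p-1}\to \Lambda(\gamma)|x|^{-ps-2\beta}$ on $\sop u$: the contributions to $L_{s,p,\beta}(w_n)(x)$ coming from the regions $\{|y|>n\}$ and $\{|x|^{-\gamma}>n\}$ must be shown to vanish uniformly for $x$ in a fixed compact set away from the origin. A crude bound $|w_n(x)-w_n(y)|^{p-1}\leq 2^{p-1}w(x)^{p-1}$ combined with the decay $\int_{|y|>n}|x-y|^{-N-ps}|y|^{-\beta}\,dy=O(n^{-ps-\beta})$ should close the argument; once this is in place, the outline above delivers \eqref{IngL1}.
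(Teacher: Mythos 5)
Your proof follows exactly the same route as the paper: apply the Picone inequality of Lemma~\ref{pic} with the ground state $w(x)=|x|^{-\gamma}$ and insert the identity $L_{s,p,\beta}(w)=\Lambda(\gamma)w^{p-1}|x|^{-ps-2\beta}$ from Lemma~\ref{ictp1}. In fact the paper simply invokes Lemma~\ref{pic} for this $w$ without commenting on the fact that $|x|^{-\gamma}\notin X^{s,p,\beta}_{0}$ (it fails to be $p$-integrable against the weight at infinity), so the truncation-plus-Fatou step you add is not a digression from the paper's argument but a rigorous justification of a point the paper leaves implicit.
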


\begin{proof}
Let $u \in \mathcal{C}_0^\infty(\ren)$ and $w(x)=|x|^{-\g}$ with
$\g<\dfrac{N-ps-2\beta}{p-1}$. By Lemma \ref{ictp1}, we have
$$
L_{p,s,\beta}(w)=\L(\g)\dfrac{w^{p-1}}{|x|^{ps+2\beta}}.
$$
It is clear that $\dfrac{w^{p-1}}{|x|^{ps+2\beta}}\in
L^1_{loc}(\ren)$. Thus using Picone inequality in Lemma \ref{pic},
it follows that
$$
\dfrac 12 \dint_{\ren}\dint_{\ren}
\dfrac{|u(x)-u(y)|^p}{|x-y|^{N+ps}}\dfrac{dx}{|x|^{\beta}}
\dfrac{dy}{|y|^{\beta}}\ge \langle L_{p,s,\b}
w,\frac{|u|^p}{w^{p-1}}\rangle=\L(\g)\dint_{\ren}
\dfrac{|u(x)|^p}{|x|^{ps+2\beta}}\,dx.$$ Thus we conclude.
\end{proof}
\begin{remark}
Let analyze the behavior of the constant $\L(\g)$ in inequality
\eqref{IngL1}. Recall that, for $\g<\dfrac{N-ps-2\beta}{p-1}$,
$$
\L(\g)=\dint\limits_1^{+\infty}K(\sigma)(\sigma^\g-1)^{p-1}\left(\sigma^{N-1-\beta-\g(p-1)}-\sigma^{\beta+ps-1}\right)\,d\sigma,
$$
then
$$
\L'(\g)=(p-1)\dint\limits_1^{+\infty}K(\sigma)\log(\s)(\sigma^\g-1)^{p-2}\left(\sigma^{N-1-\beta-\g(p-1)}-\sigma^{\beta+ps+\g-1}\right)\,d\sigma.
$$
It is clear that if $\g_0=\frac{N-\beta-ps}{p}$, then
$\L'(\g_0)=0$, $\L'(\g)>0$ if $\g<\g_0$ and $\L'(\g)<0$ if
$\g>\g_0$. Thus
$$
\max_{\{0<\g<\frac{N-ps-2\beta}{p-1}\}}\L(\g)=\L(\g_0).
$$
Hence
\begin{equation}\label{TRT00}
\dint_{\ren}\dint_{\ren}
\dfrac{|u(x)-u(y)|^p}{|x-y|^{N+ps}}\dfrac{dx}{|x|^{\beta}}
\dfrac{dy}{|y|^{\beta}}\ge 2\L(\g_0)\dint_{\ren}
\dfrac{|u(x)|^p}{|x|^{ps+2\beta}}\,dx.\end{equation} Notice that
for $\beta=0$, then $2\L(\g_0)=2\L(\frac{N-ps}{p})\equiv
\L_{N,p,s}$ given in \eqref{LL}. Therefore, we have the next
optimality result.
\end{remark}
\begin{Theorem}\label{YYY}
Define
$$ \L_{N,p,s,\g}=\inf_{\{\phi\in \mathcal{C}^\infty_0(\ren)\backslash
0\}}\dfrac{\dint_{\re^N}\dint_{\re^N}
\dfrac{|\phi(x)-\phi(y)|^p}{|x-y|^{N+ps}|x|^\beta|y|^\beta}dxdy}{\dyle\int_{\ren}\dfrac{|\phi(x)|^p}{|x|^{ps+2\beta}}dx},
$$
then $\L_{N,p,s,\g}=2\L(\g_0)$.
\end{Theorem}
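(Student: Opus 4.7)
The inequality $\L_{N,p,s,\g} \geq 2\L(\g_0)$ follows immediately from Theorem \ref{HHH} together with the remark that precedes the statement. Indeed, for every admissible parameter $\g \in (0, (N-ps-2\b)/(p-1))$, Theorem \ref{HHH} gives $\L_{N,p,s,\g} \geq 2\L(\g)$; the explicit computation of $\L'(\g)$ shows that $\g_0 = (N-\b-ps)/p$ is the unique critical point and global maximum of $\L(\cdot)$ on this interval, so taking the supremum in $\g$ yields $\L_{N,p,s,\g} \geq 2\L(\g_0)$.

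For the reverse inequality, the plan is to exhibit a minimizing sequence by testing the Rayleigh quotient against truncated versions of the virtual ground state $w(x)=|x|^{-\g_0}$. Since $w\notin\mathcal{C}_0^\infty(\ren)$ (singular at the origin, non-compactly supported), I would take a radial cutoff $\eta_R\in\mathcal{C}_0^\infty(\ren)$ with $0\le\eta_R\le 1$, $\eta_R\equiv 1$ on the annulus $2/R \le |x|\le R/2$, and $\eta_R$ vanishing outside $1/R\le|x|\le R$, and set $\phi_R=\eta_R w\in\mathcal{C}_0^\infty(\ren)$. Using the scale invariance of the quotient, I expect that the ratio for $\phi_R$ converges to $2\L(\g_0)$ as $R\to\infty$.

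The motivation for this choice is Lemma \ref{ictp1}, which gives $L_{s,p,\b}(w) = \L(\g_0)\, w^{p-1}/|x|^{ps+2\b}$ pointwise on $\ren\setminus\{0\}$. Formally, using the symmetry of the kernel in the definition of $L_{s,p,\b}$, one has
$$\dint\dint \frac{|w(x)-w(y)|^p}{|x-y|^{N+ps}|x|^\b |y|^\b}\,dx\,dy \;=\; 2\dint w\cdot L_{s,p,\b}(w) \;=\; 2\L(\g_0)\dint \frac{w^p}{|x|^{ps+2\b}}\,dx,$$
so the Rayleigh quotient evaluated at $w$ itself is \emph{exactly} $2\L(\g_0)$. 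Both integrals diverge, but on the truncated $\phi_R$ they become finite with the denominator diverging like $R^\b$, $\log R$, or $R^{-\b}$ according to the sign of $\b$ (as one sees by a direct radial computation). To conclude, I would split the double integral in the numerator as the main contribution from the ``bulk'' $\{2/R\le|x|,|y|\le R/2\}$ (where $\phi_R\equiv w$) plus error terms from the transition annuli and the corresponding mixed regions.

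The main obstacle is controlling these error terms. The contribution from pairs $(x,y)$ in which one variable lies in a transition annulus (say $R/2\le |x|\le R$) and the other ranges over $\ren$ must be shown to be of lower order than the diverging leading term $2\L(\g_0)\int w^p/|x|^{ps+2\b}$. The natural approach is a dyadic decomposition of the annular regions combined with the explicit form \eqref{kkk} of $K(\s)$ and the scaling invariance of the quotient: on each dyadic piece one expects a bounded contribution, so summing gives an $O(1)$ or $O(\log R)$ lower-order error that is washed out by the divergent denominator. Once these estimates are in place one concludes that the Rayleigh quotient of $\phi_R$ tends to $2\L(\g_0)$, establishing the matching upper bound and hence the identity $\L_{N,p,s,\g}=2\L(\g_0)$.
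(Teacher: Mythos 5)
Your lower-bound argument matches the paper exactly: combine Theorem \ref{HHH} with the derivative computation in the Remark to see that $\L(\cdot)$ is maximized at $\g_0$, giving $\L_{N,p,s,\g}\ge 2\L(\g_0)$. (Minor flag: the paper's stated $\g_0=\frac{N-\b-ps}{p}$ should read $\g_0=\frac{N-2\b-ps}{p}$; setting the two exponents in $\L'$ equal gives $N-1-\b-\g(p-1)=\b+ps+\g-1$, hence $p\g_0=N-2\b-ps$, consistent with the $\b=0$ case.)

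For the upper bound, however, what you present is a plan, not a proof, and the crucial step is left open. You propose a two-sided cutoff $\phi_R=\eta_R w$ and a direct dyadic estimate of the contribution of the transition annuli to the numerator; you acknowledge this is "the main obstacle" but only state the hope that each dyadic block is bounded. That estimate is genuinely delicate (the kernel is singular on the diagonal, and the mixed region where one variable is in a transition shell and the other is far away involves the full tail of the kernel), and without carrying it out nothing is proved. You also misstate the denominator asymptotics: with the correct $\g_0$, the radial integrand in $\int |w|^p|x|^{-ps-2\b}\,dx$ is exactly $r^{-1}$, so the denominator of the truncated quotient diverges like $\log R$ for \emph{every} admissible $\b$, not like $R^{\pm\b}$; that the optimizing $\g_0$ produces precisely this scale-invariant borderline case is the whole point and should be made explicit.

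The paper avoids the error-term estimation entirely by a slicker route. It uses a one-sided truncation
$w_n$ (constant $1-n^{-\g_0}$ on $B_1$, equal to $|x|^{-\g_0}-n^{-\g_0}$ on $\{1\le|x|<n\}$, zero outside), which is a nondecreasing $1$-Lipschitz function of $w_0=|x|^{-\g_0}$. Then it tests the \emph{equation} $L_{s,p,\b}(w_0)=\L(\g_0)\,w_0^{p-1}/|x|^{ps+2\b}$ (Lemma \ref{ictp1}) against $w_n$, obtaining an exact identity for $\langle L_{s,p,\b}(w_0),w_n\rangle$. The numerator of the Rayleigh quotient is then controlled via the pointwise monotonicity inequality
$(w_n(x)-w_n(y))\,|w_0(x)-w_0(y)|^{p-2}(w_0(x)-w_0(y))\ge |w_n(x)-w_n(y)|^p$,
which holds because $w_n(x)-w_n(y)$ has the same sign as $w_0(x)-w_0(y)$ and is no larger in modulus. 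On the right-hand side, $\int w_nw_0^{p-1}|x|^{-ps-2\b}\,dx$ splits into $\int w_n^p|x|^{-ps-2\b}\,dx$ plus two nonnegative error terms $I_n,J_n$ that stay uniformly bounded, while $\int w_n^p|x|^{-ps-2\b}\,dx\to\infty$. This is cleaner and shorter than what you propose: the operator identity plus the Lipschitz-truncation inequality replaces the entire dyadic error analysis. If you want to salvage your plan, you would need to write out the transition-region estimates in detail; otherwise I would recommend adopting the paper's pairing-with-the-equation trick.
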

\begin{proof}
From \eqref{TRT00}, it follows that $\L_{N,p,s,\g}\ge 2\L(\g_0)$,
hence to conclude we have just to prove the reverse inequality.

We closely follow the argument used in \cite{FS}.

Let $w_0(x)=|x|^{-\g_0}$, by Lemma \ref{ictp1}, we have
$$
L_{p,s,\beta}(w_0)=\L(\g_0)\dfrac{w^{p-1}_0}{|x|^{ps+2\beta}}.
$$
We set
$$
M_n=\{x\in \ren: 1\le |x|<n\}  \mbox{   and   }  O_n=\{x\in \ren:
 |x|\ge n\},$$
and define $$ w_n= \left\{\begin{array}{lll} 1-n^{-\g_0} & \mbox{
if
}& x\in B_1(0),\\
|x|^{-\g_0}-n^{-\g_0} &  \mbox{  if }& x\in M_n,\\
0 &  \mbox{  if }& x\in O_n.\\
\end{array}
\right.
$$
By a direct computation, we get easily that $w_n\in
X^{s,p,\beta}_0(\ren)$.

Hence
$$
\langle L_{p,s,\beta}(w_0), w_n\rangle =\L(\g_0)\irn
\dfrac{w_nw^{p-1}_0}{|x|^{ps+2\beta}}dx.
$$
Thus
$$
\dint_{\ren}\dint_{\ren}
\dfrac{(w_n(x)-w_n(y))|w_0(x)-w_0(y)|^{p-2}(w_0(x)-w_0(y))}{|x-y|^{N+ps}|x|^\beta
|y|^\beta}\,dxdy=2\L(\g_0)\irn
\dfrac{w_nw^{p-1}_0}{|x|^{ps+2\beta}}dx.
$$
Let analyze each term in the previous identity. As in \cite{FS} we
obtain that
$$
\begin{array}{lll}
\dint_{\ren}\dint_{\ren}
\dfrac{(w_n(x)-w_n(y))|w_0(x)-w_0(y)|^{p-2}(w_0(x)-w_0(y))}{|x-y|^{N+ps}|x|^\beta
|y|^\beta}\,dxdy\ge \\
\dint_{\ren}\dint_{\ren}
\dfrac{|w_n(x)-w_n(y)|^p}{|x-y|^{N+ps}|x|^\beta |y|^\beta}\,dxdy.
\end{array}
$$
On the other hand we have
$$
\irn \dfrac{w_nw^{p-1}_0}{|x|^{ps+2\beta}}dx=\irn
\dfrac{w^p_n}{|x|^{ps+2\beta}}dx+I_n+J_n,
$$
where
$$
I_n=\int_{B_1(0)}(1-n^{-\g_0})(w^{p-1}_0-(1-n^{-\g_0})^{p-1})\dfrac{dx}{|x|^{ps+\beta}},
$$
and
$$
J_n=\int_{M_n}(w_0(x)-n^{-\g_0})(w^{p-1}_0-(w_0(x)-n^{-\g_0})^{p-1})\dfrac{dx}{|x|^{ps+\beta}}.
$$
It is clear that $I_n,J_n\ge 0$, using a direct computation we can
prove that
$$
I_n+J_n\le C\mbox{  for all }n\ge 1.
$$
Thus, combining the above estimates, it holds
\begin{eqnarray}\label{KPL}
\dyle\L_{N,p,s,\g} &\le & \dfrac{\dint_{\re^N}\dint_{\re^N}
\dfrac{|w_n(x)-w_n(y)|^p}{|x-y|^{N+ps}|x|^\beta|y|^\beta}dxdy}{\dyle\int_{\ren}\dfrac{|w_n(x)|^p}{|x|^{ps+\beta}}dx}\\
&\le &
2\L(\g_0)\Big(1+\dfrac{I_n+J_n}{\dyle\int_{\ren}\dfrac{|w_n(x)|^p}{|x|^{ps+\beta}}dx}\Big).
\end{eqnarray}
Since
$\dyle\int_{\ren}\dfrac{|w_n(x)|^p}{|x|^{ps+\beta}}dx\uparrow
\infty$ as $n\to \infty$, then passing to the limit in
\eqref{KPL}, it follows that
$$
\L_{N,p,s,\g}\le 2\L(\g_0)
$$
and then the result follows.
\end{proof}
\

In the sequel we need to use a version of the Hardy inequality in
bounded domains. More precisely, we have the next result.
\begin{Lemma}\label{cor}
Let $\Omega$ be a bounded regular domain such that $0\in \Omega$,
then there exists a constant $C\equiv C(\Omega,s,p,N)>0$ such that
for all $u\in \mathcal{C}^\infty_0(\Omega)$, we have
\begin{equation}\label{boun}
\begin{array}{rcl}
C\dint\limits_{\Omega} \dfrac{|u(x)|^p}{|x|^{ps+2\beta}}\,dx
&\leq&\dint\limits_{\Omega}\dint\limits_{\Omega}
\dfrac{|u(x)-u(y)|^p}{|x-y|^{N+ps}}\dfrac{dx}{|x|^{\beta}}
\dfrac{dy}{|y|^\beta}.
\end{array}
\end{equation}
\end{Lemma}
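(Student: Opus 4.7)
The plan is to deduce this bounded-domain weighted Hardy inequality from the global version (Theorem \ref{HHH}) by carefully controlling the tail term that arises when one restricts the outer double integral from $\R^N\times\R^N$ to $\Omega\times\Omega$. Fix $r,R>0$ with $B_r(0)\subset\Omega\subset B_R(0)$. For $u\in\mathcal{C}_0^\infty(\Omega)$, extending $u$ by zero to $\R^N$ and applying Theorem \ref{HHH} with the optimal exponent $\gamma_0$ gives
$$
2\Lambda(\gamma_0)\int_\Omega \frac{|u|^p}{|x|^{ps+2\beta}}\,dx \leq \int_\Omega\int_\Omega\frac{|u(x)-u(y)|^p}{|x-y|^{N+ps}|x|^\beta|y|^\beta}\,dx\,dy + 2T(u),
$$
with $T(u) = \int_\Omega |u(x)|^p F(x)|x|^{-\beta}\,dx$ and $F(x) = \int_{\Omega^c}|x-y|^{-(N+ps)}|y|^{-\beta}\,dy$. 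The aim is to split $T(u)$ into a piece absorbable by the left-hand side plus a piece dominated by the double integral on $\Omega\times\Omega$.

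First I would establish two pointwise bounds on $F$. For $x\in B_{r/2}$, every $y\in\Omega^c$ satisfies $|y|\geq r$ and $|x-y|\geq|y|/2$, so $F(x)\leq 2^{N+ps}\int_{|y|\geq r}|y|^{-N-ps-\beta}\,dy = C_0\, r^{-(ps+\beta)}$. For general $x\in\Omega$, splitting $\Omega^c$ into $\Omega^c\cap B_{2R}$ (where $|y|\geq r$) and $\{|y|>2R\}$ (where $|x-y|\geq|y|/2$) yields $F(x)\leq C_3 + C_4\,d(x,\partial\Omega)^{-ps}$. Now split $T(u)=T_1+T_2$ according to $\Omega=B_\delta\cup(\Omega\setminus B_\delta)$ for $\delta\in(0,r/2]$ to be chosen. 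On $B_\delta$, the first bound on $F$ together with $|x|^{-\beta}\leq\delta^{ps+\beta}|x|^{-(ps+2\beta)}$ yields
$$T_1 \leq C_0(\delta/r)^{ps+\beta}\int_\Omega \frac{|u|^p}{|x|^{ps+2\beta}}\,dx.$$
On $\Omega\setminus B_\delta$, where $|x|^{-\beta}\leq\delta^{-\beta}$, the second bound reduces $T_2$ to a constant multiple of $\int_\Omega|u|^p$ and $\int_\Omega|u|^p/d(x,\partial\Omega)^{ps}$. The former is controlled by Sobolev embedding plus H\"older on the bounded domain, and the latter by the classical unweighted fractional Hardy inequality on a bounded Lipschitz domain; since $|x|^\beta|y|^\beta\leq R^{2\beta}$ on $\Omega\times\Omega$, both are dominated by a multiple of the weighted double integral, giving $T_2\leq C(\Omega,\delta)\int_\Omega\int_\Omega\frac{|u(x)-u(y)|^p}{|x-y|^{N+ps}|x|^\beta|y|^\beta}\,dx\,dy$.

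Combining these bounds in the displayed inequality and choosing $\delta$ small enough that $2C_0(\delta/r)^{ps+\beta}\leq\Lambda(\gamma_0)$, the $T_1$-contribution is absorbed on the left, producing the desired inequality with constant $C=\Lambda(\gamma_0)/(1+2C(\Omega,\delta))>0$. The hardest step is the estimate of $T_2$: the near-boundary behavior $F(x)\leq C\,d(x,\partial\Omega)^{-ps}$ forces one to absorb an integral weighted by $d(x,\partial\Omega)^{-ps}$ into the weighted double integral on $\Omega\times\Omega$, and this requires invoking the classical unweighted fractional Hardy inequality on bounded Lipschitz domains as an external input.
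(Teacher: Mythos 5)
Your proof takes a genuinely different route from the paper's. The paper's argument is very short and soft: it extends $u$ to $\ren$ via the extension operator of Lemma~\ref{ext} (not by zero), applies the global weighted Hardy inequality of Theorem~\ref{HHH} to the extension $\tilde u$, restricts the left-hand side to $\Omega$, and then invokes the norm equivalence asserted in Remark~\ref{equiv} to pass from the full $X^{s,p,\beta}(\Omega)$-norm down to the double integral over $\Omega\times\Omega$. Your argument instead extends $u$ by zero, applies Theorem~\ref{HHH}, and controls the resulting off-diagonal tail $T(u)$ by hand, splitting it into a near-origin piece $T_1$ that you absorb into the left-hand side and a near-boundary piece $T_2$ that you want to dominate by the $\Omega\times\Omega$ double integral. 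This bypasses Lemma~\ref{ext} and Remark~\ref{equiv} entirely; the identification of $T(u)$, the pointwise estimates on $F$, and the absorption of $T_1$ are all correct.

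There is, however, a genuine gap in the control of $T_2$. The bound $F(x)\le C_3 + C_4\,d(x,\partial\Omega)^{-ps}$ is sharp, since $F$ really has a boundary singularity of order $ps$, and you then invoke the fractional boundary Hardy inequality
\begin{equation*}
\int_\Omega \frac{|u(x)|^p}{d(x,\partial\Omega)^{ps}}\,dx \le C \int_\Omega\int_\Omega\frac{|u(x)-u(y)|^p}{|x-y|^{N+ps}}\,dx\,dy, \qquad u\in\mathcal{C}_0^\infty(\Omega).
\end{equation*}
By Dyda's theorem this inequality on a bounded Lipschitz domain holds if and only if $ps>1$; for $ps\le 1$ it fails, the obstruction being that $\mathcal{C}_0^\infty(\Omega)$ is dense in $W^{s,p}(\Omega)$ when $ps<1$, so the restricted $\Omega\times\Omega$ seminorm cannot control a boundary weight of order $ps$. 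The same objection applies to your Sobolev--H\"older step for the term $\int_\Omega|u|^p\,dx$: a Sobolev inequality with only the $\Omega\times\Omega$ seminorm on the right-hand side is itself a Poincar\'e-type inequality that fails for $ps<1$. So as written your argument establishes the Lemma only under the additional hypothesis $ps>1$, which is not imposed in the statement. It is worth noting that the paper's own proof leans on Remark~\ref{equiv}, a Poincar\'e-type inequality stated without proof that faces exactly the same difficulty when $ps<1$ (test it on cutoffs $\eta_n$ equal to $1$ away from a shrinking boundary layer), so both proofs carry the same implicit restriction; your approach at least has the virtue of making the obstruction explicit.
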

\begin{proof} Fix  $u\in \mathcal{C}^\infty_0(\Omega)$ and let $\tilde{u}$, be the extension of $u$ to $\ren$ defined in Lemma \ref{ext}.
Then from Theorem \ref{HHH}, we get
$$
2\L(\g)\dint\limits_{\re^N}
\dfrac{|\tilde{u}(x)|^p}{|x|^{ps+2\beta}}\,dx
\leq\dint\limits_{\re^N}\dint\limits_{\re^N}
\dfrac{|\tilde{u}(x)-\tilde{u}(y)|^p}{|x-y|^{N+ps}}\dfrac{dx}{|x|^{\beta}}
\dfrac{dy}{|y|^\beta}\le
||\tilde{u}||^p_{{X^{s,p,\beta}(\ren)}}\le
C||u||^p_{{X^{s,p,\beta}(\Omega)}}.$$ Since
$\tilde{u}_{|\Omega}=u$, form Remark \ref{equiv} we conclude
that
\begin{equation*}
\begin{array}{lll}
\dyle 2\L(\g)\dint\limits_{\Omega}
\dfrac{|u(x)|^p}{|x|^{ps+2\beta}}\,dx  & \leq &
C||u||^p_{{X^{s,p,\beta}(\Omega)}}\\
&\le & C_1\dyle
|||u|||^p_{{X^{s,p,\beta}_0(\Omega)}}=C_1\dint\limits_{\Omega}\dint\limits_{\Omega}
\dfrac{|u(x)-u(y)|^p}{|x-y|^{N+ps}}\dfrac{dx}{|x|^{\beta}}
\dfrac{dy}{|y|^\beta}.\end{array} \end{equation*} Hence we reach
the desired result.
\end{proof}

Now, we are able to proof Theorem \ref{main}.

{\bf Proof of Theorem \ref{main}. }

We follow closely the arguments used in \cite{APP}. Let $u \in
\mathcal{C}_0^\infty(\Omega)$ and define $\alpha=\dfrac{N-ps}{p}$,
then $w(x)=|x|^{-\alpha}$ and $v(x)= \dfrac{u(x)}{w(x)}$.

Recall that from the result of \cite{FS}, we have
\begin{equation}\label{FS1} h_s(u)\geq
C\dint_{\ren}\dint_{\ren}\dfrac{|v(x)-v(y)|^p}{|x-y|^{N+ps}}\dfrac{\,dx}{|x|^{\frac{N-ps}{2}}}\dfrac{\,dy}{|y|^{\frac{N-ps}{2}}}.
\end{equation}
Let us analyze the right hand side of the previous
inequality.

Notice that
$$
\begin{array}{rcl}
 \dfrac{|v(x)-v(y)|^p}{|x-y|^{N+ps}}\, w(x)^{\frac{p}{2}} w(y)^{\frac{p}{2}} &=& \dfrac{|w(y)u(x)-w(x)u(y)|^p}{|x-y|^{N+ps}}\dfrac{1}{(w(x)w(y))^{\frac{p}{2}}}\\
 \\
 \\
 &=& \dfrac{\big|(u(x)-u(y))-\dfrac{u(y)}{w(y)}(w(x)-w(y))\big|^p}{|x-y|^{N+ps}}\left(\dfrac{w(y)}{w(x)}\right)^{\frac{p}{2}}= f_{1}(x,y).
\end{array}
$$
In the same way, thanks to the symmetry of $f_1(x,y)$, it immediately follows that
$$
\dfrac{|v(x)-v(y)|^p}{|x-y|^{N+ps}}\, (w(x))^{\frac{p}{2}} (w(y))^{\frac{p}{2}} = \dfrac{\big|(u(y)-u(x))-\dfrac{u(x)}{w(x)}(w(y)-w(x))\big|^p}{|x-y|^{N+ps}}\left(\dfrac{w(x)}{w(y)}\right)^{\frac{p}{2}}=f_2(x,y).
$$
Hence,
$$
h_s(u)\ge \dfrac{1}{2}\dint_{\re^N}\dint_{\re^N}
f_{1}(x,y)\,dx\,dy + \dfrac{1}{2}\dint_{\re^N}\dint_{\re^N}
f_{2}(x,y)\,dx\,dy.
$$
Since $f_1$ and $f_2$ are positive functions, it follows that
$$
h_s(u)\geq \dfrac{1}{2}\dint_{\Omega}\dint_{\Omega} f_{1}(x,y)\,dx\,dy + \dfrac{1}{2}\dint_{\Omega}\dint_{\Omega} f_{2}(x,y)\,dx\,dy.
$$
Using the fact that $\O$ is a bounded domain, we obtain that for
all $(x,y)\in (\Omega\times\Omega)$ and $q<p$,
$$
\dfrac{1}{|x-y|^{N+ps}} \geq \dfrac{C(\Omega)}{|x-y|^{N+qs}}
$$
and
$$
Q(x,y)\equiv
\frac{\left(w(x)w(y)\right)^{\frac{p}{2}}}{w(x)^p+w(y)^p}\le C.
$$
Define
$$
D(x,y)\equiv\left(\frac{w(x)}{w(y)}\right)^{\frac{p}{2}}+\left(\frac{w(y)}{w(x)}\right)^{\frac{p}{2}}\equiv\frac{w(x)^p+w(y)^p}{\left(w(x)w(y)\right)^{\frac{p}{2}}},
$$
then $Q(x,y)D(x,y)=1$. Thus
\begin{eqnarray*}
&f_{1}(x,y)\geq C(\Omega)
Q(x,y)\left(\dfrac{w(y)}{w(x)}\right)^{\frac{p}{2}} \times\\
& \Big[ \dfrac{|u(x)-u(y)|^p}{|x-y|^{N+qs}} -p
\dfrac{|u(x)-u(y)|^{p-2}}{|x-y|^{N+qs}}\big\langle
u(x)-u(y),\dfrac{u(y)}{w(y)}(w(x)-w(y))\big\rangle \\
&
+C(p)\dfrac{|\dfrac{u(y)}{w(y)}(w(x)-w(y))|^p}{|x-y|^{N+qs}}\Big].
\end{eqnarray*}
Hence
$$
\begin{array}{rcl}
f_{1}(x,y)&\geq& \Big[C(\Omega)
Q(x,y)\left(\dfrac{w(y)}{w(x)}\right)^{\frac{p}{2}}
 \dfrac{|u(x)-u(y)|^p}{|x-y|^{N+qs}}\Big]\\
 &&\\
     &-&\Big[p C(\Omega) Q(x,y)\left(\dfrac{w(y)}{w(x)}\right)^{\frac{p}{2}} \dfrac{|u(x)-u(y)|^{p-1}}{|x-y|^{N+qs}}\big|\dfrac{u(y)}{w(y)}\big||(w(x)-w(y))| \Big].
     \end{array}
$$
In the same way we reach that
$$
\begin{array}{rcl}
f_{2}(x,y)&\geq& \Big[C(\Omega)
Q(x,y)\left(\dfrac{w(x)}{w(y)}\right)^{\frac{p}{2}}
 \dfrac{|u(y)-u(x)|^p}{|x-y|^{N+qs}}\Big]\\
 &&\\
 &-&\Big[p C(\Omega) Q(x,y)\left(\dfrac{w(x)}{w(y)}\right)^{\frac{p}{2}} \dfrac{|u(x)-u(y)|^{p-1}}{|x-y|^{N+qs}}\big|\dfrac{u(x)}{w(x)}\big||(w(x)-w(y))| \Big].
 \end{array}
$$
Therefore,
$$
\begin{array}{rcl}
h_s(u)&\geq& C(\Omega) \dint_{\Omega}\dint_{\Omega}Q(x,y)\Big(\left(\dfrac{w(y)}{w(x)}\right)^{\frac{p}{2}}+\left(\dfrac{w(x)}{w(y)}\right)^{\frac{p}{2}} \Big)\dfrac{|u(x)-u(y)|^p}{|x-y|^{N+qs}}\,dx\,dy \\
&&\\
&-&pC(\Omega)\dint_{\Omega}\dint_{\Omega} \Big[ Q(x,y)\left(\dfrac{w(y)}{w(x)}\right)^{\frac{p}{2}} \dfrac{|u(x)-u(y)|^{p-1}}{|x-y|^{N+qs}}\big|\dfrac{u(y)}{w(y)}\big||(w(x)-w(y))| \Big]\,dx\,dy\\
&&\\
&-&pC(\Omega)\dint_{\Omega}\dint_{\Omega} \Big[
Q(x,y)\left(\dfrac{w(x)}{w(y)}\right)^{\frac{p}{2}}
\dfrac{|u(x)-u(y)|^{p-1}}{|x-y|^{N+qs}}\big|\dfrac{u(x)}{w(x)}\big||(w(x)-w(y))|
\Big]\,dx\,dy.
\end{array}
$$
Thus
\begin{equation}
\begin{array}{rcl}
h_s(u)&\geq&
C(\Omega)\dint_{\Omega}\dint_{\Omega}\dfrac{|u(x)-u(y)|^p}{|x-y|^{N+qs}}\,dx\,dy\\
\\&-& C_1(\Omega,p)\dint_{\Omega}\dint_{\Omega}\big(h_{1}(x,y)+
h_{2}(x,y)\big)\,dx\,dy,
\end{array}
\end{equation} \label{hsu}
with
$$
h_{1}(x,y)= Q(x,y)\left(\dfrac{w(y)}{w(x)}\right)^{\frac{p}{2}}
\dfrac{|u(x)-u(y)|^{p-1}}{|x-y|^{N+qs}}\big|\dfrac{u(y)}{w(y)}\big||(w(x)-w(y))|,
$$
$$
h_{2}(x,y)= Q(x,y)\left(\dfrac{w(x)}{w(y)}\right)^{\frac{p}{2}}
\dfrac{|u(x)-u(y)|^{p-1}}{|x-y|^{N+qs}}\big|\dfrac{u(x)}{w(x)}\big||(w(x)-w(y))|.
$$
Since $h_{1}(x,y)$ and $h_{2}(x,y)$ are symmetric functions, we
just have to estimate $\dint_{\Omega}\dint_{\Omega}
h_{2}(x,y)\,dx\,dy.$

Using Young inequality, we get
\begin{equation}
\begin{array}{rcl}
\dyle \dint_{\Omega}\dint_{\Omega} h_{2}(x,y) dxdy &\le & \dyle
\e\dint_{\Omega}\dint_{\Omega}\dfrac{|u(x)-u(y)|^p}{|x-y|^{N+qs}}\,dx\,dy\\
& + & \dyle C(\e)\dint_{\Omega}\dint_{\Omega}G(x,y)\,dx\,dy,
\end{array}
\end{equation} \label{h1h2}
with
$$
G(x,y)=(Q(x,y))^p\left(\dfrac{w(x)}{w(y)}\right)^{\frac{p^2}{2}}
\big|\dfrac{u(x)}{w(x)}\big|^p\frac{|w(x)-w(y)|^p}{|x-y|^{N+qs}}.
$$
We claim that
$$
I\equiv\dint_{\Omega}\dint_{\Omega}G(x,y)\,dx\,dy\le C
\dint\limits_{\re^N}\dint\limits_{\re^N}\dfrac{|v(x)-v(y)|^p}{|x-y|^{N+ps}}\dfrac{\,dx}{|x|^{\frac{N-ps}{2}}}\dfrac{\,dy}{|y|^{\frac{N-ps}{2}}}.
$$
Notice that
$$
I= \dint_{\Omega}\dint_{\Omega} \dfrac{(u(x))^p}{|x-y|^{N+qs}}
\dfrac{(w(x))^{p^{2}-p}|w(x)-w(y)|^p}{(w(x)^p+w(y)^p)^p}
\,dx\,dy,
$$
then
$$
I= \dint_{\Omega} u^p(x)\Big[\dint_{\Omega}
\dfrac{||x|^{\alpha}-|y|^{\alpha}|^p}{(|x|^{\alpha p} +
|y|^{\alpha p})^p} \dfrac{|y|^{\alpha p(p-1)}}{|x-y|^{N+qs}}
\,dy\Big]\,dx.
$$
To compute the above integral, we closely follow the arguments
used in \cite{FV}. We set $y=\rho y'$ and $x=rx'$ with
$|x'|=|y'|=1$, then taking in consideration that $\O\subset
B_0(R)$, it follows that
\begin{eqnarray*}
I & = & \dint_{\Omega} u^p(x)\Big[\dint_{\Omega} \dfrac{||x|^{\alpha}-|y|^{\alpha}|^p}{(|x|^{\alpha p} + |x|^{\alpha p})^p} \dfrac{|y|^{\alpha p(p-1)}}{|x-y|^{N+qs}} \,dy\Big]\,dx \\
& \le & \dint_{\Omega} u^p(x)\int_0^R
\dfrac{(|r^{\alpha}-\rho^{\alpha}|^p \rho^{\alpha
p(p-1)+N-1}}{(r^{p\alpha}+\rho^{p\alpha})^p}\Big(\dint_{{\mathbb{S}}^{N-1}}\dfrac{dy'}{|\rho
y'-rx'|^{N+qs}}\Big)d\rho dx.
\end{eqnarray*}
We set $\rho=r\sigma$, then
\begin{eqnarray*}
I & \le & \dint_{\Omega}
\frac{u^p(x)}{|x|^{qs}}\int_0^{\frac{R}{r}}
\dfrac{|1-\sigma^{\alpha}|^p\sigma^{\alpha
p(p-1)+N-1}}{(1+\sigma^{\alpha
p})^p}\Big(\dint_{{\mathbb{S}}^{N-1}}\dfrac{dy'}{|\sigma
y'-x'|^{N+qs}}\Big)d\sigma dx\\
&=& \dint_{\Omega} \frac{u^p(x)}{|x|^{qs}}\int_0^{\frac{R}{r}}
\dfrac{|1-\sigma^{\alpha}|^p\sigma^{\alpha
p(p-1)+N-1}}{(1+\sigma^{\alpha p})^p}K(\sigma)d\sigma dx\le
\mu\dint_{\Omega} \frac{u^p(x)}{|x|^{qs}}dx,
\end{eqnarray*}
where $$ \mu=\int_0^{\infty}
\dfrac{|1-\sigma^{\alpha}|^p\sigma^{\alpha
p(p-1)+N-1}}{(1+\sigma^{\alpha p})^p}K(\sigma)d\sigma$$ and
$$
K(\sigma)=2\dfrac{\pi^{\frac{N-1}{2}}}{\Gamma(\frac{N-1}{2})}\int_0^\pi
\dfrac{\sin^{N-2}(\theta)}{(1-2\sigma \cos
(\theta)+\sigma^2)^{\frac{N+qs}{2}}}d\theta.
$$
Let us show that $\mu<\infty$.

It is clear that, as $\sigma \to \infty$, we have
$$
\dfrac{(|1-\sigma^{\alpha}|^p\sigma^{\alpha
p(p-1)+N-1}}{(1+\sigma^{\alpha p})^p}K(\sigma)\backsimeq
\sigma^{-1-qs}\in L^1(1,\infty).
$$
Now, taking in consideration that $K(\s)\le C|1-\s|^{-1-ps}$ as
$s\to 1$, and following the same computation as in Lemma
\ref{ictp1}, it follows that $$ \int_0^{1}
\dfrac{(1-\sigma^{\alpha})^p\sigma^{\alpha
p(p-1)+N-1}}{(1+\sigma^{\alpha p})^p}K(\sigma)d\sigma<\infty.
$$
Thus $\mu<\infty$.

Hence combining the above estimates, there results that
$$
I\le C\dint_{\Omega} \frac{u^p(x)}{|x|^{qs}}dx.
$$
Since $u(x)=v(x)|x|^{-(\frac{N-ps}{p})}$, then
$$
I\le C\dint_{\Omega} \dfrac{|v(x)|^p}{|x|^{N-s(p-q)}}\,dx.
$$
Let $\beta_0=\frac{N-ps}{2}+\frac{(q-p)s}{2}$, then
$\beta_0<\frac{N-ps}{2}$. Applying Lemma \ref{cor}, we obtain that
$$
\begin{array}{rcl}
I&\le &C(\Omega)\dint\limits_{\Omega}\dint\limits_{\Omega}\dfrac{|v(x)-v(y)|^p}{|x-y|^{N+ps}|x|^{\beta_0}|y|^{\beta_0}} \,dy\,dx \\
&\le & C_1(\Omega)\dint\limits_{\Omega}\dint\limits_{\Omega}\dfrac{|v(x)-v(y)|^p}{|x-y|^{N+ps}|x|^{\frac{N-ps}{2}}|y|^{\frac{N-ps}{2}}} \,dy\,dx \\
&\le & C_1(\Omega)\dint\limits_{\re^N}\dint\limits_{\re^N}
\dfrac{|v(x)-v(y)|^p}{|x-y|^{N+ps}|x|^{\frac{N-ps}{2}}|y|^{\frac{N-ps}{2}}}
\,dy\,dx.
\end{array}
$$
Therefore, using again estimate \eqref{FS1}, we reach that
$$
I\le C_2(\Omega)
\dint\limits_{\re^N}\dint\limits_{\re^N}\dfrac{|v(x)-v(y)|^p}{|x-y|^{N+ps}}\dfrac{\,dx}{|x|^{\frac{N-ps}{2}}}\dfrac{\,dy}{|y|^{\frac{N-ps}{2}}}
$$
and the claim follows.

As a direct consequence of the above estimates, we have proved
that
\begin{equation}\label{mad1}
\dint_{\Omega}\dint_{\Omega}\dfrac{|u(x)-u(y)|^p}{|x-y|^{N+qs}}\,dx\,dy\le
C_3\dint\limits_{\re^N}\dint\limits_{\re^N}\dfrac{|v(x)-v(y)|^p}{|x-y|^{N+ps}}\dfrac{\,dx}{|x|^{\frac{N-ps}{2}}}\dfrac{\,dy}{|y|^{\frac{N-ps}{2}}}.
\end{equation}
Thus
$$\dint_{\Omega}\dint_{\Omega}\dfrac{|u(x)-u(y)|^p}{|x-y|^{N+qs}}\,dx\,dy\le
C h_s(u),
$$
and the result follows at once. \cqd

We are now in position to prove the Theorem \ref{main01}.

{\bf Proof of Theorem \ref{main01}.} Recall that
$\a=\frac{N-ps}{p}$. Since $\a p^*_{s,q}=\frac{N(N-ps)}{N-qs}<N$,
it follows that $\dint\limits_{\O}
\dfrac{|u(x)|^{p^*_{s,q}}}{|x|^{\a p^*_{s,q}}}\,dx<\infty$, for
all $u \in \mathcal{C}_0^\infty(\ren)$.

To prove \eqref{sara00}, we will use estimate \eqref{mad1} and the
fractional Sobolev inequality.

Fix $u \in \mathcal{C}_0^\infty(\Omega)$ and define
$u_1(x)=\dfrac{u(x)}{|x|^{\a}}$. By \eqref{mad1}, we obtain that
\begin{equation*}
C(\O)\dint_{\Omega}\dint_{\Omega}\dfrac{|u_1(x)-u_1(y)|^p}{|x-y|^{N+qs}}\,dx\,dy\le
\dint\limits_{\re^N}\dint\limits_{\re^N}\dfrac{|u(x)-u(y)|^p}{|x-y|^{N+ps}}\dfrac{\,dx}{|x|^{\frac{N-ps}{2}}}\dfrac{\,dy}{|y|^{\frac{N-ps}{2}}}.
\end{equation*}
Now, using Sobolev inequality, there results that
$$
S\big(\io |u_1(x)|^{p^*_{s,q}}dx \Big)^{\frac{p}{p^*_{s,q}}}\le
\dint_{\Omega}\dint_{\Omega}\dfrac{|u_1(x)-u_1(y)|^p}{|x-y|^{N+qs}}\,dx\,dy,
$$
where $p^*_{s,q}=\frac{pN}{N-qs}$. Hence, substituting $u_1$ by
its value, we get
\begin{equation}\label{sara11}
\Big(\dint\limits_{\O} \dfrac{|u(x)|^{p^*_{s,q}}}{|x|^{\a
p^*_{s,q}}}\,dx\Big)^{\frac{p}{p^*_{s,q}}}\le
C\dint_{\ren}\dint_{\ren}\dfrac{|u(x)-u(y)|^p}{|x-y|^{N+ps}}\dfrac{\,dx}{|x|^{\b}}\dfrac{\,dy}{|y|^{\b}}.
\end{equation}
If we set $\beta=\frac{N-ps}{2}=\alpha \frac{p}{2}$, then inequality \eqref{sara11}
can be written in the form
\begin{equation}\label{sara1100}
\Big(\dint\limits_{\O} \dfrac{|u(x)|^{p^*_{s,q}}}{|x|^{2\b
\frac{p^*_{s,q}}{p}}}\,dx\Big)^{\frac{p}{p^*_{s,q}}}\le
C\dint_{\ren}\dint_{\ren}\dfrac{|u(x)-u(y)|^p}{|x-y|^{N+ps}}\dfrac{\,dx}{|x|^{\b}}\dfrac{\,dy}{|y|^{\b}}.
\end{equation}
 \cqd

\

As a consequence, we will prove the fractional
Caffarelli-Kohn-Nirenberg inequality given in Theorem \ref{CKN01}.

{\bf Proof of Theorem \ref{CKN01}.} Let  $u \in
\mathcal{C}_0^\infty(\ren)$, without loss of generality, we can
assume that $u\ge 0$. Using the fact that $\beta<\frac{N-ps}{2}$,
we easily get that $\dint\limits_{\re^N}
\dfrac{|u(x)|^{p^*_s}}{|x|^{2\beta\frac{p^*_s}{p}}}\,dx<\infty$.

From now and for simplicity of typing, we denote by $C,
C_1,C_2,...$ any universal constant that does not depend on $u$
and can change from a line to another.

We set $\tilde{u}(x)=\dfrac{u(x)}{w_1(x)}$, where
$w_1(x)=|x|^{\frac{2\beta}{p}}$, then
\begin{equation}\label{new1}
\Big(\dint\limits_{\re^N}
\dfrac{|u(x)|^{p^*_s}}{|x|^{2\beta\frac{p^*_s}{p}}}\,dx\Big)^{\frac{p}{p^*_s}}=\Big(\dint\limits_{\re^N}
|\tilde{u}|^{p^*_s}\,dx\Big)^{\frac{p}{p^*_s}}.
\end{equation}
Using Sobolev inequality, it follows that
\begin{equation}\label{new2}
S\Big(\dint\limits_{\re^N}
|\tilde{u}|^{p^*_s}\,dx\Big)^{\frac{p}{p^*_s}}\le
\dint\limits_{\re^N}\dint\limits_{\re^N}
\dfrac{|\tilde{u}(x)-\tilde{u}(y)|^p}{|x-y|^{N+ps}}dxdy.\end{equation}
To get the desired result we just have to show that

\begin{equation}\label{mainppp}
\dint\limits_{\re^N}\dint\limits_{\re^N}
\dfrac{|\tilde{u}(x)-\tilde{u}(y)|^p}{|x-y|^{N+ps}}dxdy\le C
\dint\limits_{\re^N}\dint\limits_{\re^N}
\dfrac{|u(x)-u(y)|^p}{|x-y|^{N+ps}}\dfrac{dx}{|x|^{\beta}}
\dfrac{dy}{|y|^\beta}\end{equation} for some positive constant
$C$.

Using the definition of $\tilde{u}$, we get
$$
\dint\limits_{\re^N}\dint\limits_{\re^N}
\dfrac{|u(x)-u(y)|^p}{|x-y|^{N+ps}}\dfrac{dx}{|x|^{\beta}}
\dfrac{dy}{|y|^\beta}=\dint\limits_{\re^N}\dint\limits_{\re^N}
\dfrac{|w_1(x)\tilde{u}(x)-w_1(y)\tilde{u}(y)|^p}{|x-y|^{N+ps}}\dfrac{dx}{w^{\frac{p}{2}}_1(x)}
\dfrac{dy}{w^{\frac{p}{2}}_1(y)}.
$$
Notice that
\begin{eqnarray*}
&\dfrac{|w_1(x)\tilde{u}(x)-w_1(y)\tilde{u}(y)|^p}{|x-y|^{N+ps}}\dfrac{1}{w^{\frac{p}{2}}_1(x)}
\dfrac{1}{w^{\frac{p}{2}}_1(y)}=\\ &
\dfrac{\big|(\tilde{u}(x)-\tilde{u}(y))-w_1(y)\tilde{u}(y)(\dfrac{1}{w_1(x)}-\dfrac{1}{w_1(y)})\big|^p}{|x-y|^{N+ps}}
\left(\dfrac{w_1(x)}{w_1(y)}\right)^{\frac{p}{2}}\equiv
\tilde{f}_{1}(x,y).
\end{eqnarray*}
In the same way we have
\begin{eqnarray*}
&\dfrac{|w_1(x)\tilde{u}(x)-w_1(y)\tilde{u}(y)|^p}{|x-y|^{N+ps}}\dfrac{1}{w^{\frac{p}{2}}_1(x)}
\dfrac{1}{w^{\frac{p}{2}}_1(y)}=\\
&
\dfrac{\big|(\tilde{u}(y)-\tilde{u}(x))-w_1(x)\tilde{u}(x)(\dfrac{1}{w_1(y)}-\dfrac{1}{w_1(x)})\big|^p}{|x-y|^{N+ps}}\left(\dfrac{w_1(y)}{w_1(x)}\right)^{\frac{p}{2}}\equiv
\tilde{f}_{2}(x,y).
\end{eqnarray*}
Since
$$
\dint\limits_{\re^N}\dint\limits_{\re^N} \tilde{f}_1(x,y)dxdy=
\dint\limits_{\re^N}\dint\limits_{\re^N} \tilde{f}_2(x,y)dxdy,
$$
we get
\begin{eqnarray}\label{somme}
&\dint\limits_{\re^N}\dint\limits_{\re^N}
\dfrac{|u(x)-u(y)|^p}{|x-y|^{N+ps}}\dfrac{dx}{|x|^{\beta}}
\dfrac{dy}{|y|^\beta}=\dfrac
12\dint\limits_{\re^N}\dint\limits_{\re^N} \tilde{f}_1(x,y)dxdy+
\frac 12\dint\limits_{\re^N}\dint\limits_{\re^N}
\tilde{f}_2(x,y)dxdy.
\end{eqnarray}

As in the proof of Theorem \ref{main}, we define
{
$$
Q_1(x,y)\equiv
\frac{\left(w_1(x)w_1(y)\right)^{\frac{p}{2}}}{w_1(x)^p+w_1(y)^p}\le C.
$$
It is clear that
$$
Q_1(x,y)\times \bigg( \left(\frac{w(x)}{w(y)}\right)^{\frac{p}{2}}+\left(\frac{w(y)}{w(x)}\right)^{\frac{p}{2}}\bigg)=1.
$$
}
Thus
\begin{eqnarray*}
&\tilde{f}_{1}(x,y)\geq { C Q_1(x,y)}
\left(\dfrac{w_1(x)}{w_1(y)}\right)^{\frac{p}{2}} \times\\
& \Big[ \dfrac{|\tilde{u}(x)-\tilde{u}(y)|^p}{|x-y|^{N+ps}}-p
\dfrac{|\tilde{u}(x)-\tilde{u}(y)|^{p-2}}{|x-y|^{N+ps}}\big\langle
\tilde{u}(x)-\tilde{u}(y),
w_1(y)\tilde{u}(y)(\dfrac{1}{w_1(x)}-\dfrac{1}{w_1(y)})\big\rangle\\
&+C(p)\dfrac{|
w_1(y)\tilde{u}(y)(\dfrac{1}{w_1(x)}-\dfrac{1}{w_1(y)})
|^p}{|x-y|^{N+ps}}\Big],
\end{eqnarray*}
Hence
\begin{eqnarray*}
&\tilde{f}_{1}(x,y)\geq
{ C Q_1(x,y)} \left(\dfrac{w_1(x)}{w_1(y)}\right)^{\frac{p}{2}} \times\\
& \Big[ \dfrac{|\tilde{u}(x)-\tilde{u}(y)|^p}{|x-y|^{N+ps}}-p
\dfrac{|\tilde{u}(x)-\tilde{u}(y)|^{p-1}}{|x-y|^{N+ps}}|
w_1(y)\tilde{u}(y)(\dfrac{1}{w_1(x)}-\dfrac{1}{w_1(y)})|\Big].
\end{eqnarray*}
Using Young inequality, we get the existence of $C_1,C_2>0$ such
that
\begin{eqnarray*}
&\tilde{f}_{1}(x,y)\geq { C Q_1(x,y)}\left(\dfrac{w_1(x)}{w_1(y)}\right)^{\frac{p}{2}} \times\\
& \Big[ C_1\dfrac{|\tilde{u}(x)-\tilde{u}(y)|^p}{|x-y|^{N+ps}}
-C_2\dfrac{|w_1(y)\tilde{u}(y)(\dfrac{1}{w_1(x)}-\dfrac{1}{w_1(y)})|^{p}}{|x-y|^{N+ps}}|\Big].
\end{eqnarray*}
In the same way and using that $\tilde{f}_1, \tilde{f}_2$ are
symmetric functions, it holds

\begin{eqnarray*}
&f_{2}(x,y)\geq { C Q_1(x,y)}\left(\dfrac{w_1(y)}{w_1(x)}\right)^{\frac{p}{2}} \times\\
& \Big[ C_1\dfrac{|\tilde{u}(x)-\tilde{u}(y)|^p}{|x-y|^{N+ps}}
-C_2\dfrac{|w_1(x)\tilde{u}(x)(\dfrac{1}{w_1(y)}-\dfrac{1}{w_1(x)})|^{p}}{|x-y|^{N+ps}}|\Big].
\end{eqnarray*}

Thus, form \eqref{somme}, we get the existence of positive constants $C_1, C_2, {C_3}$
such that
\begin{eqnarray*}
&\dint\limits_{\re^N}\dint\limits_{\re^N}
\dfrac{|u(x)-u(y)|^p}{|x-y|^{N+ps}}\dfrac{dx}{|x|^{\beta}}
\dfrac{dy}{|y|^\beta}\ge \\
&{C_1}\dint\limits_{\re^N}\dint\limits_{\re^N}
\dfrac{|\tilde{u}(x)-\tilde{u}(y)|^p}{|x-y|^{N+ps}}{Q_1(x,y)}\Big[\left(\dfrac{w_1(y)}{w_1(x)}\right)^{\frac{p}{2}}
+\left(\dfrac{w_1(x)}{w_1(y)}\right)^{\frac{p}{2}}\Big]dxdy\\
&-{C_2}\dint\limits_{\re^N}\dint\limits_{\re^N}{ Q_1(x,y)}\left(\dfrac{w_1(x)}{w_1(y)}\right)^{\frac{p}{2}}\dfrac{|w_1(y)\tilde{u}(y)(\dfrac{1}{w_1(x)}-\dfrac{1}{w_1(y)})|^{p}}{|x-y|^{N+qs}}|dxdy\\
&-{C_3}\dint\limits_{\re^N}\dint\limits_{\re^N} { Q_1(x,y)}\left(\dfrac{w_1(y)}{w_1(x)}\right)^{\frac{p}{2}}\dfrac{|w_1(x)\tilde{u}(x)(\dfrac{1}{w_1(y)}-\dfrac{1}{w_1(x)})|^{p}}{|x-y|^{N+qs}}dxdy.
\end{eqnarray*}
Since
{$$
Q_1(x,y)\Big[\left(\dfrac{w_1(y)}{w_1(x)}\right)^{\frac{p}{2}}
+\left(\dfrac{w_1(x)}{w_1(y)}\right)^{\frac{p}{2}}\Big]=1,
$$
}
then
\begin{equation}\label{new4}
\begin{array}{lll}
&\dint\limits_{\re^N}\dint\limits_{\re^N}
\dfrac{|\tilde{u}(x)-\tilde{u}(y)|^p}{|x-y|^{N+ps}}dxdy\le
C_1\dint\limits_{\re^N}\dint\limits_{\re^N}
\dfrac{|u(x)-u(y)|^p}{|x-y|^{N+ps}}\dfrac{dx}{|x|^{\beta}}
\dfrac{dy}{|y|^\beta}\\
&+C_2\dint\limits_{\re^N}\dint\limits_{\re^N}{ Q_1(x,y)}\left(\dfrac{w_1(x)}{w_1(y)}\right)^{\frac{p}{2}}\dfrac{|w_1(y)\tilde{u}(y)(\dfrac{1}{w_1(x)}-\dfrac{1}{w_1(y)})|^{p}}{|x-y|^{N+qs}}|dxdy\\
&+C_3\dint\limits_{\re^N}\dint\limits_{\re^N}{ Q_1(x,y)}\left(\dfrac{w_1(y)}{w_1(x)}\right)^{\frac{p}{2}}\dfrac{|w_1(x)\tilde{u}(x)(\dfrac{1}{w_1(y)}-\dfrac{1}{w_1(x)})|^{p}}{|x-y|^{N+qs}}dxdy.
\end{array}
\end{equation}
We set
$$
g_1(x,y)={ Q_1(x,y)}\left(\dfrac{w_1(y)}{w_1(x)}\right)^{\frac{p}{2}}\dfrac{|w_1(x)\tilde{u}(x)(\dfrac{1}{w_1(y)}-\dfrac{1}{w_1(x)})|^{p}}{|x-y|^{N+ps}}
$$
and
$$
g_2(x,y)={ Q_1(x,y)}\left(\dfrac{w_1(x)}{w_1(y)}\right)^{\frac{p}{2}}\dfrac{|w_1(y)\tilde{u}(y)(\dfrac{1}{w_1(x)}-\dfrac{1}{w_1(y)})|^{p}}{|x-y|^{N+ps}}.
$$
It is clear that
$$
\dint\limits_{\re^N}\dint\limits_{\re^N} g_1(x,y)dxdy=
\dint\limits_{\re^N}\dint\limits_{\re^N} g_2(x,y)dxdy,
$$
therefore, to get the desired result, we just have to show that
$$
\dint\limits_{\re^N}\dint\limits_{\re^N} g_1(x,y)dxdy\le C
\dint\limits_{\re^N}\dint\limits_{\re^N}
\dfrac{|u(x)-u(y)|^p}{|x-y|^{N+ps}}\dfrac{dx}{|x|^{\beta}}
\dfrac{dy}{|y|^\beta}.
$$
Going back to the definition of $\tilde{u}$ and $w_1$, we reach
that
{$$
g_1(x,y)=\dfrac{|u(x)|^p\Big||x|^{\frac{2\beta}{p}}-|y|^{\frac{2\beta}{p}}\Big|^p}{|x|^{3\beta}|y|^{\beta}|x-y|^{N+ps}} \frac{|x|^\beta |y|^\beta}{|x|^{2\beta}+|y|^{2\beta}}
$$
}
We closely follow the same type of computation as in the proof of
Lemma \ref{ictp1}.

We have
{
\begin{eqnarray*}
\dint\limits_{\re^N}\dint\limits_{\re^N} g_1(x,y)dxdy &=&
\dint\limits_{\re^N}\dint\limits_{\re^N}\dfrac{|u(x)|^p\Big||x|^{\frac{2\beta}{p}}-|y|^{\frac{2\beta}{p}}\Big|^p}{|x|^{3\beta}|y|^{\beta}|x-y|^{N+ps}}  \frac{|x|^\beta |y|^\beta}{|x|^{2\beta}+|y|^{2\beta}} dxdy\\
&=&\dint\limits_{\re^N}\dfrac{|u(x)|^p}{|x|^{2\beta}}
\Big(\dint\limits_{\re^N}\dfrac{\Big||x|^{\frac{2\beta}{p}}-|y|^{\frac{2\beta}{p}}\Big|^p}{(|x|^{2\beta}+|y|^{2\beta})|x-y|^{N+ps}}dy\Big)dx.
\end{eqnarray*}
}
We set $r=|x|$ and $\rho=|y|$, then $x=rx', y=\rho y'$ with
$|x'|=|y'|=1$, then
\begin{eqnarray*}
&\dint\limits_{\re^N}\dfrac{|u(x)|^p}{|x|^{2\beta}}
\Big(\dint\limits_{\re^N}\dfrac{\Big||x|^{\frac{2\beta}{p}}-|y|^{\frac{2\beta}{p}}\Big|^p}{(|x|^{2\beta}+|y|^{2\beta})|x-y|^{N+ps}}dy\Big)dx=\\
&\dint\limits_{\re^N}\dfrac{|u(x)|^p}{|x|^{2\beta}} \Big[
\dint\limits_0^{+\infty}\dfrac{|r^{\frac{2\beta}{p}}-\rho^{\frac{2\beta}{p}}|^p\rho^{N-1}}{(r^{2\beta}+\rho^{2\beta})}\left(
\dint\limits_{|y'|=1}\dfrac{dH^{n-1}(y')}{|r x'-\rho y'|^{N+ps}}
\right) \,d\rho\Big]dx.
\end{eqnarray*}
Let $\s=\frac{\rho}{r}$, then
{
\begin{eqnarray*}
&\dint\limits_{\re^N}\dfrac{|u(x)|^p}{|x|^{2\beta}}
\Big(\dint\limits_{\re^N}\dfrac{\Big||x|^{\frac{2\beta}{p}}-|y|^{\frac{2\beta}{p}}\Big|^p}{(|x|^{2\beta}+|y|^{2\beta})|x-y|^{N+ps}}dy\Big)dx=
\dint\limits_{\re^N}\dfrac{|u(x)|^p}{|x|^{2\beta+ps}}
\Big[
\dint\limits_0^{+\infty}\frac{|1-\s^{\frac{2\beta}{p}}|^p}{1+\s^{2\beta}}\s^{N-1}K(\s)\,d\s\Big]dx,
\end{eqnarray*}
}
where $K$ is defined in \eqref{kkk}. Since
$$
\dint\limits_0^{+\infty}\frac{|1-\s^{\frac{2\beta}{p}}|^p}{1+\s^{2\beta}}\s^{N-1}K(\s)\,d\s\equiv
C_3<\infty,$$ it follows that
$$
\dint\limits_{\re^N}\dint\limits_{\re^N}
g_1(x,y)dxdy=C_3\dint\limits_{\re^N}
\dfrac{|u(x)|^p}{|x|^{2\beta+ps}}dx.
$$
Now, using inequality \eqref{IngL1}, we get
\begin{equation}\label{new3}
\dint\limits_{\re^N}\dint\limits_{\re^N} g_1(x,y)dxdy \leq C_4
\dint\limits_{\re^N}\dint\limits_{\re^N}
\dfrac{|u(x)-u(y)|^p}{|x-y|^{N+ps}}\dfrac{dx}{|x|^{\beta}}
\dfrac{dy}{|y|^\beta}.
\end{equation}
Combining \eqref{new1}, \eqref{new2}, \eqref{new3} and
\eqref{new4}, we reach the desired result. \cqd

In the case where $\O$ is a regular bounded domain containing the
origin, we have the following \emph{version} of Theorem
\ref{CKN01}.

\begin{Theorem}\label{last00}
Assume that $\O$ is a regular bounded domain with $0\in \O$, then
there exists a positive constant $C\equiv C(\O,N,p,s,\b)$ such
that for all $\phi\in \mathcal{C}^\infty_0(\Omega)$, we have
\begin{equation}\label{CKNNN00}
\begin{array}{rcl}
\dint_{\Omega}\dint_{\Omega}
\dfrac{|\phi(x)-\phi(y)|^p}{|x-y|^{N+ps}}\dfrac{dx}{|x|^{\beta}}
\dfrac{dy}{|y|^\beta}\ge C\Big(\dint\limits_{\Omega}
\dfrac{|\phi(x)|^{p^*_s}}{|x|^{2\beta\frac{p^*_s}{p}}}\,dx\Big)^{\frac{p}{p^*_s}}.
\end{array}
\end{equation}
\end{Theorem}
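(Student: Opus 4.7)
The plan is to reduce Theorem \ref{last00} to the whole-space inequality of Theorem \ref{CKN01} via the extension operator of Lemma \ref{ext}, and then to convert the resulting full weighted norm on the right-hand side into the Gagliardo seminorm by invoking the bounded-domain weighted Hardy inequality of Lemma \ref{cor}. This mirrors the structure of the proof of Lemma \ref{cor}, only with Theorem \ref{CKN01} playing the role that Theorem \ref{HHH} played there.

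Given $\phi \in \mathcal{C}^\infty_0(\Omega)$, I would first invoke Lemma \ref{ext} to produce an extension $\tilde\phi \in X^{s,p,\beta}(\ren)$ with $\tilde\phi|_\Omega = \phi$ and $||\tilde\phi||_{X^{s,p,\beta}(\ren)} \le C\,||\phi||_{X^{s,p,\beta}(\Omega)}$. Applying Theorem \ref{CKN01} to $\tilde\phi$ (after the standard density argument that approximates $\tilde\phi$ in the weighted norm by functions in $\mathcal{C}^\infty_0(\ren)$) yields
$$\dint_{\ren}\dint_{\ren} \dfrac{|\tilde\phi(x)-\tilde\phi(y)|^p}{|x-y|^{N+ps}} \dfrac{dx\,dy}{|x|^\beta|y|^\beta} \ge S(\beta)\Big(\dint_{\ren} \dfrac{|\tilde\phi|^{p^*_s}}{|x|^{2\beta p^*_s/p}}\,dx\Big)^{p/p^*_s}.$$
The left-hand side is controlled above by $C\,||\phi||^p_{X^{s,p,\beta}(\Omega)}$ thanks to the extension estimate, while the right-hand side is bounded below by $S(\beta)\bigl(\dint_\Omega |\phi|^{p^*_s}|x|^{-2\beta p^*_s/p}\,dx\bigr)^{p/p^*_s}$, obtained by discarding the positive contribution from $\ren\setminus\Omega$ and using $\tilde\phi=\phi$ on $\Omega$.

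The final step is to replace $||\phi||_{X^{s,p,\beta}(\Omega)}$ by the Gagliardo seminorm alone. The seminorm part of the norm already appears; the task is to absorb the $L^p$-piece $\dint_\Omega |\phi|^p\,|x|^{-2\beta}\,dx$ into the right-hand side. Since $\Omega$ is bounded, say $\Omega \subset B_R(0)$, the pointwise estimate $|x|^{-2\beta} \le R^{ps}|x|^{-2\beta-ps}$ on $\Omega$ combined with Lemma \ref{cor} gives
$$\dint_\Omega \dfrac{|\phi|^p}{|x|^{2\beta}}\,dx \le R^{ps}\dint_\Omega \dfrac{|\phi|^p}{|x|^{2\beta+ps}}\,dx \le C\dint_\Omega\dint_\Omega \dfrac{|\phi(x)-\phi(y)|^p}{|x-y|^{N+ps}} \dfrac{dx\,dy}{|x|^\beta|y|^\beta},$$
and combining this with the previous chain of inequalities produces \eqref{CKNNN00}.

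The only step I expect to require care is making the application of Theorem \ref{CKN01} to $\tilde\phi$ fully rigorous, since $\tilde\phi$ a priori only lies in $X^{s,p,\beta}(\ren)$ rather than in $\mathcal{C}^\infty_0(\ren)$. One would truncate $\tilde\phi$ outside a large ball, mollify, apply \eqref{CKNNN} to the approximants, and pass to the limit using Fatou's lemma on the right-hand side. Everything else is bookkeeping with the estimates already established in Lemmas \ref{ext} and \ref{cor}.
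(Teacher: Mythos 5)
Your proposal follows the paper's proof almost exactly: extend $\phi$ via Lemma \ref{ext}, apply Theorem \ref{CKN01} to $\tilde\phi$, restrict the right-hand side to $\Omega$, and control the left-hand side by the Gagliardo seminorm of $\phi$ over $\Omega\times\Omega$. The one place you diverge is in how you pass from $\|\phi\|_{X^{s,p,\beta}(\Omega)}$ to the seminorm alone: the paper simply invokes the equivalence of norms stated in Remark \ref{equiv}, whereas you actually prove that equivalence by absorbing the weighted $L^p$ term through the bounded-domain Hardy inequality of Lemma \ref{cor} together with the pointwise bound $|x|^{-2\beta}\le R^{ps}|x|^{-2\beta-ps}$ on $\Omega\subset B_R(0)$; this is a strictly more careful version of the same step, and it supplies the justification that Remark \ref{equiv} leaves implicit. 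You are also right to flag that $\tilde\phi$ only lies in $X^{s,p,\beta}(\ren)$, so applying \eqref{CKNNN} requires a density argument (truncate, mollify, Fatou); the paper glosses over this. Both additions are consistent with, and tighten, the paper's argument.
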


\begin{proof}
Let $\phi\in \mathcal{C}^\infty_0(\Omega)$ and define
$\tilde{\phi}$ to be the extension of $\phi$ to $\ren$ given in
Lemma \ref{ext}, then using the fact that $\O$ is a regular
bounded domain, we reach that
$$
||\tilde{\phi}||_{X^{s,p,\beta}(\ren)}\le C_1
||\phi||_{X^{s,p,\beta}(\Omega)}\le C_1
\Big(\dint_{\O}\dint_{\O}\dfrac{|\phi(x)-\phi(y)|^p}{|x-y|^{N+ps}}\dfrac{dxdy}{|x|^\beta|y|^\beta}\Big)^{\frac
1p}.
$$
Now, applying Theorem \ref{CKN01} to $\tilde{\phi}$, it follows
that
$$
\dint\limits_{\re^N}\dint\limits_{\re^N}
\dfrac{|\tilde{\phi}(x)-\tilde{\phi}(y)|^p}{|x-y|^{N+ps}}\dfrac{dx}{|x|^{\beta}}
\dfrac{dy}{|y|^\beta}\ge S(\b)\Big(\dint\limits_{\re^N}
\dfrac{|\tilde{\phi}(x)|^{p^*_s}}{|x|^{2\beta\frac{p^*_s}{p}}}\,dx\Big)^{\frac{p}{p^*_s}}.
$$
Hence combining the above estimates we get the desired result.
\end{proof}

\section{Application}\label{appl}
In this section we deal with the next problem
\begin{equation}\label{prob}
\left\{
\begin{array}{rcl}
L_{p,s}\, u&=&\l\dfrac{u^{p-1}}{|x|^{ps}}+u^{q}, \quad  u>0  \hbox{ in  } \Omega,\\
u&=&0\quad \hbox{  in  } \mathbb{R}^N\setminus\Omega,
\end{array}
\right.
\end{equation}
where $$ L_{s,p}\, u:=\mbox{ P.V. }\int_{\mathbb{R}^{N}} \,
\frac{|u(x)-u(y)|^{p-2}(u(x)-u(y))}{|x-y|^{N+ps}}\,dy, $$ and
$0<\l\le \L_{N,p,s}$.

In the case where $0<q<p-1$, the existence result follows using
variational arguments. More precisely we have:

\begin{enumerate}
\item If $\l<\L_{N,p,s}$, then the existence of a solution $u$ to
\eqref{prob} follows using classical minimizing argument. In this
case $u\in W^{s,p}_0(\O)$.

\item If $\l=\L_{N,p,s}$, the existence result follows using the
improved Hardy inequality in Theorem \ref{main}. In this case $u$
satisfies $h_{s,\O}(u)<\infty$ where $h_{s,\O}$ is defined by
\begin{equation}\label{TRT}
h_{s,\O}(u)\equiv \dint_{\re^N}\dint_{\re^N}
\dfrac{|u(x)-u(y)|^{p}}{|x-y|^{N+ps}}dx dy -\Lambda_{N,p,s}
\dint_{\O} \dfrac{|u(x)|^p}{|x|^{ps}} dx.
\end{equation}
This clearly implies that
$$
\dint_{\Omega}\dint_{\Omega}\dfrac{|u(x)-u(y)|^p}{|x-y|^{N+qs}}dxdy<\infty
\mbox{  for all }q<p.
$$
\end{enumerate}

We deal now with the case $q>p-1$.

Define $w(x)=|x|^{-\g}$ with $0<\g<\dfrac{N-ps}{p-1}$, then we
have previously obtained that
$$
L_{s,p}(w)= \L(\g)\dfrac{w^{p-1}}{|x|^{ps}}\:\:a.e. \mbox{ in
}\ren\backslash\{0\},
$$
where
$$
\L(\g)=\dint\limits_1^{+\infty}K(\sigma)(\sigma^\g-1)^{p-1}\left(\sigma^{N-1-\g(p-1)}-\sigma^{ps-1}\right)\,d\sigma,
$$
and $K$ is given by \eqref{kkk}. Let us begin by proving the next
lemma.
\begin{Lemma}\label{lm01}
Assume that $0<\l<\L_{N,p,s}$, then there exist $\g_1,\g_2$ such
that
$$
0<\g_1<\frac{N-ps}{p}<\g_2,
$$
and $\L(\g_1)=\L(\g_2)=\l$.
\end{Lemma}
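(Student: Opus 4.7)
The plan is to deduce the existence of $\gamma_1,\gamma_2$ from three soft analytic facts about the real-valued function $\Lambda:(0,\tfrac{N-ps}{p-1})\to\R$ defined by the formula stated just before the lemma: continuity, vanishing at both endpoints of the admissible interval, and strict monotonicity on either side of $\gamma_0=\tfrac{N-ps}{p}$.

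First I would record the monotonicity. This is already essentially contained in the remark preceding Theorem \ref{YYY}: differentiating $\Lambda$ under the integral sign (justified because the kernel $K(\sigma)(\sigma^\gamma-1)^{p-2}$ is locally integrable in the relevant range) shows $\Lambda'(\gamma_0)=0$, $\Lambda'(\gamma)>0$ for $0<\gamma<\gamma_0$, and $\Lambda'(\gamma)<0$ for $\gamma_0<\gamma<\tfrac{N-ps}{p-1}$. Hence $\Lambda$ is strictly increasing on $(0,\gamma_0)$, strictly decreasing on $(\gamma_0,\tfrac{N-ps}{p-1})$, and attains its maximum at $\gamma_0$, whose value corresponds (up to the factor $2$ in Theorem \ref{YYY}) to the sharp Hardy constant.

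Next I would handle the boundary behavior. As $\gamma\to 0^+$, the factor $(\sigma^\gamma-1)^{p-1}$ tends pointwise to $0$ on $(1,\infty)$, while as $\gamma\to\bigl(\tfrac{N-ps}{p-1}\bigr)^-$ the bracket $\sigma^{N-1-\gamma(p-1)}-\sigma^{ps-1}$ collapses pointwise to $0$. Both integrands are easily dominated on $(1,\infty)$ by a single $L^1$ envelope uniform in $\gamma$, using the asymptotic estimates $K(\sigma)\lesssim|\sigma-1|^{-1-ps}$ near $\sigma=1$ and $K(\sigma)\lesssim\sigma^{-(N+ps)}$ as $\sigma\to\infty$ (precisely the estimates already exploited in Lemma \ref{ictp1}). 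The dominated convergence theorem then yields $\Lambda(\gamma)\to 0$ at both endpoints; continuity of $\Lambda$ on the open interval follows from the same dominated-convergence argument applied at any interior point.

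The conclusion is then the intermediate value theorem applied separately to each of the two monotone branches: on $(0,\gamma_0)$, $\Lambda$ is a strictly increasing continuous bijection onto $(0,\Lambda(\gamma_0))$; on $(\gamma_0,\tfrac{N-ps}{p-1})$, $\Lambda$ is a strictly decreasing continuous bijection onto the same interval. For the given $\lambda$ lying in this range, the unique preimages on the two branches provide the desired $\gamma_1<\gamma_0<\gamma_2$ with $\Lambda(\gamma_1)=\Lambda(\gamma_2)=\lambda$. The main technical point — really the only one that requires care — is producing the uniform $L^1$ majorant near $\sigma=1$, since there the decay $(\sigma-1)^{p-1}$ only barely beats the singularity $(\sigma-1)^{-1-ps}$ of $K$; but the combined exponent $p-1-ps>-1$ holds because $s<1$, which is precisely the integrability threshold used in Lemma \ref{ictp1} and makes the dominated convergence step valid.
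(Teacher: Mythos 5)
Your proposal is correct and follows essentially the same route the paper takes: establish the sign of $\Lambda'$ to get strict monotonicity on $(0,\gamma_0)$ and $(\gamma_0,\tfrac{N-ps}{p-1})$, note the vanishing (or sign change) of $\Lambda$ at both endpoints, and apply the intermediate value theorem to each monotone branch. One small precision worth flagging: the exponent $p-1-ps$ you quote near $\sigma=1$ is indeed the correct one, but it comes from three factors, namely $(\sigma-1)^{p-1}$ from $(\sigma^\gamma-1)^{p-1}$, $(\sigma-1)^{-1-ps}$ from $K$, \emph{and} an extra $(\sigma-1)^1$ because the bracket $\sigma^{N-1-\gamma(p-1)}-\sigma^{ps-1}$ also vanishes at $\sigma=1$; your phrasing attributes it to only the first two, which would give $p-2-ps$ and would not suffice in general.
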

\begin{proof}
We have $\L(0)=0, \L(\frac{N-ps}{p})=\L_{N,p,s}$, $\L(\g)<0$ if
$\g>\frac{N-ps}{p-1}$ and
$$
\L'(\g)=(p-1)\dint\limits_1^{+\infty}K(\sigma)\log(\s)(\sigma^\g-1)^{p-2}\left(\sigma^{N-1-\g(p-1)}-\sigma^{ps+\g-1}\right)\,d\sigma.
$$
It is clear that for $\g_0=\frac{N-ps}{p}$, we have $\L'(\g_0)=0$,
$\L'(\g)>0$ if $\g<\g_0$ and $\L'(\g)<0$ if $\g>\g_0$.

Hence, since $\l<\L_{N,p,s}$, we get the existence of
$0<\g_1<\frac{N-ps}{p}<\g_2<\frac{N-ps}{p-1}$ such that
$\L(\g_1)=\L(\g_2)=\l$.
\end{proof}
$$ \centering \includegraphics[width=7cm,height=50mm]{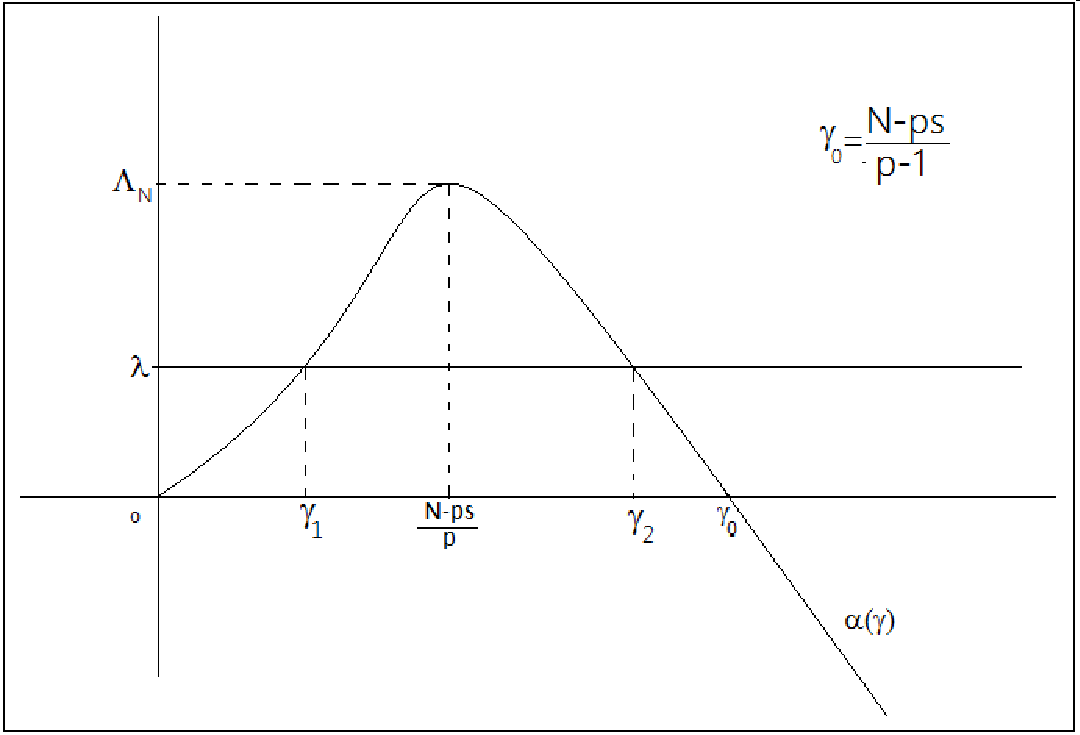}$$ \\

Define $q_+(p,s)=p-1+\frac{ps}{\g_1}$, it is clear that
$p^*_s-1<q_+(p,s)$. We have the next existence result.

\begin{Theorem}\label{th:exis}
Assume that $q<q_+(p,s)$, then
\begin{enumerate}
\item If $p-1<q<p^*_s-1$, problem \eqref{prob} has a solution $u$.
Moreover, $u\in W^{s,p}_0(\O)$ if $\l<\L_{N,p,s}$ and
$h_{s,\O}(u)<\infty$ if $\l=\L_{N,p,s}$ where $h_{s,\O}$ is
defined in \eqref{TRT}.

\item If $p^*_s-1\le q<q_+(p,s)$, then problem \eqref{prob} has a
positive supersolution $u$.
\end{enumerate}
\end{Theorem}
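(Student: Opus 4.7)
The plan is to split along the two cases stated in the theorem. In case (1), the growth $q+1<p^*_s$ is subcritical and I would use variational (mountain--pass) arguments; in case (2), $p^*_s-1\le q<q_+(p,s)$ and I would exhibit an explicit supersolution of power type, so the variational setting is unnecessary (indeed unavailable).

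\medskip

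\emph{Part (1), variational approach.} Consider the energy functional
\begin{equation*}
J(u) = \frac{1}{p}\iint_{\mathbb{R}^{2N}}\frac{|u(x)-u(y)|^p}{|x-y|^{N+ps}}\,dx\,dy - \frac{\lambda}{p}\int_\Omega\frac{|u|^p}{|x|^{ps}}\,dx - \frac{1}{q+1}\int_\Omega (u_+)^{q+1}\,dx.
\end{equation*}
If $\lambda<\Lambda_{N,p,s}$, I would take the ambient space to be $W^{s,p}_0(\Omega)$: by the Hardy inequality \eqref{hardy} the first two terms define an equivalent norm to the $p$-th power of the $W^{s,p}_0$-norm, so $J$ enjoys the mountain-pass geometry since $q+1>p$; compactness of the embedding $W^{s,p}_0(\Omega)\hookrightarrow L^{q+1}(\Omega)$ for $q+1<p^*_s$ provides Palais--Smale. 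The mountain-pass theorem produces a nontrivial critical point, and testing against $u_-$ gives positivity. If instead $\lambda=\Lambda_{N,p,s}$, I would work on the completion $H$ of $\mathcal{C}^\infty_0(\Omega)$ under $h_{s,\Omega}(\cdot)^{1/p}$, which is a norm by \eqref{hardy}. For any auxiliary $q'\in(1,p)$, Theorem~\ref{main} gives
\begin{equation*}
h_{s,\Omega}(u)\;\ge\;C\iint_{\Omega\times\Omega}\frac{|u(x)-u(y)|^p}{|x-y|^{N+q's}}\,dx\,dy,
\end{equation*}
so $H$ embeds continuously in $X^{q's/p,p}_0(\Omega)$ and hence in $L^{q+1}(\Omega)$ provided $q+1<pN/(N-q's)$. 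Since $q+1<p^*_s=pN/(N-ps)$ the latter holds for $q'$ close enough to $p$, at which point the embedding is compact by the fractional Rellich--Kondrachov theorem, and the same mountain-pass scheme yields a positive $u\in H$ solving \eqref{prob}.

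\medskip

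\emph{Part (2), supersolution.} I would look for a radial supersolution $w(x)=C|x|^{-\theta}$ with $\theta,C>0$ to be chosen. Reproducing the computation in Lemma~\ref{ictp1} with $\beta=0$ gives
\begin{equation*}
L_{s,p}(w)(x)=\Lambda(\theta)\,\frac{w^{p-1}(x)}{|x|^{ps}} \quad \text{a.e.\ in } \mathbb{R}^N\setminus\{0\},
\end{equation*}
so the pointwise supersolution inequality for \eqref{prob} reduces to
\begin{equation*}
(\Lambda(\theta)-\lambda)\,C^{p-1-q}\,|x|^{\theta(q-p+1)-ps}\;\ge\;1.
\end{equation*}
The balancing choice $\theta=ps/(q-p+1)$ cancels the $|x|$-dependence. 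The hypothesis $q\ge p^*_s-1$ forces $\theta\le(N-ps)/p=\gamma_0$, while $q<q_+(p,s)$ forces $\theta>\gamma_1$; by Lemma~\ref{lm01} this implies $\Lambda(\theta)>\lambda$. Since $q>p-1$, one has $p-1-q<0$, so the remaining inequality $(\Lambda(\theta)-\lambda)C^{p-1-q}\ge1$ is satisfied by taking $C>0$ small enough.

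\medskip

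The principal obstacle is the limiting Hardy case $\lambda=\Lambda_{N,p,s}$ of part~(1): the classical Sobolev setting no longer produces a coercive functional, and one is compelled to work on the abstract completion $H$. All compactness information for $H$ must be extracted from the improved Hardy inequality of Theorem~\ref{main}, and the auxiliary exponent $q'\in(1,p)$ must be chosen so that \emph{both} Theorem~\ref{main} and the fractional Sobolev embedding into $L^{q+1}(\Omega)$ apply simultaneously; the subcriticality window $q+1<p^*_s$ is precisely what makes such a choice possible.
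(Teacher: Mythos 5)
Your argument is correct; part~(1) essentially mirrors the paper, while part~(2) takes a genuinely different route. In part~(1) the paper simply cites the Mountain Pass Theorem and, when $\lambda=\Lambda_{N,p,s}$, says ``use the improved Hardy inequality''; you make this precise by completing $\mathcal{C}^\infty_0(\Omega)$ under $h_{s,\Omega}^{1/p}$ and choosing the auxiliary index $q'<p$ from Theorem~\ref{main} close enough to $p$ so that $H$ embeds compactly into $L^{q+1}(\Omega)$, which is exactly the content the paper leaves implicit. For part~(2), the paper picks a level $\lambda_1\in(\lambda,\Lambda_{N,p,s})$ close to $\lambda$, sets $w=|x|^{-\gamma_1(\lambda_1)}$ so that $L_{s,p}w=\lambda_1 w^{p-1}/|x|^{ps}$, and absorbs the surplus $(\lambda_1-\lambda)w^{p-1}/|x|^{ps}$ into $C(\Omega)w^{q}$ using the boundedness of $\Omega$ together with the exponent comparison $\gamma_1(p-1)+ps>q\gamma_1$ guaranteed by $q<q_+(p,s)$. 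You instead fix the balanced exponent $\theta=ps/(q-p+1)$ that equalizes the homogeneities in $|x|$ exactly, reducing the supersolution requirement to the scalar inequality $(\Lambda(\theta)-\lambda)C^{p-1-q}\ge 1$; the hypotheses $p^*_s-1\le q<q_+(p,s)$ place $\theta\in(\gamma_1,\gamma_0]$, and the strict monotonicity of $\Lambda$ on $(0,\gamma_0)$ (established inside the proof of Lemma~\ref{lm01}, not in its statement) yields $\Lambda(\theta)>\lambda$, after which a small $C>0$ closes the inequality since $p-1-q<0$. Your version is cleaner, produces a supersolution on all of $\mathbb{R}^N\setminus\{0\}$, and avoids the perturbation in $\lambda$; the paper's version avoids singling out a special exponent and instead tailors the constant $C(\Omega)$ to the bounded domain, which makes the role of $q_+(p,s)$ as a limiting threshold slightly more visible.
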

\begin{proof}
Let us begin with the case where $p-1<q<p^*_s-1$. If
$\l<\L_{N,p,s}$, then using the Mountain Pass Theorem, see
\cite{SV}, we get a positive solution $u\in W^{s,p}_0(\O)$.
However, if $\l=\L_{N,p,s}$, then using the improved Hardy
inequality in Theorem \ref{main} and the Mountain pass Theorem, we
reach a positive solution $u$ to problem \eqref{prob} with
$h_{s,\O}(u)<\infty$.

Assume now that $p^*_s-1\le q<q_+(p,s)$ and fix $\l_1\in
(\l,\L_{N,p,s})$ to be chosen later.

Let $\g_1\in (0,\frac{N-ps}{p})$ be such that $\G(\g_1)=\l_1$ and
set $w(x)=|x|^{-\g_1}$, then
$$
L_{s,p}(w)= \l_1\dfrac{w^{p-1}(x)}{|x|^{ps}}\:\:\: a.e \mbox{ in
}\ren\backslash\{0\}
$$
with  $\dfrac{w^{p-1}}{|x|^{ps}}\in L^1_{loc}(\ren)$. Hence
$$
L_{s,p}(w)=
\l\dfrac{w^{p-1}(x)}{|x|^{ps}}+(\l_1-\l)\dfrac{w^{p-1}(x)}{|x|^{ps}}\:\:\:
a.e \mbox{ in }\ren\backslash\{0\}.
$$
Using the fact that $q<q_+(p,s)$, we can choose $\l_1>\l$, very
close to $\l$ such that $\g_1(p-1)+ps>q\g_1$, thus, in any bounded
domain $\O$, we have
$$
(\l_1-\l)\dfrac{w^{p-1}(x)}{|x|^{ps}}\ge C(\O)w^q.
$$
Define $\hat{w}=Cw$, by the previous estimates, we can choose
$C(\O)>0$ such that $\hat{w}$ will be a supersolution to
\eqref{prob} in $\O$. Hence the result follows.

\end{proof}

Now, we show the optimality of the exponent $q_+(p,s)$. We have
the following non existence result.

\begin{Theorem}\label{th:non}
Let $q_+(p,s)=p-1+\frac{ps}{\g_1}$. If $q>q_+(p,s)$, then the
unique nonnegative supersolution $u\in W^{s,p}_{loc}(\O)$ to
problem \eqref{prob} is $u\equiv 0$.
\end{Theorem}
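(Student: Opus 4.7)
\bigskip
\noindent\textbf{Plan of proof.}

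The argument will proceed by contradiction, in the spirit of \cite{BDT,APV,BMP,F}. The plan is to suppose $u\in W^{s,p}_{loc}(\Omega)$ is a nontrivial nonnegative supersolution of \eqref{prob} and to derive a contradiction by iteratively blowing up the singularity of $u$ at the origin.

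First I would show, via the strong maximum principle for $L_{s,p}$, that $u>0$ in $\Omega$, and establish an initial singular lower bound
$$u(x)\ge c_{0}\,|x|^{-\gamma_{1}}\qin B_{r_{0}}(0)$$
for some $r_{0},c_{0}>0$. The key observation is that, by Lemma \ref{ictp1} taken with $\beta=0$, the function $w_{1}(x)=|x|^{-\gamma_{1}}$ satisfies $L_{s,p}w_{1}=\lambda\,w_{1}^{p-1}/|x|^{ps}$, while $u$ is a supersolution of the same equation once the nonnegative source $u^{q}$ is discarded. I would then compare, via Lemma \ref{compaa} applied on an annulus $B_{R}\setminus\overline{B_{\rho}}$ with a small multiple of $w_{1}$ fitted below $u$ on $\partial B_{R}$, and let $\rho\to 0$ to obtain the claimed bound.

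The core step will be an iteration. Set
$$T(\alpha)=\dfrac{q\alpha-ps}{p-1},\qquad \alpha^{*}=\dfrac{ps}{q-p+1}.$$
The hypothesis $q>q_{+}(p,s)=p-1+ps/\gamma_{1}$ is equivalent to $\alpha^{*}<\gamma_{1}$, so $T(\gamma_{1})>\gamma_{1}$; since $T$ is affine with slope $q/(p-1)>1$, the iterates $\alpha_{n}=T^{n}(\gamma_{1})$ will strictly increase to $+\infty$. Assuming inductively $u\ge c_{n}|x|^{-\alpha_{n}}$ on $B_{r_{n}}$, the supersolution property gives $L_{s,p}u\ge u^{q}\ge c_{n}^{q}\,|x|^{-q\alpha_{n}}$; and, by Lemma \ref{ictp1}, a suitable constant multiple of $|x|^{-\alpha_{n+1}}$ satisfies $L_{s,p}(c|x|^{-\alpha_{n+1}})\le c_{n}^{q}|x|^{-q\alpha_{n}}$ precisely because $\alpha_{n+1}(p-1)+ps=q\alpha_{n}$. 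Comparing $u$ with this multiple on a slightly smaller ball $B_{r_{n+1}}$---either by Lemma \ref{compaa} or by testing the weak formulation against a Picone-type function $\phi=\eta^{p}/(u+\eps)^{p-1}$ and letting $\eps\to 0$---upgrades the bound to $u\ge c_{n+1}|x|^{-\alpha_{n+1}}$.

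Finally, since $\alpha_{n}\to+\infty$, for $n$ large one has $\alpha_{n}(p-1)\ge N$, and then $u(x)\ge c_{n}|x|^{-\alpha_{n}}$ forces $\int_{B_{r_{n}}}u^{p-1}\,dx=+\infty$, which contradicts $u\in W^{s,p}_{loc}(\Omega)\hookrightarrow L^{p}_{loc}(\Omega)$. The hardest part will be the inductive step: because $L_{s,p}$ is nonlinear and nonlocal, each iteration will require a fresh truncation/cutoff procedure together with careful control of the tail contributions coming from $\ren\setminus B_{r_{n}}$; the sequence of radii $r_{n}$ must moreover be kept bounded away from zero so that the final $L^{p-1}$ obstruction really occurs inside $\Omega$. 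The choice of the smaller root $\gamma_{1}$ rather than $\gamma_{2}$ in the first step is essential, since only $\gamma_{1}$ produces a fixed-point analysis of $T$ matching exactly the threshold $q_{+}(p,s)$.
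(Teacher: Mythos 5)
Your proposal is conceptually sound but follows a genuinely different route from the paper's proof. You propose the classical iterative blow-up in the tradition of Brezis--Dupaigne--Tesei \cite{BDT} and \cite{APV}: starting from the initial lower bound $u\ge c_{0}|x|^{-\g_{1}}$, the source term $u^{q}$ is fed back through the comparison principle to bootstrap the singular exponent along $\alpha_{n+1}=T(\alpha_{n})$ until $\alpha_{n}\to\infty$ and $u^{p-1}\notin L^{1}_{\loc}$, contradicting $u\in W^{s,p}_{\loc}(\O)$. The paper instead stops after the \emph{single} comparison step encapsulated in Lemma \ref{lm:estim} (which delivers $u\ge C|x|^{-\g_{1}}$ near the origin by comparing with the truncated ground state $\tilde w$) and then invokes Picone's inequality, Lemma \ref{pic}, directly: for any test function $\phi\in\mathcal{C}^{\infty}_{0}(B_{\eta}(0))$,
\begin{equation*}
\|\phi\|_{X^{s,p}_{0}(\ren)}^{p}\ \ge\ \int_{B_{\eta}(0)}\frac{L_{p,s}u}{u^{p-1}}\,|\phi|^{p}\,dx\ \ge\ \int_{B_{\eta}(0)}u^{q-(p-1)}|\phi|^{p}\,dx\ \ge\ C\int_{B_{\eta}(0)}\frac{|\phi|^{p}}{|x|^{ps+\rho}}\,dx,
\end{equation*}
where $q>q_{+}(p,s)=p-1+ps/\g_{1}$ is exactly what produces the strictly positive excess $\rho>0$ in the singular exponent. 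This is a Hardy-type inequality with a super-critical weight, which contradicts the domain-independence and optimality of the Hardy constant $\L_{N,p,s}$ established in Lemma \ref{indep}. The two routes differ in what closes the argument: yours uses a local integrability obstruction obtained after infinitely many bootstrap steps, while the paper's uses a spectral (optimal-constant) obstruction obtained after one step. The paper's one-shot argument is shorter and, crucially, avoids exactly the difficulties you yourself flag as the hardest part of your plan: there is no need to track nonlocal tail contributions from $\ren\setminus B_{r_{n}}$ across iterations nor to keep a decreasing sequence of radii bounded away from zero, since the optimality of the Hardy constant is scale- and domain-independent. Your plan is in principle workable --- the affine map $T$, its fixed point $\alpha^{*}=ps/(q-p+1)$, and the equivalence $q>q_{+}(p,s)\Leftrightarrow\alpha^{*}<\g_{1}$ are all correct, as is the final $L^{p-1}_{\loc}$ obstruction --- but carrying out each inductive comparison for a nonlocal, nonlinear operator with uniform control of tails would require substantially more machinery than the single Picone step. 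Your identification of the smaller root $\g_{1}$ as essential is spot on; both arguments rely identically on the initial estimate $u\ge C|x|^{-\g_{1}}$ from Lemma \ref{lm:estim}.
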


We first prove the next lemma which shows that the Hardy constant
is independent of the domain.

\begin{Lemma}\label{indep}
Let $\O\subset \ren$ be a regular domain such that $0\in \O$.
Define
$$
\L(\O)=\inf_{\{\phi\in \mathcal{C}^\infty_0(\O)\backslash
0\}}\dfrac{\dint_{\re^N}\dint_{\re^N}
\dfrac{|\phi(x)-\phi(y)|^p}{|x-y|^{N+ps}}dxdy}{\dyle\io\dfrac{|\phi(x)|^p}{|x|^{ps}}dx},
$$
then $\L(\O)=\L_{N,p,s}$ defined in \eqref{LL}.
\end{Lemma}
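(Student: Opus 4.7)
\bigskip

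\noindent\textbf{Proof proposal for Lemma \ref{indep}.} The plan is to prove the two inequalities $\Lambda(\Omega)\geq \Lambda_{N,p,s}$ and $\Lambda(\Omega)\leq \Lambda_{N,p,s}$ separately, with the second direction relying on the scale invariance of the Rayleigh quotient.

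For the lower bound $\Lambda(\Omega)\geq \Lambda_{N,p,s}$, I would simply observe that every $\phi\in \mathcal{C}^\infty_0(\Omega)$, extended by zero outside $\Omega$, belongs to $\mathcal{C}^\infty_0(\ren)$; moreover, since $\phi$ vanishes in $\ren\setminus \Omega$, the weighted integral $\io |\phi(x)|^p/|x|^{ps}\,dx$ coincides with $\irn |\phi(x)|^p/|x|^{ps}\,dx$. Consequently the Rayleigh quotient defining $\Lambda(\Omega)$ is the restriction to a smaller admissible class of the Rayleigh quotient defining $\Lambda_{N,p,s}$ in \eqref{LL}, so taking infima yields $\Lambda(\Omega)\geq \Lambda_{N,p,s}$.

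For the opposite inequality, the key tool is the invariance of the quotient under the dilation $\phi_\lambda(x)=\phi(\lambda x)$ with $\lambda>0$. A direct change of variables $x'=\lambda x$, $y'=\lambda y$ in the double integral shows
\[
\dint_{\ren}\dint_{\ren}\dfrac{|\phi_\lambda(x)-\phi_\lambda(y)|^p}{|x-y|^{N+ps}}\,dx\,dy = \lambda^{ps-N}\dint_{\ren}\dint_{\ren}\dfrac{|\phi(x)-\phi(y)|^p}{|x-y|^{N+ps}}\,dx\,dy,
\]
and analogously $\irn |\phi_\lambda(x)|^p/|x|^{ps}\,dx=\lambda^{ps-N}\irn |\phi(x)|^p/|x|^{ps}\,dx$. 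Hence the Rayleigh quotient of $\phi_\lambda$ equals that of $\phi$. Since $0\in \Omega$ is an interior point and $\mathrm{supp}(\phi_\lambda)=\lambda^{-1}\mathrm{supp}(\phi)$ shrinks toward the origin as $\lambda\to\infty$, for $\lambda$ sufficiently large we have $\phi_\lambda\in \mathcal{C}^\infty_0(\Omega)$. Therefore $\Lambda(\Omega)$ is bounded above by the Rayleigh quotient of any $\phi\in \mathcal{C}^\infty_0(\ren)\setminus\{0\}$, and passing to the infimum gives $\Lambda(\Omega)\leq \Lambda_{N,p,s}$.

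Combining the two bounds establishes $\Lambda(\Omega)=\Lambda_{N,p,s}$. I do not foresee a real obstacle: the argument is essentially the standard scaling proof that Hardy constants are independent of the domain whenever the singularity lies in the interior; the only point requiring care is to verify the exponent bookkeeping in the dilation, which follows from the homogeneity degrees $N+ps$ and $ps$ of the two integrals.
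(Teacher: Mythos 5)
Your proof is correct, and it takes a more streamlined route than the paper's. Both arguments hinge on the scale invariance of the Rayleigh quotient, but you apply it directly: for every $\phi\in\mathcal{C}^\infty_0(\ren)$ you dilate it so that its (shrinking) support lands inside $\Omega$, obtaining $\Lambda(\Omega)\le Q(\phi_\lambda)=Q(\phi)$, and then pass to the infimum. The paper instead first proves an intermediate domain-independence statement: it uses the monotonicity $\Omega_1\subset\Omega_2\Rightarrow\Lambda(\Omega_1)\ge\Lambda(\Omega_2)$ together with the dilation identity $\Lambda(B_{R_1}(0))=\Lambda(B_{R_2}(0))$ to argue that $\Lambda(\Omega)\equiv\bar\Lambda$ is one fixed constant, and only then compares $\bar\Lambda$ with $\Lambda_{N,p,s}$ by taking a minimizing sequence $\phi_n$ and invoking a symmetrization argument to control its support. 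Your version avoids the symmetrization step (which implicitly relies on the nontrivial fact that symmetric decreasing rearrangement does not increase the Gagliardo seminorm) and avoids the sandwich $B_{R_1}(0)\subset\Omega\subset B_{R_2}(0)$, which strictly speaking requires $\Omega$ bounded even though the lemma statement does not assume it. In short, your argument is a genuine simplification of the paper's proof, proving the same two inequalities by the same underlying mechanism but with fewer moving parts; the exponent bookkeeping $\lambda^{ps-N}$ appearing in both numerator and denominator is exactly right.
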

\begin{proof}
Recall that
$$
\L_{N,p,s}=\inf_{\{\phi\in \mathcal{C}^\infty_0(\ren)\backslash
0\}}\dfrac{\dint_{\re^N}\dint_{\re^N}
\dfrac{|\phi(x)-\phi(y)|^p}{|x-y|^{N+ps}}dxdy}{\dyle\irn\dfrac{|\phi(x)|^p}{|x|^{ps}}dx},$$
thus $\L(\O)\ge \L_{N,p,s}$. It is clear that if $\O_1\subset
\O_2$, then $\L(\O_1)\ge \L(\O_2)$.

Now, using a dilatation argument we can prove that
$\L(B_{R_1}(0))=\L(B_{R_2}(0))$ for all $0<R_1<R_2$. Hence we
conclude that $\L(\O)\equiv \bar{\L}$ does not depend of the
domain $\O$.

For $\phi\in \mathcal{C}^\infty_0(\ren)$, we set
$$
Q(\phi)\equiv \dfrac{\dint_{\re^N}\dint_{\re^N}
\dfrac{|\phi(x)-\phi(y)|^p}{|x-y|^{N+ps}}dxdy}{\dyle\irn\dfrac{|\phi(x)|^p}{|x|^{ps}}dx}.
$$
Let $\{\phi_n\}_n\subset \mathcal{C}^\infty_0(\ren)$ be such that
$Q(\phi_n)\to \L_{N,p,s}$. Without loss of generality and using a
symmetrization argument we can assume that
$\text{Supp}(\phi_n)\subset B_{R_n}(0)$. It is clear that
$Q(\phi_n)\ge \L(\text{Supp}(\phi_n))=\bar{\L}$, thus, as $n\to
\infty$, it follows that $\bar{\L}\le \L_{N,p,s}$. As a conclusion
we reach that $\bar{\L}=\L_{N,p,s}$ and the result follows.
\end{proof}

We need the next lemma.
\begin{Lemma}\label{lm:estim} Let $\O$ be
a bounded domain such that $0\in \O$. Assume that $u\in
W^{s,p}(\ren)$ is such that $u\ge 0$ in $\ren$, $u>0$ in $\O$ and
$L_{N,p,s}u\gneqq \l\dfrac{u^{p-1}}{|x|^{ps}}$ in $\O$, then there
exists $C>0$ such that $u(x)\ge C|x|^{-\g_1}$ in $B_\eta(0)$
where $\g_1$ is defined in Lemma \ref{lm01}.
\end{Lemma}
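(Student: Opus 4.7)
The strategy is to bound $u$ from below by an explicit subsolution of the homogeneous Hardy equation $L_{s,p}v = \l v^{p-1}/|x|^{ps}$ exhibiting the desired singular behaviour $|x|^{-\g_1}$ at the origin, and then invoke a Picone-based comparison using the strict super-solution hypothesis.

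First, fix $r_0>0$ with $\overline{B_{2r_0}(0)}\subset\Omega$. On the annulus $A:=\overline{B_{2r_0}(0)}\setminus B_{r_0}(0)$ the Hardy potential $|x|^{-ps}$ is bounded, so the inequality $L_{s,p}u\gneqq \l u^{p-1}/|x|^{ps}$ reduces to a supersolution of a nonlocal equation with a bounded zero-order term. Since $u\in W^{s,p}(\ren)$ with $u>0$ in $\Omega$, a standard nonlocal lower bound for positive supersolutions yields $\delta>0$ with $u\ge\delta$ a.e. on $A$.

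Second, I construct the explicit subsolution. By Lemma \ref{ictp1} (with $\beta=0$) and Lemma \ref{lm01}, the function $w(x):=|x|^{-\g_1}$ satisfies $L_{s,p}w=\L(\g_1)w^{p-1}/|x|^{ps}=\l w^{p-1}/|x|^{ps}$ pointwise on $\ren\setminus\{0\}$. Define the truncated candidate
\begin{equation*}
\bar w(x):= c\bigl(|x|^{-\g_1}-r_0^{-\g_1}\bigr)_+,
\end{equation*}
which lies in $W^{s,p}(\ren)$ and vanishes outside $B_{r_0}$. Splitting the principal-value integral defining $L_{s,p}\bar w$ into contributions from $B_{r_0}$ and its complement, and using the monotonicity of $t\mapsto|t|^{p-2}t$ together with the fact that $w(y)\le r_0^{-\g_1}$ for $|y|\ge r_0$, one checks that $L_{s,p}\bar w(x)\le c^{p-1}L_{s,p}w(x)$ in $B_{r_0}$. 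Because the ratio $\bar w(x)/(c\,w(x))\to 1$ uniformly as $|x|\to 0$, this yields
\begin{equation*}
L_{s,p}\bar w(x)\,\le\,\l\,\frac{\bar w^{p-1}(x)}{|x|^{ps}}\qquad\text{in }B_{\eta_0}(0)
\end{equation*}
for some $\eta_0\ll r_0$. Then choose $c>0$ small enough that $c\,r_0^{-\g_1}\le\delta/2$, so that $\bar w\le\delta\le u$ on $A$ and $\bar w\equiv 0\le u$ on $\ren\setminus B_{r_0}$.

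Third, I would run the comparison. Lemma \ref{compaa} is not directly applicable because $g(t,x)=\l t^{p-1}/|x|^{ps}$ has $g(t,x)/t^{p-1}$ only non-strictly monotone in $t$, but the strict supersolution hypothesis supplies the missing slack: writing $L_{s,p}u=\l u^{p-1}/|x|^{ps}+\mu$ with $\mu\ge 0$ and $\mu\not\equiv 0$, the Picone computation underlying Lemma \ref{compaa}, applied with test function $\xi=(\bar w^p-u^p)_+$ supported in $B_{\eta_0}$, gives
\begin{equation*}
0\;\le\;\int_{B_{\eta_0}} \mu\,\frac{\xi}{u^{p-1}}\,dx\;\le\;\int_{B_{\eta_0}}\xi\Big(\frac{L_{s,p}u}{u^{p-1}}-\frac{L_{s,p}\bar w}{\bar w^{p-1}}\Big)\,dx\;\le\;0.
\end{equation*}
Hence $\xi\equiv 0$, i.e.\ $u\ge\bar w$ in $B_{\eta_0}$; on any smaller ball $B_\eta\subset B_{\eta_0}$ this gives $u(x)\ge\frac{c}{2}|x|^{-\g_1}$.

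The main obstacle is step two, the construction of a subsolution at the \emph{sharp} exponent $\g_1$: since $\L(\g_1)=\l$ exactly, no quantitative room is left in the Hardy identity for $w$, and the effect of the outer truncation must be controlled precisely as $|x|\to 0$. If this direct argument at $\g_1$ is too delicate, one falls back on a two-parameter subsolution with $\hat\g<\g_1$ (so that $\L(\hat\g)<\l$ provides strict slack) and a final limit argument $\hat\g\uparrow\g_1$, in which the lower bound $u\ge\delta$ on the annulus $A$ keeps the constants $c=c(\hat\g)$ uniformly bounded away from $0$.
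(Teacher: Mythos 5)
Your overall strategy coincides with the paper's: prove a uniform lower bound for $u$ away from the origin via a nonlocal Harnack-type argument, build a truncated power barrier $\bar w(x)=c(|x|^{-\g_1}-r_0^{-\g_1})_+$, and conclude $u\ge\bar w$ by a Picone-type comparison. The paper does exactly this with $r_0=1$ and $\bar B_1(0)\subset\O$, and uses Lemma \ref{compaa} at the end.

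The genuine gap is in your step two, i.e.\ the verification that $\bar w$ is a subsolution of $L_{s,p}v\le \l\,v^{p-1}/|x|^{ps}$ near the origin. The chain you propose does not close. You correctly obtain, for $x\in B_{r_0}$,
\begin{equation*}
L_{s,p}\bar w(x)\;=\;c^{p-1}L_{s,p}w(x)\;-\;c^{p-1}\int_{|y|\ge r_0}\frac{(w(x)-w(y))^{p-1}-(w(x)-r_0^{-\g_1})^{p-1}}{|x-y|^{N+ps}}\,dy\;\le\; c^{p-1}L_{s,p}w(x),
\end{equation*}
but substituting $L_{s,p}w=\l\,w^{p-1}/|x|^{ps}$ gives $L_{s,p}\bar w\le \l\,(cw)^{p-1}/|x|^{ps}$, which is an upper bound \emph{larger} than the target $\l\,\bar w^{p-1}/|x|^{ps}=\l\,c^{p-1}(w-r_0^{-\g_1})^{p-1}/|x|^{ps}$, since $\bar w< cw$. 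So the implication ``$L_{s,p}\bar w\le c^{p-1}L_{s,p}w$ and $\bar w/(cw)\to 1$, hence $L_{s,p}\bar w\le\l\,\bar w^{p-1}/|x|^{ps}$'' goes the wrong way: because $\L(\g_1)=\l$ exactly, there is no spectral margin to absorb the positive difference $w^{p-1}-(w-r_0^{-\g_1})^{p-1}$, and the ``ratio $\to 1$'' heuristic produces nothing. What is actually needed is to retain the explicitly nonnegative correction term coming from the integral over $|y|\ge r_0$ and show that it dominates the error $\l\big(w^{p-1}-(w-r_0^{-\g_1})^{p-1}\big)/|x|^{ps}$; this is precisely what the paper does by computing the kernel action on $\tilde w$ exactly and expressing it as $h(x)$ times the reference quotient, then comparing $h$ to $\l$ using the defining identity $\L(\g_1)=\l$. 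Your crude one-sided bound throws away the favourable correction and therefore cannot recover the subsolution inequality.

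Your proposed fallback, replacing $\g_1$ by $\hat\g<\g_1$ to get strict slack $\L(\hat\g)<\l$ and then letting $\hat\g\uparrow\g_1$, also does not go through as written. With the same crude bound, the subsolution inequality $L_{s,p}\bar w_{\hat\g}\le\l\,\bar w_{\hat\g}^{p-1}/|x|^{ps}$ requires $1-(|x|/r_0)^{\hat\g}\ge (\L(\hat\g)/\l)^{1/(p-1)}$, i.e.\ $|x|\le r_0\big(1-(\L(\hat\g)/\l)^{1/(p-1)}\big)^{1/\hat\g}$. As $\hat\g\uparrow\g_1$ the right-hand side tends to $0$, so the radius $\eta(\hat\g)$ on which the barrier works collapses and no estimate $u\ge C|x|^{-\g_1}$ on a fixed ball $B_\eta$ is obtained in the limit; the lower bound $u\ge\delta$ on the annulus controls the coefficient $c(\hat\g)$ but not the shrinking of the comparison region. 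To make the argument rigorous at the sharp exponent you must carry the exact nonlocal error term, as the paper does in constructing the function $h$.

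A lesser point: Lemma \ref{compaa} formally requires $f(\s)/\s^{p-1}$ to be (strictly) decreasing, which fails for $f(\s)=\l\,\s^{p-1}/|x|^{ps}$, so some remark on why the Picone computation still forces $\xi\equiv 0$ is indeed needed; you noticed this and proposed to use the strict inequality $\mu\gneqq 0$, which is the right instinct, though one must also make sure that $\mu$ is not concentrated away from the support of $\xi$. The paper glosses over this and simply cites Lemma \ref{compaa}.
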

\begin{proof}
Without loss of generality we can assume that $B_1(0)\subset \O$.

Fixed $\l<\L_{N,p,s}$ and define
\begin{equation*}
\tilde{w}(x)= \left\{
\begin{array}{lll}
&|x|^{-\g_1}-1 &\mbox{  if }|x|<1,\\
& 0     &\mbox{   if }|x|>1. \end{array} \right. \end{equation*}
It is clear that $\tilde{w}\in W^{s,p}_0(B_1(0))$ and
\begin{equation}\label{www00}
\left\{
\begin{array}{rcl}
L_{p,s}\, \tilde{w} &=&h(x)\dfrac{\tilde{w}^{p-1}}{|x|^{ps}} \hbox{ in  } B_1(0),\\
\tilde{w}&=& 0\quad \hbox{  in  } \mathbb{R}^N\setminus B_1(0)
\end{array}
\right.
\end{equation}
where
$$
h(x)=\int_0^{\frac{1}{|x|}}|1-\s^{-\tilde{\g}}|^{p-2}(1-\s^{-\tilde{\g}})\s^{N-1}K(\s)d\s+(1-|x|^{\tilde{\g}})\int_{\frac{1}{|x|}}^\infty
\s^{N-1}K(\s)d\s.
$$
Using the definition of $\g_1$, see  Lemma \ref{lm01}, we can
prove that $h(x)\le \l$ for all $x\in B_1(0)$.

Since $L_{p,s}u\gneqq 0$ and $u>0$ in $\O$, then using the
nonlocal weak Harnack inequality in \cite{DI}, we get the
existence of $\e>0$ such that $u\ge \e$ in $\bar{B}_1(0)$.

Therefore we obtain that

\begin{equation}\label{www11}
\left\{
\begin{array}{lll}
L_{p,s}\, u &\ge & \l\dfrac{u^{p-1}}{|x|^ps}\mbox{  in   }B_1(0),\\
L_{p,s}\, \tilde{w} &\le & \l\dfrac{\tilde{w}^{p-1}}{|x|^{ps}}, \hbox{ in  } B_1(0),\\
u &\ge & \tilde{w}\quad \hbox{  in  } \mathbb{R}^N\setminus
B_1(0).
\end{array}
\right.
\end{equation}
Thus by the comparison principle in Lemma \ref{compaa}, it follows
that $\tilde{w}\le u$ which is the desired result.
\end{proof}

We are now in position to prove Theorem \ref{th:non}.

{\bf Proof of Theorem \ref{th:non}.} We argue by contradiction.
Assume the existence of $u\gneqq 0$ such that $u\in W^{s,p}(\ren)$
and $u$ is a supersolution to problem \eqref{prob} in $\O$, then
$u>0$ in $\O$. Let $\phi\in \mathcal{C}^\infty_0(B_\eta(0))$ with
$B_\eta(0)\subset\subset \O$ and $\eta>0$ to be chosen later.

Using Picone's inequality in Lemma \ref{pic}, it follows that
$$
||\phi||^p_{X^{s,p}_0(\ren)}\ge \int_{B_\eta(0)}
\dfrac{L_{p,s}(u)}{u^{p-1}}|\phi|^p dx.$$ Thus
$$
||\phi||^p_{X^{s,p}_0(\ren)}\ge \int_{B_\eta(0)}
u^{q-(p-1)}|\phi|^p dx.$$ Since $q>q_+(p,s)$, we get the existence
of $\e>0$ such that
$$
(\g_1-\e)(q-(p-1))>ps+\rho
$$
for some $\rho>0$. Thus, using Lemma \ref{lm:estim}, we can choose
$\eta>0$ such that
$$
u^{q-(p-1)}\ge C|x|^{-ps-\rho} \mbox{   in   } B_\eta(0).
$$
Therefore
$$
||\phi||^p_{X^{s,p}_0(\ren)}\ge
C\int_{B_\eta(0)}\dfrac{|\phi|^p}{|x|^{ps+\rho}} dx,$$ which is a
contradiction with the optimality of the Hardy inequality proved
in Lemma \ref{indep}. Hence we conclude. \cqd

\end{document}